\newcolumntype{L}[1]{>{\raggedright\let\newline\\\arraybackslash\hspace{0pt}}m{#1}}
\newcolumntype{C}[1]{>{\centering\let\newline\\\arraybackslash\hspace{0pt}}m{#1}}
\newcolumntype{R}[1]{>{\raggedleft\let\newline\\\arraybackslash\hspace{0pt}}m{#1}}
\pgfplotsset{width=7cm,compat=1.8}
\tikzset{
    >=stealth',
    punkt/.style={
           rectangle,
           rounded corners,
           draw=black, very thick,
           text width=8em,
           minimum height=2em,
           text centered},
    pil/.style={
           ->,
           thick}
}
\definecolor{myred}{rgb}{0.5, 0, 0}
\definecolor{linkred}{rgb}{0.8, 0, 0}
\definecolor{mygreen}{rgb}{0, 0.5, 0}
\definecolor{citegreen}{rgb}{0, 0.8, 0}
\definecolor{yqqqqq}{rgb}{0.5019607843137255,0,0}
\definecolor{bluetable}{rgb}{0,0.2,0.5}
\setlist[itemize]{noitemsep} 
\newtheorem{Thm}{Theorem}[subsection]
\newtheorem*{Thm*}{Theorem}
\newtheorem{Lem}{Lemma}[subsection]
\newtheorem{Prop}{Proposition}[subsection]
\newtheorem{Cor}{Corollary}[subsection]
\newtheorem*{Cor*}{Corollary}
\newtheorem{Conj}{Conjecture}[subsection]
\theoremstyle{definition}
\newtheorem{Defn}{Definition}[subsection]
\newtheorem*{Remark*}{Remark}
\newtheorem{Remark}{Remark}[subsection]
\newtheorem*{Notation}{Notation}
\newtheorem{Ex}{Example}
\numberwithin{equation}{section}
   \def\MR#1{}
\def\ldb{\mathopen{\{\!\{}} \def\rdb{\mathclose{\}\!\}}}
\long\def\comment#1{}
\newcommand{\oset}[3][0ex]{%
  \mathrel{\mathop{#3}\limits^{
    \vbox to#1{\kern-2\ex@
    \hbox{$\scriptstyle#2$}\vss}}}}
\newcommand*\circled[1]{\tikz[baseline=(char.base)]{
            \node[shape=circle,draw,inner sep=2pt] (char) {#1};}}
\newcommand{\acfib}{\oset{\sim}{\twoheadrightarrow}}
\newcommand{\we}{\oset{\sim}{\to}}
\newcommand{\fibr}{\twoheadrightarrow}
\newcommand{\cofi}{{\hookrightarrow}}
\newcommand\adjunct[4]{\xymatrix@C=2pc@R=2pc{#1\ar @<1.25ex>[rr]^{#3}&\perp&#2\ar @<1.25ex>[ll]^{#4}}}
\def\Z{\mathbb{Z}}
\def\N{\mathbb{N}}
\def\D{\mathbb{D}}
\def\R{\mathbb{R}}
\def\L{{\boldsymbol{L}}}
\def\R{{\boldsymbol{R}}}
\newcommand{\lieg}{\mathfrak{g}}
\newcommand{\gl}{\mathfrak{gl}}
\newcommand{\nn}{{\underline{\mathbf{n}}}}
\def\O{\mathcal{O}}
\def\C{\mathcal{K}}
\def\B{\mathcal{B}}
\def\AA{\mathtt{A}}
\def\BB{\mathtt{B}}
\def\CC{\mathtt{C}}
\def\DD{\mathtt{D}}
\def\Act{\mathtt{Act}}
\def\NAct{\mathtt{NAct}}
\def\CoAct{\mathtt{CoAct}}
\def\CoGrp{\mathtt{CoGrp}}
\def\Grp{\mathtt{Grp}}
\def\NGrp{\mathtt{NGrp}}
\def\NAff{\mathtt{NAff}}
\def\Vect{\mathtt{Vect}}
\def\DGVect{\mathtt{DGVect}}
\def\Alg{\mathtt{Alg}}
\def\PPA{\mathtt{PPAlg}}
\def\CPA{\mathtt{CPAlg}}
\def\Aff{\mathtt{Aff}}
\def\PAff{\mathtt{PAff}}
\def\PGAff{\mathtt{PGAff}}
\def\NPAff{\mathtt{NPAff}}
\def\NPGAff{\mathtt{NPGAff}}
\def\DGAff{\mathtt{DGAff}}
\def\Mod{\mathtt{Mod}}
\newcommand{\Bimod}{\mathtt{Bimod}}
\def\cAlg{\mathtt{CommAlg}}
\def\cHopf{\mathtt{CHopf}}
\def\DGA{\mathtt{DGA}}
\def\DGLA{\mathtt{DGLA}}
\def\DGPA{\mathtt{DGPA}}
\def\CDGPA{\mathtt{CDGPA}}
\def\DGPPA{\mathtt{DGPPA}}
\def\CDGA{\mathtt{CDGA}}
\def\Cat{{\mathtt{Cat}}}
\def\Ho{{\mathtt{Ho}}}
\def\op{\mathtt{op}}
\newcommand{\Sh}{{\mathrm{Sh}}}
\newcommand{\CE}{{\mathrm{CE}}}
\newcommand{\old}{{\mathrm{old}}}
\newcommand{\dual}{\text{\textasciicaron}}
\newcommand{\pt}{{\mathrm pt}}
\newcommand{\acts}{{\, \curvearrowright\,} }
\newcommand{\Hom}{{\mathrm{Hom}}}
\newcommand{\GL}{{\mathrm{GL}}}
\newcommand{\Lie}{{\mathrm{Lie}}}
\newcommand{\End}{\mathrm{End}}
\newcommand{\Ad}{\mathrm{Ad}}
\newcommand{\Aut}{\mathrm{Aut}}
\newcommand{\Span}{\mathrm{Span}}
\newcommand{\BRST}{{\mathrm{BRST}}}
\newcommand{\NGa}{{\mathrm{N}\mathbb{G}_a}}
\newcommand{\NGm}{{\mathrm{N}\mathbb{G}_m}}
\newcommand{\DDer}{{\mathbb{D}\mathrm{er}}}
\newcommand{\Der}{\mathrm{Der}}
\newcommand{\Rep}{{\mathrm{Rep}}}
\newcommand{\HH}{{\mathrm{H}}}
\newcommand{\ev}{\mathrm{ev}}
\newcommand{\n}{{\natural}}
\newcommand{\Spec}{{\rm{Spec}}}
\newcommand{\Sp}{{\rm{Sp}}}
\newcommand{\Sym}{{\rm{Sym}}}
\newcommand{\Symg}{{\rm{Sym(\mathfrak{g})}}}
\newcommand{\id}{{\rm{Id}}}
\newcommand{\tr}{{\rm{tr}}}
\newcommand{\im}{{\rm{im}}}
\newcommand{\odd}{{\mathrm{odd}}}
\begin{document}


\title[Noncommutative derived Poisson reduction]{Noncommutative derived Poisson reduction} 
\author{Stefano D'Alesio} 
\address{Stefano D'Alesio, Departement Mathematik,
ETH Z\"urich,
8092 Z\"urich, Switzerland}
\email{stefano.dalesio@math.ethz.ch}
\date{} 

\subjclass[2010]{Primary 53D30; Secondary 14A22}

\begin{abstract}
\noindent
In this paper we propose a procedure for a noncommutative derived Poisson reduction, in the spirit of the Kontsevich-Rosenberg principle: ``a noncommutative structure of some kind on $A$ should give an analogous commutative structure on all schemes $\Rep_n(A)$''. We use double Poisson structures as noncommutative Poisson structures and noncommutative Hamiltonian spaces --- as first introduced by M. Van den Bergh --- to define (derived) zero loci of Hamiltonian actions and a noncommutative Chevalley-Eilenberg and BRST constructions, showing how we recover the corresponding commutative constructions using the representation functor. In a dedicated final short section we highlight how the categorical properties of the representation functor lead to the natural introduction of new interesting notions, such as noncommutative group schemes, group actions, or Poisson-group schemes, which could help to understand the previous results in a different light, and in future research generalise them into a broader, clearer correspondence between noncommutative and commutative equivariant geometry.
\end{abstract} 

\setcounter{tocdepth}{1}     
\maketitle
{
{
\color{bluetable}
\hypersetup{linkcolor=bluetable}
\tableofcontents
}
}


\section{Introduction}
\label{1}

A known principle in noncommutative geometry (\cite{KR}) says that every geometrically meaningful structure on an (associative, unital) algebra $A\in \Alg_k$ should induce the corresponding geometric structure on the scheme of representations $\Rep_n(A)$ in a $n$-dimensional vector space. Noncommutative Poisson geometry was worked out first by W. Crawley-Boevey who defined a Poisson structure on the character scheme $\Rep_n(A)\sslash \GL_n$ through his definition of $\HH_0$-Poisson structures (\cite{Cr}) and then by M. Van den Bergh in \cite{VdB}, who made the observation that a Poisson bracket on the full representation scheme $\Rep_n(A)$ shall be defined on the generators $\{a_{ij},b_{kl}\}$, and because it depends on four indices, it is natural to assume that it comes from a double bracket $\ldb-,-\rdb : A \otimes A \to A \otimes A $, with some properties that ensure that the induced bracket on the representation scheme is indeed a Poisson bracket. 

M. Van den Bergh also defined, using double Poisson structures, a noncommutative version of Hamiltonian spaces, essentially double Poisson algebras with a distinguished `gauge' element $\delta \in A$ that acts via the double Poisson bracket as the universal derivation on the algebra~\eqref{eq:Hamiltoniandelta}. This ensures that the corresponding action of the gauge group $\GL_n \acts \Rep_n(A)$ on the representation scheme is a Hamiltonian action, with moment map $\mu_n:\Rep_n(A) \to \gl_n^{(\ast)}$ described as the evaluation of a representation on the element $\delta$. One can define a noncommutative version of Poisson reduction then, by considering the quotient algebra $A\slash \langle \delta\rangle$ by the two sided ideal generated by $\delta$, which is a noncommutative counterpart of the zero locus: $\Rep_n(A\slash \langle \delta \rangle) = \mu_n^{-1}(0)$, so that its $\GL_n$-quotient is the Poisson reduction $\mu_n^{-1}(0)\sslash \GL_n$. These ideas appear in various forms in the first papers on noncommutative symplectic and Poisson geometry, such as \cite{K,G1,BLB,G2,CEG,EG,VdB,VdB1}.

In this paper, we elaborate an idea from V. Ginzburg (who first defined some `noncommutative BRST complexes' in \cite{G3}) and we work out in details a possible procedure to do noncommutative Poisson reduction in a derived fashion: we add variables in positive homological degrees to kill relations instead of considering quotients, and we add other variables in negative homological degrees as some sort of Chevalley-Eilenberg generators instead of considering invariants. In order to clarify our definitions involved in this `derived Poisson reduction' in the noncommutative world, it is conveninet to first recall briefly the commutative construction of derived Poisson reduction, in the style of \cite{Ca,Sa}. 

We start from a Poisson algebra $B$, a Hamiltonian group scheme action of a reductive group $G \acts X=\Spec(B)$, with (co)moment Poisson map: $\Sym(\lieg) \to B$ (the Poisson structure on $\Sym(\lieg)$ is the natural extension of the Lie bracket). We first define the derived zero locus of the corresponding map of schemes $\mu: X \to \lieg^\ast$ as the homotopy pull-back in the category of dg schemes (dually, the homotopy push-out diagram in the category of commutative dg algebras over):
\begin{equation}
\label{eq:1}
B \otimes_{\Sym(\lieg)}^\L k \quad \left( \leftrightarrow \quad X \times_{\lieg^\ast}^h \pt \right)\, .
\end{equation}
We then apply the Chevalley-Eilenberg functor $C(\lieg, - )=\Hom_k(\Sym(\lieg[1]),-)$ to the derived zero locus and obtain the classical BRST complex
\begin{equation}
\label{eq:2}
C(\lieg, B\otimes_{\Sym(\lieg)}^\L k) \quad \left( \simeq C(\lieg,B)\otimes^\L_{C(\lieg,\Sym(\lieg))} C(\lieg,k) \quad \leftrightarrow \quad [X/G] \times^h_{[\lieg^\ast/G]} [\pt/G] \right)\, ,
\end{equation}
as a derived model for the algebra of functions on the reduced space $\mu^{-1}(0)\sslash G$.


\subsection{Summary of results}
\label{1.1}

In the noncommutative context we consider a dg algebra $A \in \DGA_S$ over $S$, a finite dimensional algebra of orthogonal idempotents $S=kI$ (path algebra of a quiver with vertex set $I$ and no arrows). When we consider representations over $S$ in a vector space $V$ we need to specify a dimension vector $\nn\in \N^I$ (of total dimension = $\dim V$) or in other words fix the representation $\rho_\nn :S\to \End(V)$ sending the $i$-th orthogonal idempotent to the corresponding one according to the decomposition given by the dimension vector $\nn$. The representation functor has the form:
\begin{equation}
\label{eq:3}
\begin{aligned}
(-)_\nn : &\,\,\DGA_S \to \CDGA_k\, \\
& A \longmapsto A_\nn = \O \Rep_\nn(A)\, ,
\end{aligned}
\end{equation}
where $\Rep_\nn(A)$ denotes the scheme of representations of $A$ that agree with $\rho_\nn$ through the structure map $S \to A$. When $A$ is equipped with a double Poisson algebra structure, there is an induced Poisson structure on $A_\nn$ defined on its generators by:
\begin{equation}
\label{eq:4}
\{ a_{ij} ,b_{kl} \} = \ldb a,b\rdb'_{kj} \ldb a,b\rdb''_{il}\, .
\end{equation}
In other words the representation functor enriches to a functor between the categories of (dg) double Poisson algebras, and commutative (dg) Poisson algebras:
\begin{equation}
\label{eq:5}
(-)_\nn: \DGPPA_S \to \CDGPA_k\, .
\end{equation}
Let now $A \in \PPA_S$ be a (ungraded) double Poisson algebra. The representation scheme $\Rep_\nn(A)$ comes with a natural action $G_S \acts \Rep_\nn(A)$ of the gauge group of $S$-preserving automorphisms $G_S \subset \GL(V)$, which in this case is just a product of general linear groups $\GL_\nn:=\prod_i \GL_{n_i}$. The corresponding Lie algebra $\lieg=\gl_\nn$ can be obtained as the representation scheme of the path algebra of a quiver with vertex set $I$ and one loop $t_i$ on each vertex, $T_S(L)$ ($L$ is the linear span of the loops). As it turns out there is a natural double Poisson structure on $T_S(L)$ such that the induced Poisson structure on
\begin{equation}
\label{eq:6}
(T_S(L))_\nn = \O (\gl_\nn)= \Sym(\gl_\nn^\ast) \cong \Sym(\gl_\nn)
\end{equation}
is the standard extension of the Lie algebra structure on $\gl_\nn$ (where in the last identification we used the canonical isomorphism induced by the trace). Once we know this, we see that the action $\GL_\nn \acts \Rep_\nn(A)$ is Hamiltonian exactly when $A$ comes with a morphism of double Poisson algebras $T_S(L)\to A$, with the additional property that the image of a loop $t_i \mapsto \delta_i $ has Poisson bracket with any $a \in A$:
\begin{equation}
\label{eq:6*}
\ldb \delta_i ,a\rdb = ae_i\otimes e_i - e_i \otimes e_i a \,,
\end{equation}
(the $i$-th component of the universal derivation). In other words it is possible to define of a category of noncommutative Hamiltonian spaces as a full subcategory (objects with structure map having the property~\eqref{eq:6*}) of the under category $\PPA_{T_S(L)}^H\subset \DGPPA_{T_S(L)}$ in such a way that:
\begin{Thm*}[\S\ref{4.2}]
The representation functor enriches to a functor between the category of noncommutative Hamiltonian spaces and commutative Hamiltonian $\gl_\nn$-spaces:
\begin{equation}
\label{eq:7}
\begin{tikzcd}
  \PPA_{T_S(L)}^H  \arrow{r}{(-)_\nn} \arrow{d}{}
    & \CPA_{\Sym(\gl_\nn)}^H \arrow{d}{}  \\
\DGA_{T_S(L)} \arrow{r}{(-)_\nn}
&\CDGA_{\Sym(\gl_\nn)}
 \end{tikzcd}
 \end{equation}
where the vertical functors forget the Poisson structures and view the algebras as dg algebras placed in degree zero. Moreover the representation functor at the level $(-)_\nn: \DGA_{T_S(L)} \to \CDGA_{\Sym(\gl_\nn)}$ is cocontinuous (preserves small colimits), so in particular it preserves coproducts.
\end{Thm*}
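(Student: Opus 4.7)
The plan is to build the Hamiltonian enrichment by transporting the structure morphism $T_S(L)\to A$ through the already established Poisson functor~\eqref{eq:5}, verify the classical moment map identity on generators, and finally deduce cocontinuity from an explicit right adjoint.

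First I would apply the functor~\eqref{eq:5} to the structure morphism $T_S(L)\to A$ of any object $A\in\PPA_{T_S(L)}^H$. Combined with the identification~\eqref{eq:6}, this produces a morphism $\Sym(\gl_\nn)\to A_\nn$ of commutative Poisson algebras, realising $A_\nn$ as an object over $\Sym(\gl_\nn)$. The commutativity of the square~\eqref{eq:7} is tautological, because both vertical functors merely forget the (double) Poisson structure, which is defined on generators by~\eqref{eq:4} and is therefore unaffected by the forgetful step.

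The substantive point is to verify that the morphism above is a genuine (co)moment map, i.e.\ that $\{\mu_\nn^\ast(X),f\}=X\cdot f$ for every $X\in\gl_\nn$, $f\in A_\nn$, with respect to the conjugation action of $\gl_\nn$ on $\Rep_\nn(A)$. By bilinearity and the derivation property this reduces to a check on generators: substituting the noncommutative Hamiltonian condition~\eqref{eq:6*} into~\eqref{eq:4} produces an explicit expression for $\{(\delta_i)_{jk},a_{lm}\}$ in terms of the idempotents $e_i$ and of the matrix entries of $a$, which then has to be matched with the matrix commutator formula describing the infinitesimal action of the elementary matrix $E^i_{jk}$ on $a_{lm}$, under the identification $\gl_\nn^\ast\cong\gl_\nn$ via trace. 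This is the main obstacle, since it requires careful bookkeeping of the block decomposition induced by $\nn$ and of the idempotents coming from $S$; once the combinatorial setup is in place the identity becomes mechanical.

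For the cocontinuity statement I would exhibit an explicit right adjoint to $(-)_\nn:\DGA_{T_S(L)}\to\CDGA_{\Sym(\gl_\nn)}$. The classical construction sends a commutative dg algebra $B$ to the $S$-algebra $M_\nn(B):=\End(V)\otimes_k B$, where $S$ acts through $\rho_\nn:S\to\End(V)$; by the very definition of $\Rep_\nn(A)$ as a functor of points one obtains a natural isomorphism
\begin{equation*}
\Hom_{\CDGA_k}(A_\nn,B)\;\cong\;\Hom_{\DGA_S}(A,M_\nn(B)) .
\end{equation*}
The relative version over $T_S(L)$ and $\Sym(\gl_\nn)$ follows by restricting to morphisms compatible with the distinguished structure maps, and cocontinuity (in particular preservation of coproducts) is then automatic for any left adjoint.
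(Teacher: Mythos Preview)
Your proposal is correct. For the Hamiltonian enrichment you follow essentially the same path as the paper: apply the Poisson representation functor to the structure map $T_S(L)\to A$ and then check that the Hamiltonian condition~\eqref{eq:6*} yields the classical moment-map identity for the conjugation action. The paper disposes of this last step by citing \cite[Proposition~7.11.1]{VdB} rather than redoing the generator computation, so what you call ``the main obstacle'' is in fact already in the literature; your direct verification is perfectly fine but not needed.

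Where you genuinely diverge is in the cocontinuity argument. The paper observes that $(-)_\nn:\DGA_{T_S(L)}\to\CDGA_{\Sym(\gl_\nn)}$ is simply the under-category version of the cocontinuous functor $(-)_\nn:\DGA_S\to\CDGA_k$, and that colimits in an under category are computed in the ambient category; cocontinuity then follows immediately with no further input. You instead exhibit the explicit right adjoint $B\mapsto M_\nn(B)=\End(V)\otimes_k B$ at the base level and lift it to the under categories via the unit $T_S(L)\to M_\nn(\Sym(\gl_\nn))$. Both routes are short and valid: yours is more concrete and recovers the right adjoint (which is useful in its own right, cf.\ \cite{BKR,BFR}), while the paper's is the minimal categorical observation that avoids constructing anything.
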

Once this is clear it is natural to give the necessary definitions and a procedure to do noncommutative derived Poisson reduction, simply by substituting the constructions in~\eqref{eq:1} and~\eqref{eq:2} by the corresponding noncommutative ones. 

The noncommutative analogue of the zero locus of the Poisson moment map is the coproduct over $T_S(L)$ of the Hamiltonian algebra $T_S(L) \to A$ and $S$ (viewed as a $T_S(L)$-algebra via the standard projection that sends $L$ to zero):
\begin{equation}
\label{eq:8}
A/\langle L \rangle = A \amalg_{T_S(L)} S  \xmapsto{(-)_\nn}A_\nn \otimes_{\Sym(\gl_\nn)} k \, .
\end{equation}
We can therefore define a noncommutative derived zero locus substituting the coproduct with the derived coproduct, in such a way that we recover the classical derived zero locus:
\begin{Thm*}[\S\ref{4.2}]
The following model of noncommutative derived zero locus corresponds, under the representation functor, to the classical derived zero locus - the Koszul complex: 
\begin{equation}
\label{eq:8}
A \amalg_{T_S(L)}^\L S \cong A \amalg_S T_S(L[1])  \xmapsto{(-)_\nn} A_\nn \otimes^\L_{\Sym(\gl_\nn)} k \cong A_\nn \otimes_k \Sym(\gl_\nn[1])\, .
\end{equation}
\end{Thm*}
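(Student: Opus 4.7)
The plan is to build an explicit cofibrant replacement of $S$ in $\DGA_{T_S(L)}$, run the representation functor through it using cocontinuity, and match the result with the classical Koszul resolution.

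First I would introduce the noncommutative Koszul-type resolution of $S$. Consider the free graded $T_S(L)$-algebra $T_S(L\oplus L[1])$ equipped with the differential $d$ determined on generators by $d|_L = 0$ and $d: L[1]\xrightarrow{\sim} L$ (the canonical identification shifted down by one). The augmentation $T_S(L\oplus L[1])\to S$ sending all generators to zero is a quasi-isomorphism: since the differential is acyclic on the span of generators, a standard filtration-by-word-length argument (identical to the one for the commutative Koszul complex but valid for free associative algebras as well) shows the cohomology collapses to $S$. Moreover, as a free $T_S(L)$-algebra on a cofibrant $S$-bimodule placed in positive degree, $T_S(L\oplus L[1])$ is cofibrant over $T_S(L)$ in the standard model structure on $\DGA_{T_S(L)}$.

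Next I would compute the derived coproduct. Using the decomposition $T_S(L\oplus L[1])\cong T_S(L)\amalg_S T_S(L[1])$ of free algebras and the cofibrant replacement above, associativity of the coproduct gives
\begin{equation*}
A\amalg_{T_S(L)}^{\boldsymbol{L}} S\;\simeq\;A\amalg_{T_S(L)}\bigl(T_S(L)\amalg_S T_S(L[1])\bigr)\;\cong\;A\amalg_S T_S(L[1]),
\end{equation*}
proving the first isomorphism in the statement.

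Now I would apply $(-)_\nn$. By the previous theorem the representation functor $\DGA_{T_S(L)}\to \CDGA_{\Sym(\gl_\nn)}$ is cocontinuous, so it preserves the coproduct above. One has $S_\nn=k$ by definition of $\rho_\nn$, and applying the same computation that yields $(T_S(L))_\nn\cong \Sym(\gl_\nn)$ to the shifted generators gives $(T_S(L[1]))_\nn\cong \Sym(\gl_\nn[1])$, with the differential on $T_S(L\oplus L[1])$ mapping exactly to the standard Koszul differential on $\Sym(\gl_\nn\oplus\gl_\nn[1])$. Hence
\begin{equation*}
\bigl(A\amalg_S T_S(L[1])\bigr)_\nn\;\cong\;A_\nn\otimes_k \Sym(\gl_\nn[1]).
\end{equation*}
Finally, the classical Koszul resolution $\Sym(\gl_\nn\oplus\gl_\nn[1])\xrightarrow{\sim} k$ is a cofibrant replacement of $k$ as a $\Sym(\gl_\nn)$-algebra, so $A_\nn\otimes^{\boldsymbol{L}}_{\Sym(\gl_\nn)} k\simeq A_\nn\otimes_k \Sym(\gl_\nn[1])$, closing the square.

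The main obstacle I anticipate is the compatibility check in step three: one must verify that the chosen noncommutative cofibrant replacement $T_S(L\oplus L[1])$ is mapped by $(-)_\nn$ precisely onto the commutative Koszul resolution, with matching differentials. This hinges on the functoriality of $(-)_\nn$ applied to free generators in degree $1$, which reduces to the explicit formula for $(T_S(L))_\nn$ already established, simply repeated on the shifted copy; however, one must check that the induced differential on the commutative side is the Koszul-contraction with the moment map components (the images of the $\delta_i$), rather than something else. This is where the Hamiltonian axiom~\eqref{eq:6*} enters decisively and needs to be traced carefully through the identification $(T_S(L))_\nn\cong \Sym(\gl_\nn)$ given by the trace pairing.
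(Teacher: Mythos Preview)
Your proposal is correct and follows essentially the same route as the paper: build the cofibrant replacement $T_S(L\oplus L[1])\acfib S$, take the coproduct over $T_S(L)$, and push through the representation functor using cocontinuity. The paper's acyclicity argument (Lemma~\ref{eq:tslcofi}) uses an explicit contracting homotopy $h$ with $dh+hd=l$ the word-length operator, which is the concrete realisation of your ``filtration-by-word-length'' sketch.

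One small correction to your final paragraph: the Hamiltonian axiom~\eqref{eq:6*} is \emph{not} needed to match the differentials. The Shafarevich differential is $d\vartheta_i=\delta_i$, and under $(-)_\nn$ the generators $(\vartheta_i)_{rs}$ go to a basis of $\gl_\nn[1]$ while $(\delta_i)_{rs}$ go to the components of the moment map $\Sym(\gl_\nn)\to A_\nn$ simply because that map is induced by the structure morphism $T_S(L)\to A$. This is purely about how the representation functor treats generators and differentials; the Hamiltonian condition on the double bracket of $\delta_i$ plays no role at this step (it enters later, when one wants the induced $\gl_\nn$-action on $A_\nn$ to be Hamiltonian).
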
 
We denote this specific model for the noncommutative derived zero locus by $\Sh(A) = A\amalg_S T_S(L[1])$, because it is some sort of a generalised `Shafarevich complex'. The next step for doing Poisson reduction is a noncommutative Chevalley-Eilenberg construction, which we define to be the following coproduct (with twisted differential, as in the commutative case --- details in \S\ref{3.4}):
\begin{equation}
\label{eq:9}
\begin{aligned}
\CE   : &\,\, \DGPPA_{T_S(L)} \to \DGA_{T_S(L \oplus L^\ast[-1])}\\
& A \xmapsto{\quad \quad} A \amalg_{T_S(L)} T_S(L\oplus L^\ast[-1])\, .
\end{aligned}
\end{equation}
The reader who is wondering why we momentaneously forget the Poisson structure is encouraged to read the details of this construction in \S\ref{3.4}, especially Remark~\ref{rem:CE}. When we start from a noncommutative Hamiltonian space $A \in \PPA_{T_S(L)}^H$ and apply the Chevalley-Eilenberg construction to the Shafarevich complex we obtain a noncommutative version of the BRST complex:
\begin{equation}
\label{eq:10}
\BRST(A) := \CE(\Sh(A)) \cong A \amalg_S T_S( L[1] \oplus L^\ast[-1])\, ,
\end{equation}
now equipped with the natural double Poisson structure which comes from $A$ and the natural pairing between $L, L^\ast$. We obtain the following result:
\begin{Thm*}[\S\ref{4.3}]
There is a commutative diagram between the noncommutative and the commutative BRST construction:
\begin{equation}
\label{eq:11}
\begin{tikzcd}
  \PPA_{T_S(L)}^H  \arrow{r}{(-)_\nn} \arrow{d}{\rm{BRST}}
    & \CPA_{\Sym(\gl_\nn)}^H \arrow{d}{\rm{BRST}}  \\
\DGPPA_{T_S(L \oplus L[1] \oplus L^\ast[-1])} \arrow{r}{(-)_\nn} 
&\CDGPA_{\Sym(\gl_\nn \oplus \gl_\nn[1] \oplus \gl_\nn^\ast[-1])}
 \end{tikzcd}
\end{equation}
\end{Thm*}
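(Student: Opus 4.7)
The plan is to decompose the BRST functor as the composite of the Shafarevich (derived zero locus) construction followed by the Chevalley-Eilenberg construction, and to verify commutativity of the square in two stages, leveraging the previous theorems in \S\ref{4.2}.

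First I would rewrite the noncommutative BRST complex using the isomorphism $\BRST(A) \cong A \amalg_S T_S(L[1]\oplus L^\ast[-1])$ and apply the cocontinuity of $(-)_\nn:\DGA_S\to \CDGA_k$, which was established in the theorem of \S\ref{4.2}. This reduces the computation of $(\BRST(A))_\nn$ to
\begin{equation*}
(A)_\nn \otimes_k \bigl(T_S(L[1])\bigr)_\nn \otimes_k \bigl(T_S(L^\ast[-1])\bigr)_\nn,
\end{equation*}
and the same theorem, combined with the trace identification $\gl_\nn\cong\gl_\nn^\ast$ used in~\eqref{eq:6}, identifies each factor with $A_\nn$, $\Sym(\gl_\nn[1])$ and $\Sym(\gl_\nn^\ast[-1])$ respectively. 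This matches the underlying graded algebra of the classical BRST complex $\CE(\gl_\nn, A_\nn \otimes^\L_{\Sym(\gl_\nn)} k)$, again using the identification of the derived zero locus $A_\nn \otimes^\L_{\Sym(\gl_\nn)}k \cong A_\nn\otimes_k \Sym(\gl_\nn[1])$ from the previous theorem.

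Next I would verify that the differentials agree. The noncommutative BRST differential has two contributions: the Koszul-type differential on $T_S(L[1])$ coming from the Hamiltonian map $T_S(L)\to A$ (i.e.\ from $t_i\mapsto \delta_i$), and the Chevalley-Eilenberg twist on $T_S(L^\ast[-1])$ built from the double Poisson bracket with the $\delta_i$'s. Under $(-)_\nn$, the first summand becomes the Koszul differential determined by the components of the moment map $\mu_\nn$, and the second becomes the CE differential built out of the Poisson action on $A_\nn$ together with the Lie bracket on $\gl_\nn^\ast$. Both matchings follow by evaluating the generators and using~\eqref{eq:4} together with the Hamiltonian condition~\eqref{eq:6*}, which says precisely that on representations $\delta_i$ acts by the infinitesimal gauge action.

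Finally I would check compatibility of Poisson structures. The double Poisson bracket on $\BRST(A)$ is assembled from the one on $A$ and from the canonical pairing between $L[1]$ and $L^\ast[-1]$ transported to the free product. The first contribution transfers to the Poisson structure on $A_\nn$ via formula~\eqref{eq:4}; the second, under the trace identification, becomes the canonical pairing between $\gl_\nn[1]$ and $\gl_\nn^\ast[-1]$ that defines the BRST Poisson bracket on the classical side. Here the main obstacle, and the step I expect to require the most care, is tracking the signs and the index contractions introduced by the shift in degree and by the trace pairing simultaneously on both the differential and the bracket; once these bookkeeping checks pass, commutativity of~\eqref{eq:11} follows by combining the two computations above with the horizontal arrow being cocontinuous and sending Hamiltonian spaces to Hamiltonian spaces.
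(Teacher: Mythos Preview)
Your proposal is correct and follows essentially the same route as the paper: decompose $\BRST = \CE\circ\Sh$, use cocontinuity of $(-)_\nn$ to identify the underlying graded algebra, then match the differentials and Poisson structures on generators. The paper organises this slightly differently by first isolating the Chevalley--Eilenberg step as a separate lemma (Lemma~\ref{thm:cerep}) and then invoking Lemma~\ref{lem:shakos} for the Shafarevich/Koszul identification, but the content is the same as your direct computation.
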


We conclude this introduction by providing a `dictionary', a summary of the above-mentioned noncommutative constructions and their commutative counterparts.

\begin{figure}[H]
\caption{Dictionary between noncommutative and commutative geometry ($\lieg=\gl_\nn$ in the table).}
\label{fig:dic}
\begin{center}
    \begin{tabular}{| C{2.5cm} | | C{4.7cm}  | C{4.7cm} |}
    \hline
    \emph{\underline{`Dictionary'}} & Noncommutative geometry$^{(\op)}$ & Commutative geometry$^{(\op)}$ \\ \hline
    Base scheme & $S$ & $k$   \\ \hline
    Derived affine schemes & $\DGA_S$ & $\CDGA_k$   \\ \hline
    Derived Poisson schemes  & $\DGPPA_S $ & $\CDGPA_k$ \\ \hline
    Gauge algebra & $T_S(L)$ & $\Symg$ \\ \hline
    Hamiltonian spaces & $\PPA_{T_S(L)}^H\subset \DGPPA_{T_S(L)}$ & $\CPA_{\Symg}^H\subset\CDGPA_{\Symg}$ \\ \hline
    Derived zero locus (Koszul complex) & $A \amalg_{T_S(L)}^\L S \cong A \amalg_S T_S(L[1])$ &$B\otimes_{\Symg}^\L k \cong B\otimes_k \Sym(\lieg[1])$ \\\hline
   Quotient stack (Chevalley-Eilenberg complex) & $T_S(L^\ast[-1]) \amalg_{S} - $& $ \Hom_k(\Sym(\lieg[1]),-)$ \\\hline
   Derived Poisson reduction (BRST complex) & $T_S(L^\ast[-1]) \amalg_{S} \left(A \amalg_{T_S(L)}^\L S  \right) $& $ \Hom_k \left(\Sym(\lieg[1]),B\otimes_{\Symg}^\L k\right)$ \\\hline
    \end{tabular}
\end{center}
\end{figure}

After the theoretical part, we show the details of noncommutative derived Poisson reduction for some concrete well-known algebras such as cotangent bundles of smooth algebras, and in particular path algebras of doubled quivers, obtaining a BRST model for Nakajima-type quiver varieties. We give a proof of the somewhat classical result that the (commutative) BRST homology is the tensor product of the $\GL_\nn$-invariant homology of the corresponding Koszul complex with the Lie algebra (co)homology of $\gl_\nn$:

\begin{Thm*}[\S\ref{5.1}]
Let $A$ be a noncommutative Hamiltonian space and, for a fixed dimension $\nn$, let $\B_\nn(A),\C_\nn(A)$ the associated (commutative) BRST and Koszul complexes, respectively. Then we have
\begin{equation}
\HH_\bullet (\B_\nn(A)) \cong \HH_\bullet(\C_\nn(A))^{\GL_\nn} \otimes_k \HH^{-\bullet} (\gl_\nn,k)\, .
\end{equation}
\end{Thm*}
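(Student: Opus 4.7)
By the theorem of \S\ref{4.3}, the commutative BRST complex admits the Chevalley--Eilenberg presentation
\[
\B_\nn(A) \;\cong\; C\bigl(\gl_\nn,\,\C_\nn(A)\bigr) \;=\; \C_\nn(A)\otimes_k \Sym(\gl_\nn^\ast[-1]),
\]
with total differential $d_K + d_{\CE}$, where $d_K$ is the internal Koszul differential on $\C_\nn(A)$ and $d_{\CE}$ incorporates both the Lie bracket on $\gl_\nn$ and the Hamiltonian action on $\C_\nn(A)$. Equivariance of the moment map ensures that $d_K$ commutes with the rational $\GL_\nn$-action, so $\C_\nn(A)$ is a rational $\GL_\nn$-equivariant complex.

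Reductivity of $\GL_\nn$ then lets me split $\C_\nn(A) = \C_\nn(A)^{\GL_\nn} \oplus \C'$ as a direct sum of $\GL_\nn$-equivariant subcomplexes (the splitting is $d_K$-stable by equivariance), with $\C'$ collecting the non-trivial isotypic components in every degree. Correspondingly,
\[
\B_\nn(A) \;=\; C\bigl(\gl_\nn,\,\C_\nn(A)^{\GL_\nn}\bigr)\;\oplus\; C\bigl(\gl_\nn,\,\C'\bigr).
\]
On the invariant summand the $\gl_\nn$-action is trivial, so $d_{\CE}$ reduces to the standard trivial-coefficients Chevalley--Eilenberg differential acting only on the $\Sym(\gl_\nn^\ast[-1])$ factor; the complex becomes an honest tensor product $\C_\nn(A)^{\GL_\nn}\otimes_k C(\gl_\nn,k)$, whose homology is
\[
\HH_\bullet(\C_\nn(A))^{\GL_\nn}\otimes_k \HH^{-\bullet}(\gl_\nn,k)
\]
by Künneth together with the exactness of $(-)^{\GL_\nn}$ (which uses reductivity once more). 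This is precisely the right-hand side of the claim.

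It therefore suffices to prove that $C(\gl_\nn,\C')$ is acyclic. Filtering by the symmetric degree in $\Sym(\gl_\nn^\ast[-1])$ produces a convergent spectral sequence with $E_2^{p,q} = \HH^p\bigl(\gl_\nn,\,\HH_q(\C')\bigr)$, and since the isotypic decomposition descends to homology, $\HH_q(\C')$ still has no trivial $\GL_\nn$-summand. I then invoke the Whitehead-type vanishing $\HH^\bullet(\gl_\nn,V) = 0$ for every non-trivial irreducible rational $\gl_\nn$-representation $V$: the Casimir argument takes care of the semisimple part $\slin_\nn$, and the Künneth splitting $\gl_\nn = \slin_\nn \oplus k\cdot\Id$ reduces the remaining one-dimensional characters of the centre to an elementary $1$-dimensional computation. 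This forces $E_2 = 0$, so $C(\gl_\nn,\C')$ is acyclic and the claim follows. The main technical point is precisely this vanishing for the reductive (rather than semisimple) algebra $\gl_\nn$; once it is in place the rest is a standard reductivity/Künneth manoeuvre.
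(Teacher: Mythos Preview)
Your argument is correct and rests on the same ingredients as the paper's proof of Theorem~\ref{thm:CEhom}: the identification $\B_\nn(A)\cong C(\gl_\nn,\C_\nn(A))$, reductivity of $\GL_\nn$, and the Whitehead-type vanishing of $\HH^\bullet(\gl_\nn,V)$ for non-trivial irreducibles. The organisation is different, however. The paper runs a single spectral sequence on the whole Chevalley--Eilenberg complex (filtering by the number of $\eta$'s), identifies $E^2\cong \HH_\bullet(\C_\nn(A))^{\GL_\nn}\otimes \HH^{-\bullet}(\gl_\nn,k)$ using reductivity, and then asserts that $d^{(2)}=0$ ``by (bi)degree inspection'' so that the sequence collapses. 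You instead split $\C_\nn(A)=\C_\nn(A)^{\GL_\nn}\oplus \C'$ \emph{before} running any spectral sequence, treat the invariant summand directly as a tensor product of complexes via K\"unneth, and only invoke the filtration on $C(\gl_\nn,\C')$, where $E_2$ already vanishes. This reorganisation is arguably cleaner: the degeneration at $E^2$ becomes automatic (nothing survives on the non-invariant side, and the invariant side never needed a spectral sequence), whereas the paper's ``bidegree inspection'' is not spelled out in detail. Your explicit treatment of the Whitehead vanishing for the reductive algebra $\gl_\nn$ via the decomposition into its semisimple part and centre also makes precise a point the paper takes for granted in step~(3).
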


Then we show a couple of examples of path algebras of quivers such as the quiver with one vertex and $g$ loops, corresponding to a Lie algebra version of the character variety of a Riemann surface of genus $g$, and in particular the commuting scheme for $g=1$. Finally we pick two more examples different from path algebras of a quiver, which correspond to a Lie group-Lie algebra and a Lie group-Lie group version of the commuting scheme (which would be the case Lie algebra-Lie algebra).

\subsection{Layout of the paper and instructions for the reader} 
\label{1.2}
\S\ref{2} explains the theory of double Poisson algebras (as introduced by M. Van den Bergh), with a particular emphasis on the differential graded case and a categorical meaning of these structures. In \S\ref{3} we formalise the constructions contained in the noncommutative side of the `dictionary' (Figure~\ref{fig:dic}) together with a few structural results that these definitions are indeed well-posed and well-behaved. In \S\ref{4} we prove our main results, showing in which sense the noncommutative side of the `dictionary' corresponds to the commutative side. In \S\ref{5} we discuss the large class of examples of cotangent bundles, in particular path algebras of doubled quivers (together with a computation of the commutative BRST homology in this context) and various versions of the commuting scheme. In the last, short \S\ref{6} we introduce the notions of noncommutative analogues of more general group schemes, group actions and Poisson-group schemes, which are a possible direction in which we can generalise the results of the paper.

\subsection*{Notations and conventions} $k$ denotes an algebraically closed field of characteristic zero. We denote categories by the standard monospace font: $\Vect_k$, $\Alg_k$, $\DGA_k$, \dots We always work in chain complexes, so for us a differential graded object (algebra, vector space, \dots) has differential of degree $-1$, differential graded is often shortened by ``dg'', and commutative differential graded by ``cdg''. For a category $\CC$ and an object $S \in \CC$ we denote by $S \downarrow{\CC}$ the under category (in the case of dg algebras we denote this also by $S\downarrow \DGA_k=\DGA_S$). The coproduct in the under category $S\downarrow \CC$ is the push-out in $\CC$ of diagrams $\bullet \leftarrow S \rightarrow \bullet $, and denoted by $-\amalg_S - $. Left and right derived functors of a functor between model category $F: \CC \to \DD$, when they exist, are denoted by $\L(F), \R(F)$ and by them we mean the total left/right derived functors between the homotopy categories. In the case of schemes, we denote the derived pull-back also by the more traditional symbol: $X\times_Z^\R Y=X \times_Z^h Y$ (`h' stands for `homotopy' pull-back).

\subsection*{Acknowledgments} 
I want to thank my advisor G. Felder for our numerous discussions, his useful questions and for having shared with me his expertise in the BV-BFV-BRST formalism. I also want to thank Y. Berest for having first explained to me what is noncommutative Poisson geometry and for having asked some questions that raised my interest for the subject, and F. Naef for related discussions.

This work was supported by the National Centre of Competence in
Research SwissMAP --The Mathematics of Physics-- of the Swiss National
Science Foundation.


\section{Double Poisson algebras}
\label{2}

In~\cite{VdB} M. Van den Bergh introduced double Poisson brackets as the main candidates for noncommutative Poisson structures according to the Kontsevich-Rosenberg principle (indeed if one wants a Poisson bracket on representation schemes, needs a bracket with values in $A \otimes A$). In this Section we recall the main definitions and results from M. Van den Bergh in order to set up the notation and adapt them slightly to better suit our purposes. Mainly we discuss the differential graded version of double Poisson brackets (which is already sketched in~\cite{VdB}, and studied in~\cite{FH} in relation to cyclic $A_\infty$-algebras) and by doing so we consider the category of dg double Poisson algebras, which is the natural category in which we should do derived Poisson reduction. Finally we introduce a special class of dg double Poisson algebras whose differential is given by the induced single Poisson bracket with a distinguished (double) Maurer-Cartan element which we call ``noncommutative charge'', because the induced differential on representation schemes is obtained as the Poisson bracket by the trace of this element. The expert reader can skip this Section entirely, or just come back when some notion from this Section is used in the following part of the paper.


\subsection{Graded objects}
\label{2.1}

Let $\DGA_k$ be the category of differential graded algebras over $k$ (the differentials have degree $-1$ in our conventions). We recall that for a differential graded algebra $A$, the tensor product $A\otimes A$ has two natural graded bimodule structures over $A$:
\begin{itemize}
\item[(outer)] $a \cdot (u\otimes v) \cdot b :=au \otimes vb$ , 
\item[(inner)] $a \ast (u\otimes v) \ast b:= (-1)^{|a||b|+|a||u|+|b||v|} ub \otimes av$ .
\end{itemize}

The two structures commute with each other (with a sign):
\begin{equation}
\label{eq:out/inn}
\begin{cases}
a \cdot (a' \ast (u\otimes v) \ast b') \cdot b= (-1)^{|a||a'|+ |b||b'|} a' \ast (a \cdot (u \otimes v) \cdot b) \ast b'\,,\\
a' \ast (a \cdot (u \otimes v) \cdot b) \ast b' = (-1)^{|a||a'|+ |b||b'|} a\cdot (a' \ast (u\otimes v) \ast b') \cdot b\,.
\end{cases}
\end{equation}

For each $n=1,2,\dots $ and each permutation $\sigma \in \Sigma_n$ we denote by $\tau_\sigma :A^{\otimes n} \to A^{\otimes n}$ the isomorphism:
\[
\tau_\sigma(a_1\otimes \cdots \otimes a_n) = (-1)^s a_{\sigma^{-1}(1)}\otimes \cdots \otimes a_{\sigma^{-1}(n) } \,,
\]
where $s$ is the sign that counts all the swappings involved in $\sigma$:
\[
s = \sum\limits_{\substack{i < j:\\ \sigma^{-1}(i) >\sigma^{-1}(j)}} |a_{\sigma^{-1}(i)}||a_{\sigma^{-1}(j)}|
\, .\]
The permutation $(-)^\circ:= \tau_{(12)}:A^{\otimes 2} \to A^{\otimes 2}$ intertwines the two bimodule structure:
\begin{equation}
\label{eq:swapbim}
\begin{cases}
(a \cdot (u\otimes v) \cdot b)^\circ = a\ast ( (u \otimes v)^\circ )\ast b\,,\\
( a\ast  (u \otimes v) \ast b )^\circ = a \cdot ( (u\otimes v)^\circ) \cdot b \,.
\end{cases}
\end{equation}

For a graded object $A$ (a dg algebra, a graded vector space, \dots) and an integer $m \in \Z$ we denote its $m$-shifted object by $A[m]$:
\begin{equation}
\label{eq:shift}
(A[m])_i := A_{i-m} \, ,
\end{equation}
so that if $A$ is concentrated in degree zero, then $A[m]$ is concentrated in degree $m$, and a homogeneous map $A \to A[m]$ is a map $A_i \to A_{i-m}$ (shifted of degree $-m$).


\subsection{Multi-brackets on differential graded algebras}
\label{2.2}

\begin{Defn} 
\label{def:nbracket}
An $n$\textit{-bracket} on a differential graded algebra $A\in \DGA_k$ is a map $\ldb-,\dots,- \rdb:A^{\otimes n} \to A^{\otimes n}$ (of degree $0$) with the following properties:
\begin{enumerate}
\item (derivation) The map $\ldb a_1,\dots,a_{n-1}, -\rdb : A \to A^{\otimes n}$ is a graded derivation (for the outer bimodule structure on $A^{\otimes n}$) of degree $p:=|a_1|+\dots + |a_{n-1}|$, that is 
\begin{equation}
\label{eq:(1)}
\ldb a_1,\dots,a_{n-1}, bc\rdb = \ldb a_1,\dots,a_{n-1}, b\rdb \cdot c + (-1)^{|b|p} b\cdot \ldb a_1,\dots,a_{n-1}, c\rdb \,,
\end{equation}
\item (cyclic invariance)
\begin{equation}
\label{eq:(1)}
\ldb -, \dots ,- \rdb = (-1)^{n+1} \tau_{(12\dots n)} \circ \ldb -,\dots,-\rdb \circ \tau_{(12\dots n)}^{-1}\, ,
\end{equation}
\item (compatibility between bracket and differential) 
\begin{equation}
\label{eq:(1)}
d \circ \ldb -,\dots,-\rdb = \ldb -,\dots,-\rdb \circ d \, .
\end{equation}
\end{enumerate}
\end{Defn}
\begin{Defn}
\label{def:slinear}
If $A\in \DGA_S:=S\downarrow \DGA_k$ is a dg algebra over $S$, then an $n$-bracket is called $S$\textit{-linear} if it vanishes when its last argument is in the image of $S$ under the structure map $S\to A$ (and consequently, by the cyclic invariance, if any argument is in the image of $S$). 
\end{Defn}
\begin{Remark}
\label{rem:shifted}
There is a more general notion of $(-m)$-shifted $n$-bracket which is a bracket $\ldb -,\dots,-\rdb: A^{\otimes n} \to A^{\otimes n}[m]$ which satisfies the corresponding shifted properties analogous to (1),(2),(3) (see \cite{FH}). All the results in this Section hold also for $m$-shifted brackets, however, in this paper we do not need these structures, therefore we discuss only the $0$-shifted (homogeneous) case, which shortens the length of the signs involved in the formulas.
\end{Remark}

In the particular cases $n=2$ and $n=3$ such a structure is called, respectively, a double or a triple bracket. A double bracket is a map that satisfies 
\begin{enumerate}
\item $\ldb a,bc \rdb = \ldb a,b\rdb \cdot c + (-1)^{|a||b|} b \cdot \ldb a,c\rdb $ ,
\item $ \ldb a,b\rdb = - (-1)^{|a||b|} \ldb b,a \rdb^\circ$ ,
\item $d \ldb a,b\rdb = \ldb da,b \rdb + (-1)^{|a|} \ldb a, db \rdb $ ,
\end{enumerate}
and, because of~\eqref{eq:swapbim}, once property (1) is fixed, property (2) is equivalent to ask that the bracket is a (graded) derivation in the first argument, for the inner bimodule structure:
\begin{center}
($2^\ast$)\, $\ldb ab, c \rdb = a \ast \ldb b,c \rdb + (-1)^{|b||c|} \ldb a,c\rdb \ast b $ .
\end{center}
Given a binary operation $\ldb -,-\rdb$ (which does not have to be necessarily a double bracket) we define the following operation $A^{\otimes 3} \to A^{\otimes 3}$:
\begin{equation}
\label{eq:tripleL}
\ldb a, u\otimes v \rdb_L := \ldb a,u \rdb \otimes v \,,
\end{equation}
and using this we define the following triary operation 
\begin{equation}
\label{eq:triple}
\ldb a,b,c\rdb := \ldb a, \ldb b,c\rdb \rdb_L + (-1)^{|a|(|b|+|c|)} \tau_{(123)}\ldb b, \ldb c,a\rdb \rdb_L +(-1)^{|c|(|a|+|b|)}  \tau_{(132)} \ldb c, \ldb a,b\rdb \rdb_L\,, 
\end{equation}
or more abstractly
\begin{equation}
\label{eq:tripleab}
\ldb-,-,-\rdb = \sum\limits_{i=0}^2 \tau_{(123)}^i \circ \ldb -,\ldb-,-\rdb\rdb_L \circ \tau_{(123)}^{-i}\, ,
\end{equation}
which makes it clear that it is cyclically invariant. Moreover one can prove that if $\ldb-,-\rdb$ is a double bracket, then it is also a graded derivation in its last argument, so that:
\begin{Lem}[\cite{VdB}]
If $\ldb-,-\rdb$ is a double bracket then the associated triary operation $\ldb-,-,-\rdb$ is a triple bracket.
\end{Lem}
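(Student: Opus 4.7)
The plan is to verify the three axioms of Definition~\ref{def:nbracket} for $n=3$: the outer-derivation property in the last argument, cyclic invariance, and compatibility with the differential.

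Cyclic invariance is essentially built into the definition. The abstract form~\eqref{eq:tripleab} writes $\ldb-,-,-\rdb$ as a sum of three terms related by conjugation with $\tau_{(123)}$, so conjugating the whole sum by $\tau_{(123)}$ merely cyclically permutes the summands; this gives axiom~(2) with the correct sign $(-1)^{3+1}=1$. Compatibility with $d$ then reduces to property~(3) of the double bracket, applied twice: since $d$ is an outer-bimodule derivation on $A^{\otimes n}$, one computes $d\,\ldb a,\ldb b,c\rdb\rdb_L = \ldb da,\ldb b,c\rdb\rdb_L + (-1)^{|a|}\ldb a,d\ldb b,c\rdb\rdb_L$, and then expands the inner $d\ldb b,c\rdb$ again by property~(3). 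The same reasoning applies term by term to the other two cyclic summands, and summing yields axiom~(3) for $\ldb-,-,-\rdb$ with the expected Koszul signs.

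The real content is the outer-derivation property: writing $c=c_1c_2$, one has to establish
\[
\ldb a,b,c_1c_2\rdb = \ldb a,b,c_1\rdb \cdot c_2 + (-1)^{(|a|+|b|)|c_1|} c_1 \cdot \ldb a,b,c_2\rdb \,.
\]
I would expand the three cyclic summands of~\eqref{eq:triple} separately. In $\ldb a,\ldb b,c_1c_2\rdb\rdb_L$ the variable $c_1c_2$ sits in the last slot, so property~(1) of the double bracket splits $\ldb b,c_1c_2\rdb$; the piece with $c_2$ pulled out on the right passes cleanly through $\ldb a,-\rdb_L$, while the piece with $c_1$ on the left requires a further application of property~(1) to $\ldb a, c_1 \cdot(-)\rdb$. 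In the two summands $\tau_{(123)}\ldb b,\ldb c_1c_2,a\rdb\rdb_L$ and $\tau_{(132)}\ldb c_1c_2,\ldb a,b\rdb\rdb_L$, the product $c_1c_2$ sits in the \emph{first} slot of a double bracket, so one uses property~($2^\ast$) --- derivation in the first argument for the \emph{inner} bimodule structure --- and then converts back from the inner to the outer structure using the intertwining identities~\eqref{eq:out/inn} and~\eqref{eq:swapbim}, which translate each inner action by $c_1$ or $c_2$ into an outer action on a cyclically permuted tensor factor (with the appropriate Koszul sign). After applying $\tau_{(123)}$ and $\tau_{(132)}$ to the resulting tensors, the terms regroup as the cyclic sum defining $\ldb a,b,c_1\rdb\cdot c_2$ plus the cyclic sum defining $(-1)^{(|a|+|b|)|c_1|}c_1\cdot\ldb a,b,c_2\rdb$.

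The main obstacle is the meticulous sign bookkeeping: every application of~\eqref{eq:swapbim} generates a sign of the form $(-1)^{|a||b|+|a||u|+|b||v|}$, every tensor permutation introduces further Koszul signs, and these must conspire with the prefactors $(-1)^{|a|(|b|+|c|)}$ and $(-1)^{|c|(|a|+|b|)}$ in~\eqref{eq:triple} as well as with the sign $(-1)^{(|a|+|b|)|c_1|}$ on the right-hand side. My strategy is to first carry out the computation in the ungraded setting, where it reduces to Van den Bergh's original argument in~\cite{VdB} and fixes the combinatorial correspondence between terms on the two sides; then re-introduce the Koszul signs one step at a time, checking after each use of~\eqref{eq:out/inn} and~\eqref{eq:swapbim} that the signs of the matching terms on the two sides agree. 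This incremental approach minimises the risk of a collapsed or miscounted sign in the final identity.
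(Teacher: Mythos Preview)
Your approach is the same as the paper's: cyclic invariance is immediate from~\eqref{eq:tripleab}, compatibility with $d$ follows from iterating property~(3), and the substance lies in the outer-derivation in the last slot, proved by expanding each cyclic summand via properties~(1) and~($2^\ast$).

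One point you do not mention, and which is the actual heart of the computation: the three summands do \emph{not} simply split into pieces that regroup into $\ldb a,b,c_1\rdb\cdot c_2$ and $c_1\cdot\ldb a,b,c_2\rdb$. When you apply property~(1) a second time to $\ldb a,\,c_1\cdot(-)\rdb$ inside the first summand, you generate a cross term of the form $(-1)^{|b||c_1|}\,\ldb a,c_1\rdb\cdot\ldb b,c_2\rdb$ (with the convention $(x\otimes y)\cdot(u\otimes v)=x\otimes yu\otimes v$). An analogous cross term, with the opposite sign, is produced in the second summand when the inner action $\ast\,c_2$ lands in the first tensor factor and you then apply $\ldb b,-\rdb_L$. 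These two cross terms cancel. The paper's proof displays exactly this cancellation. Your outline would discover it upon execution, but be aware that ``the terms regroup'' hides this nontrivial cancellation --- it is precisely where the sign bookkeeping you flag as the main obstacle becomes essential.
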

\begin{proof}
Let us show the super-derivation property in its last argument. The same calculations of the ungraded version of this Lemma (\cite[Proposition 2.3.1]{VdB}), if we keep track of signs, yield for the three summands of $\ldb a,b,cc'\rdb$:
\[
\ldb a,\ldb b,cc'\rdb\rdb_L + (-1)^{s_1} \tau_{(123)} \ldb b,\ldb cc',a \rdb \rdb_L+ (-1)^{s_2} \tau_{(132)} \ldb cc',\ldb a,b\rdb \rdb_L=: \circled{1} + \circled{2} + \circled{3}\,,
\]
where $s_1= |a|(|b|+|c|+|c'|)$ and $s_2=(|c|+|c'|)(|a|+|b|)$.
\[
\begin{aligned}
&\circled{1}=\ldb a,\ldb b,c\rdb\rdb_L \cdot c' + (-1)^{|b||c|}\ldb a,c\rdb \cdot \ldb b,c \rdb + (-1)^{(|a|+|b|)|c|} c\cdot \ldb a,\ldb b,c'\rdb \rdb_L\,,\\
&\circled{2}= (-1)^{s_1+|b||c|}c\cdot \tau_{(123)}\ldb b,\ldb c',a\rdb\rdb_L + \\&(-1)^{s_1+ |a||c'|}\tau_{(123)}\ldb b,\ldb c,a\rdb \rdb_L \cdot c' +
-(-1)^{|b||c|} \ldb a,c\rdb\cdot \ldb b,c'\rdb \,, \\
&\circled{3}=(-1)^{s_2} c\cdot \tau_{(132)} \ldb c',\ldb a,b\rdb \rdb_L + (-1)^{|c|(|a|+|b|)} \tau_{(132)}\ldb c,\ldb a,b\rdb \rdb_L\cdot c' \,,
\end{aligned}
\]
where, in lines 1 and 2, by $(x\otimes y) \cdot (u \otimes v)$ we mean $x \otimes yu\otimes v$. Summing the three expressions we obtain
\[
\ldb a,b,cc'\rdb = \ldb a,b,c\rdb \cdot c' + (-1)^{(|a|+|b|)|c|}c\cdot \ldb a,b,c'\rdb \,.
\]
As for the compatibility between $\ldb-,-,-\rdb$ and the differential, this follows from the fact that both
\[
\ldb-,\ldb -,-\rdb\rdb_L= (1_A\otimes \ldb-,-\rdb) \circ (\ldb-,-\rdb \otimes 1_A) 
\]
and $\tau_\sigma$, for any permutation $\sigma$, commute with the differential.
\end{proof}


\subsection{Double Poisson brackets}
\label{2.3}

\begin{Defn}[\cite{VdB}] A double bracket $\ldb -,-\rdb$ on a dg algebra $A\in \DGA_S$ is called a \textit{double Poisson bracket} if the associated triple bracket is zero: $\ldb -,-,-\rdb = 0$. The identity $\ldb a,b,c\rdb=0$ is called (graded) double Jacobi identity. The pair $(A,\ldb-,-\rdb)$ is called a \textit{differential graded double Poisson algebra}.
\end{Defn}

\begin{Defn} A \textit{morphism} of differential graded double Poisson algebras is a morphism $\varphi:A \to B$ of dg algebras over $S$, such that the induced map $\varphi^{\otimes2}: A^{\otimes 2} \to B^{\otimes2}$ intertwines the two double Poisson structures. We denote the (so obtained) category of dg double Poisson algebras by $\DGPPA_S$.
\end{Defn}

\begin{Remark} Differently from $\DGA_S = S\downarrow \DGA_k$ which denotes the under category, the category of dg double Poisson algebras over $S$ is not the under category $\DGPPA_S \neq S\downarrow \DGPPA_k$ (with $S$ equipped with the zero double Poisson structure). In fact an object of the former has the property that the bracket with any element in the image of $S$ vanishes, while for an object of the former the bracket vanishes a priori only if both variables belong to the image of $S$.
\end{Remark}

\begin{Notation} We denote the full subcategory of dg double Poisson algebras consisting of algebras concentrated in degree zero by $\PPA_S \subset \DGPPA_S$.
\end{Notation}

Let us denote the multiplication map by $m: A^{\otimes 2} \to A$ and, given a double bracket $\ldb-,-\rdb$, let us consider the associated single bracket 
\[
\{-,-\}: A^{\otimes 2} \xrightarrow[]{\ldb-,-\rdb} A^{\otimes 2} \xrightarrow[]{m} A\, .
\]
\begin{Prop}[\cite{VdB}]
\label{prop:almostJacobi}
If $\ldb-,-\rdb$ is a double bracket then the following equation holds in $A^{\otimes 2}$:
\begin{equation}
\begin{aligned}
\label{eq:almostJacobi}
&\{ a,\ldb b,c\rdb \} - \ldb \{a,b\}, c\rdb - (-1)^{|a||b|} \ldb b,\{a,b\} \rdb =\\
&= (m\otimes 1) \ldb a,b,c\rdb -(-1)^{|a||b|} (1\otimes m) \ldb b,a,c\rdb \, ,
\end{aligned}
\end{equation}
where $\{a,-\}$ acts on tensors $u\otimes v$ by $\{a,u\otimes v\} =\{a,u\} \otimes v +(-1)^{|a||u|} u \otimes \{a,v\}$.
\end{Prop}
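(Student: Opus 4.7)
The strategy is to unfold both sides of~\eqref{eq:almostJacobi} using Sweedler-type notation $\ldb a,b\rdb = \ldb a,b\rdb' \otimes \ldb a,b\rdb''$ and the derivation properties of the double bracket, following the same pattern as the ungraded proof in \cite{VdB} while keeping careful track of Koszul signs. Concretely, I would reduce each of the five terms (three on the LHS, two on the RHS) to a sum of pure tensors in $A^{\otimes 2}$ built out of iterated double brackets, and then match them term by term.

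For the LHS, I would expand the second term $\ldb\{a,b\},c\rdb$ by writing $\{a,b\} = \ldb a,b\rdb'\,\ldb a,b\rdb''$ and applying property $(2^\ast)$ (the derivation property for the inner bimodule structure in the first slot):
\begin{equation*}
\ldb\{a,b\},c\rdb \;=\; \ldb a,b\rdb' \ast \ldb \ldb a,b\rdb'',c\rdb \;+\; (-1)^{|\ldb a,b\rdb''|\,|c|}\,\ldb\ldb a,b\rdb',c\rdb \ast \ldb a,b\rdb''.
\end{equation*}
Analogously, the third term $\ldb b,\{a,c\}\rdb$ expands via the outer-derivation property~(1). For the first term I would use the given extension of $\{a,-\}$ as a derivation on $A^{\otimes 2}$, then substitute $\{a,-\}=m\circ\ldb a,-\rdb$ to produce expressions of the shape $m\otimes 1$ and $1\otimes m$ applied to $\ldb a,\ldb b,c\rdb\rdb_L$ and its right-insertion counterpart.

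For the RHS, I would unfold the triple bracket via its definition~\eqref{eq:triple} and apply $m\otimes 1$ (respectively $1\otimes m$) to each of the three cyclic summands. The first summand $\ldb a,\ldb b,c\rdb\rdb_L$ contributes directly to one of the terms produced when expanding $\{a,\ldb b,c\rdb\}$ above. For the other two summands, which involve $\ldb b,\ldb c,a\rdb\rdb_L$ and $\ldb c,\ldb a,b\rdb\rdb_L$, I would invoke the graded cyclic invariance $\ldb x,y\rdb = -(-1)^{|x||y|}\ldb y,x\rdb^\circ$ and formulas~\eqref{eq:out/inn},~\eqref{eq:swapbim} intertwining the inner/outer bimodule structures to convert inner-bimodule actions into outer-bimodule actions. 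After also invoking the cyclic permutations $\tau_{(123)}, \tau_{(132)}$ on the resulting three-tensors and then multiplying two of the slots, one finds exactly the ``extra'' inner-action terms left over from the LHS computation.

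The main obstacle is purely the sign bookkeeping: each occurrence of $\tau_{(123)}$ or $\tau_{(132)}$ in~\eqref{eq:tripleab} introduces Koszul signs depending on the homogeneous degrees of $\ldb b,c\rdb'$ and $\ldb b,c\rdb''$, and each application of $(-)^\circ$ in the cyclic invariance swaps outer and inner actions while inserting further signs according to~\eqref{eq:swapbim}. The underlying combinatorics is identical to \cite[Proposition 2.4.1]{VdB}; what needs checking is that, after substituting homogeneous elements throughout, the sign of each of the six expanded contributions on the LHS matches exactly the sign of its partner produced by $(m\otimes 1)\ldb a,b,c\rdb$ or $(1\otimes m)\ldb b,a,c\rdb$. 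As a sanity check one can also verify the identity degree-wise: both sides of~\eqref{eq:almostJacobi} are homogeneous of total degree $|a|+|b|+|c|$ and are $k$-trilinear in $(a,b,c)$, so it suffices to verify it on homogeneous triples, which is what the above expansion does.
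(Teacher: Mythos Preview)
Your proposal is correct and follows essentially the same approach as the paper. The paper's proof is organized slightly differently: rather than expanding both sides separately in Sweedler notation and matching, it rewrites each of the three LHS terms directly as a specific linear combination of expressions of the form $(m\otimes 1)\,\tau_\sigma\ldb x,\ldb y,z\rdb\rdb_L$ and $(1\otimes m)\,\tau_\sigma\ldb x,\ldb y,z\rdb\rdb_L$, after which the identity~\eqref{eq:almostJacobi} is read off immediately from the definition~\eqref{eq:tripleab} of the triple bracket; but this is only a cosmetic repackaging of the same computation you describe, and the substance (derivation properties, cyclic invariance, Koszul sign bookkeeping, reduction to the ungraded case in \cite{VdB}) is identical.
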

\begin{proof}
With a few intermediate calculations one proves that
\[
\begin{aligned}
&\{a,\ldb b,c\rdb\} = (m \otimes 1) \ldb a,\ldb b,c\rdb \rdb_L + (-1)^{|b||c|} (1\otimes m) \tau_{(123)} \ldb a,\ldb c,b\rdb \rdb_L\,,\\
&\ldb \{ a,b \}, c\rdb =- (-1)^{|c|(|a|+|b|)} (m \otimes 1) \tau_{(132)} \ldb c,\ldb a,b \rdb \rdb_L + \\
&\qquad \qquad \qquad (-1)^{|a||b|+ |b||c| + |a||c|}(1\otimes m)  \tau_{(132)} \ldb c,\ldb b,a\rdb \rdb_L\,,\\
&\ldb b, \{a,c\} \rdb =- (-1)^{|a||c|} (m \otimes 1) \tau_{(123)} \ldb b,\ldb c,a \rdb \rdb_L + (1\otimes m)  \ldb b,\ldb a,c\rdb \rdb_L\,,\\
\end{aligned}
\]
from which equation~\eqref{eq:almostJacobi} follows.
\end{proof}

Let us denote by $[A,A] \subset A $ the linear subspace spanned by graded commutators and the quotient by $A_\n =A/[A,A]$. For an element $a \in A$ we denote by $\overline{a} \in A_\n$ its class modulo $[A,A]$.

\begin{Lem}[\cite{VdB}]
\label{lem:singlebracket}
Let $\ldb-,-\rdb$ be a double bracket. Then the associated single bracket $\{-,-\}:A^{\otimes 2} \to A$ has the following properties:
\begin{enumerate}
\item $\{ [A,A], - \} =0$ ,
\item $\{a,-\} $ is a graded derivation of degree $|a|$ ,
\item $\overline{\{ a,b\}} = -(-1)^{|a||b|} \overline{\{b,a\} } $ ,
\item $d \circ \{-,-\} = \{ -,-\} \circ d $ ,
\item If $\ldb-,-\rdb$ is a double Poisson bracket, then the following ``Leibniz property'' (a version of the Jacobi identity) holds in $A$:
\begin{equation}
\label{eq:Leibniz}
\{ a,\{b,c\} \} = \{ \{ a,b\} ,c\} + (-1)^{|a||b|} \{ b,\{a,c\} \}\, .
\end{equation}
\end{enumerate}
\end{Lem}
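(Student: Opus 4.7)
My plan is to deduce all five properties directly from the axioms of a double bracket (derivation in each slot, cyclic skew-symmetry, compatibility with $d$) together with Proposition~\ref{prop:almostJacobi}, simply by post-composing with the multiplication $m\colon A^{\otimes 2}\to A$ and tracking signs.

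For \textbf{(2)} I would apply $m$ to the outer-derivation rule $\ldb a,bc\rdb=\ldb a,b\rdb\cdot c+(-1)^{|a||b|}b\cdot\ldb a,c\rdb$; since $m(u\otimes v)\cdot c=uvc$ and $b\cdot m(u'\otimes v')=bu'v'$, this gives the graded Leibniz rule for $\{a,-\}$ of degree $|a|$. For \textbf{(4)}, the identity $d\ldb-,-\rdb=\ldb d-,-\rdb+(-1)^{|-|}\ldb -,d-\rdb$ passes through $m$ because $m$ intertwines the tensor-product differential with $d$ on $A$. For \textbf{(3)}, write $\ldb b,a\rdb=\sum u\otimes v$ with $|u|+|v|=|a|+|b|$; the cyclic identity gives $\ldb a,b\rdb=-(-1)^{|a||b|}\sum(-1)^{|u||v|}v\otimes u$, hence $\{a,b\}=-(-1)^{|a||b|}\sum(-1)^{|u||v|}vu$. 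Modulo graded commutators $vu\equiv(-1)^{|u||v|}uv$, so the extra sign is absorbed and $\overline{\{a,b\}}=-(-1)^{|a||b|}\overline{\{b,a\}}$.

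For \textbf{(1)}, I use the inner-derivation property $(2^\ast)$: $\ldb bc,a\rdb=b\ast\ldb c,a\rdb+(-1)^{|c||a|}\ldb b,a\rdb\ast c$. From the explicit formulas $b\ast(u\otimes v)=(-1)^{|b||u|}u\otimes bv$ and $(u'\otimes v')\ast c=(-1)^{|c||v'|}u'c\otimes v'$, applying $m$ and using the constraints $|u|+|v|=|c|+|a|$ and $|u'|+|v'|=|b|+|a|$, every residual sign in $\{bc,a\}-(-1)^{|b||c|}\{cb,a\}$ telescopes to $0$. Thus $\{-,a\}$ vanishes on graded commutators, and by (3) so does $\{a,-\}$ modulo $[A,A]$, but in fact the computation above shows it vanishes on the nose on the left slot.

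Finally, for the \textbf{(5)} Jacobi identity, I apply $m\colon A^{\otimes 2}\to A$ to the identity of Proposition~\ref{prop:almostJacobi}. The right-hand side vanishes because $\ldb-,-,-\rdb=0$ under the double Poisson assumption. On the left, $m\ldb\{a,b\},c\rdb=\{\{a,b\},c\}$ and $m\ldb b,\{a,c\}\rdb=\{b,\{a,c\}\}$ by definition, while the key observation for the first term is that $\{a,-\}$ commutes with $m$ up to its action on a tensor product: expanding $\{a,u\otimes v\}=\{a,u\}\otimes v+(-1)^{|a||u|}u\otimes\{a,v\}$ and applying $m$ gives exactly $\{a,u\}v+(-1)^{|a||u|}u\{a,v\}=\{a,uv\}$ by the graded Leibniz rule (2) just established. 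Hence $m\{a,\ldb b,c\rdb\}=\{a,\{b,c\}\}$, yielding the desired identity.

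The only non-routine step is the sign bookkeeping in (1); everything else is essentially a mechanical consequence of the graded derivation, cyclicity, and differential-compatibility axioms, plus the almost-Jacobi identity already proved.
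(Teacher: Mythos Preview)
Your proof is correct and follows essentially the same route as the paper: apply $m$ to the double-bracket axioms for (2)--(4), use the inner-derivation property $(2^\ast)$ for (1), and apply $m$ to Proposition~\ref{prop:almostJacobi} for (5). The paper packages (1) and (3) a bit more slickly---observing once that $m(a\ast\omega)=(-1)^{|a||\omega|}m(\omega\ast a)$ for (1) and that $m(\alpha-\alpha^\circ)\in[A,A]$ for any $\alpha\in A^{\otimes2}$ for (3)---but your explicit sign-tracking amounts to the same computation.
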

\begin{proof}
Using the super-derivation property in the first argument:
\[
\ldb [a,b] ,c\rdb = a \ast \ldb b,c\rdb -(-1)^{|a|(|b|+|c|)}\ldb b,c\rdb \ast a + (-1)^{|b||c|} ( \ldb a,c\rdb \ast b - (-1)^{|b|(|a|+|c|)} b \ast \ldb a,c\rdb )\, ,
\]
and in general for $a \in A$ and $\omega \in A^{\otimes2}$: $m(a \ast \omega) = (-1)^{|a||\omega|} m (\omega \ast a)$, from which (1) follows: 
\[
\{[a,b],c\} = m \ldb [a,b],c\rdb = 0\, .
\]
(2) is obvious. For (3) it is enough to observe that for any $\alpha \in A^{\otimes2}$, $m (\alpha-\alpha^\circ) \in [A,A]$, and apply this to $\alpha=\ldb a,b\rdb$. (4) is:
\[
d\circ \{-,-\} = d\circ m \circ \ldb-,-\rdb = m\circ d\circ\ldb -,-\rdb =m \circ \ldb -,-\rdb \circ d = \{-,-\} \circ d\, ,
\]
where by the same symbol $d$ we mean both the differential on $A$ and the induced one on $A^{\otimes 2}$. (5) follows from applying the multiplication map to~\eqref{eq:almostJacobi}.

\end{proof}

By (1) and (2) the map $\{-,-\}$ induces a well-defined map $\{-,-\}_\n : A_\n^{\otimes 2} \to A_\n$ which, by properties (3) (4) and (5), makes $A_\n$ into a differential graded Lie algebra $(A_\n, \{-,-\}_\n) \in \DGLA_k$:

\begin{Lem}
If $(A,\ldb -,-\rdb)$ is a double Poisson algebra, the induced bracket $\{-,-\}_\n$ on $A_\n$ makes it a differential graded Lie algebra.
\end{Lem}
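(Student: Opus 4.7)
My plan is to show that the four items needed for a dg Lie algebra structure on $A_\n$ — well-definedness of the bracket on the quotient, graded antisymmetry, graded Jacobi identity, and compatibility with the differential — all follow essentially formally from properties (1)--(5) of the preceding Lemma on $\{-,-\}: A^{\otimes 2} \to A$.

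First I would establish that $\{-,-\}$ descends to a bracket $\{-,-\}_\n$ on $A_\n \otimes A_\n$. Well-definedness in the first argument is immediate from property (1): $\{[A,A], A\} = 0$. For well-definedness in the second argument, note that for $b \in [A,A]$ we have $\{b,a\} = 0$ by (1), so by the antisymmetry (3) we get $\overline{\{a,b\}} = -(-1)^{|a||b|} \overline{\{b,a\}} = 0$, i.e.\ $\{a,b\} \in [A,A]$. Graded antisymmetry of $\{-,-\}_\n$ is then property (3) itself, read in $A_\n$.

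Next, the differential on $A$ descends to $A_\n$ because $d$ is a graded derivation of $A$, hence preserves $[A,A]$. Compatibility between $d$ and $\{-,-\}_\n$ follows by passing property (4) to the quotient: $d\{a,b\}_\n = \{da,b\}_\n + (-1)^{|a|} \{a, db\}_\n$ in $A_\n$. The Jacobi identity for $\{-,-\}_\n$ is obtained by taking the image in $A_\n$ of the Leibniz identity (5), which a priori holds in $A$; since all terms in (5) are single-bracket expressions and the quotient map is a map of graded vector spaces, the identity survives the projection.

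Honestly, no step here is a serious obstacle — all of the genuine work was done in the previous Lemma, in particular the ``almost Jacobi'' identity of Proposition~\ref{prop:almostJacobi} which yields (5) after applying the multiplication map. The only mildly subtle point, and the one I would emphasise in writing it up, is the well-definedness of $\{-,-\}_\n$ in the second argument, which is not immediate from (1) alone but needs (1) combined with (3); this little trick reflects the fact that double brackets are manifestly ``derivation-like'' only in one slot, with the other-slot behaviour controlled by the $\tau_{(12)}$-symmetry.
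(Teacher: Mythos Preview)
Your proof is correct and follows essentially the same approach as the paper: both simply invoke properties (1)--(5) of the preceding Lemma to check the dg Lie axioms on $A_\n$. One minor difference: for well-definedness of $\{-,-\}_\n$ in the second argument you use (1)+(3), whereas the paper (in the sentence just before the Lemma) appeals to (1)+(2) --- since $\{a,-\}$ is a graded derivation it sends $[A,A]$ to $[A,A]$ directly --- so the step you flag as ``mildly subtle'' is in fact immediate from (2) alone.
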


\begin{proof}
Indeed if we use antisymmetry of the induced bracket on $A_\n$, the Leibniz identity~\eqref{eq:Leibniz} becomes the (graded) Jacobi identity in its usual form:
\[
(-1)^{|a||c|}\{ a,\{b,c\}_\n \}_\n + (-1)^{|a||b|} \{ b,\{c,a\}_\n \}_\n + (-1)^{|b||c|} \{ c, \{ a,b\}_\n \}_\n =0\, .
\]
The compatibility between the differential and $\{-,-\}_\n$ follows from the compatibility between the differential and $\{-,-\}$ and the projection $A\to A_\n$.
\end{proof}

The bracket $\{-,-\}_\n$ is slightly more than simply a (dg) Lie structure, in fact for each element $a \in A$, the map $\{\overline{a},-\}_\n: A_\n \to A_\n$ is induced by a (graded) derivation (of degree $|a|$) $\partial_a = \{a,-\}$. This is what is called a (differential graded) $\HH_0$-Poisson structure on $A$:

\begin{Defn}[\cite{Cr} --- ungraded version]
A (differential graded) $\HH_0$\textit{-Poisson structure} on a (differential graded) algebra $A\in \DGA_S$ is a (differential graded) Lie bracket $\{-,-\}_\n$ on $A_\n$ with the property that for each homogeneous element $a \in A$, the map $\{\overline{a},-\}_\n$ is induced by a graded, $S$-linear derivation $\partial_a : A \to A$ of degree $|a|$. We call the pair $(A,\{-,-\}_\n)$ an $\HH_0$\textit{-Poisson algebra (over $S$)}.
\end{Defn}

\begin{Defn}
A \textit{morphism} of $H_0$-Poisson algebras is a morphism $\varphi:A \to B$ of dg algebras over $S$ such that the induced map $\varphi_\n : A_\n \to B_\n$ is a morphism of dg Lie algebras. We denote the (so obtained) category of dg $\HH_0$-Poisson algebras by $\DGPA_S$.
\end{Defn}

\begin{Lem}
\label{lem:H0P}
There is a natural forgetful functor $\DGPPA_S \to \DGPA_S$ which sends a double Poisson algebra $(A,\ldb-,-\rdb)$ to the $\HH_0$-Poisson algebra $(A,\{-,-\}_\n)$, where $\{-,-\}_\n$ is the bracket on $A_\n$ induced by the single bracket associated to $\ldb-,-\rdb$.
\end{Lem}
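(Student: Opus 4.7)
The plan is to verify that the assignment $(A,\ldb-,-\rdb)\mapsto (A,\{-,-\}_\n)$ is well-defined on objects, extends to morphisms, and respects composition and identities. The last two requirements are essentially automatic once we check that the underlying algebra map $\varphi:A\to B$ is reused unchanged as the morphism in $\DGPA_S$, so the bulk of the work lies in (i) producing, for each double Poisson algebra, the required collection of $S$-linear graded derivations $\partial_a$, and (ii) showing that any morphism of double Poisson algebras induces a morphism of dg Lie algebras on the commutator quotients.

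For step (i), I would set $\partial_a:=\{a,-\}=m\circ \ldb a,-\rdb:A\to A$. By Lemma~\ref{lem:singlebracket}(2), $\partial_a$ is a graded derivation of degree $|a|$, and the definition of $\{-,-\}_\n$ via the projection $A\to A_\n$ means that $\{\overline{a},-\}_\n$ is induced by $\partial_a$ by construction. The $S$-linearity of $\partial_a$ needs a brief check: for any $s\in S$, the $S$-linearity of the double bracket (Definition~\ref{def:slinear}) gives $\ldb a,s\rdb=0$, hence $\partial_a(s)=0$; since $\partial_a$ is a derivation, this is exactly the condition that $\partial_a$ is $S$-linear. Together with the previous lemma (stating that $(A_\n,\{-,-\}_\n)$ is a dg Lie algebra), this shows $(A,\{-,-\}_\n)$ is a dg $\HH_0$-Poisson algebra in the sense of the definition.

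For step (ii), given a morphism $\varphi:A\to B$ in $\DGPPA_S$, by hypothesis $\varphi^{\otimes 2}\circ \ldb-,-\rdb_A=\ldb-,-\rdb_B\circ\varphi^{\otimes 2}$. Composing with the multiplication maps $m_A,m_B$ and using $m_B\circ\varphi^{\otimes 2}=\varphi\circ m_A$, this yields $\varphi\circ\{-,-\}_A=\{-,-\}_B\circ\varphi^{\otimes 2}$, i.e.\ $\varphi$ intertwines the single brackets. Since $\varphi$ sends graded commutators to graded commutators, it descends to $\varphi_\n:A_\n\to B_\n$, and the identity above descends to $\varphi_\n\circ\{-,-\}_{\n,A}=\{-,-\}_{\n,B}\circ\varphi_\n^{\otimes 2}$. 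Compatibility with differentials is inherited from the fact that $\varphi$ is a chain map, so $\varphi_\n$ is a morphism of dg Lie algebras.

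Finally, functoriality (preservation of identities and composition) is immediate because the functor acts as the identity on the underlying dg $S$-algebras and morphisms; only the extra structure is adjusted. I do not anticipate any real obstacle: the statement is a clean consequence of Lemma~\ref{lem:singlebracket}, Definition~\ref{def:slinear}, and the dg Lie structure on $A_\n$ already established. The only subtlety worth flagging is the $S$-linearity of $\partial_a$, which is precisely why the notion of $S$-linear double bracket was introduced in Definition~\ref{def:slinear}.
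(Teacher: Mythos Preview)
Your proposal is correct and complete. The paper does not actually supply a proof of this lemma; it is stated immediately after Lemma~\ref{lem:singlebracket} and the dg Lie structure on $A_\n$ as an evident consequence, so your write-up is precisely the verification one would give, with the only non-tautological point being the $S$-linearity of $\partial_a$, which you handle correctly via Definition~\ref{def:slinear}.
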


\begin{Remark} 
\label{rem:H0CB}
$\HH_0$-Poisson structures were first introduced by W. Crawley-Boevey in \cite{Cr}. They are the natural structure to consider if one wants an induced ordinary Poisson structure on the $\GL_n$-invariant part of the representation scheme $A_n^{GL_n}$. In fact one proves that there is only one induced Poisson structure on $A_n^{GL_n}$ with the property that the trace map $\tr :A_\n \to A_n^{GL_n}$ is a map of dg Lie algebras:
\begin{equation}
\label{eq:trace}
 \tr \{ a ,b\} = \{ \tr(a),\tr(b)\}\,.
\end{equation}
The reason why such a `single' noncommutative Poisson structure is enough if one wants a Poisson structure on the $\GL_n$-invariant subalgebra is that the latter is generated by traces, therefore actually the Poisson structure depends only on $2$ indices, and not $4$. If one wants a Poisson structure on the whole $A_n$, is forced to consider double Poisson structures.
\end{Remark}


\subsection{Building new double Poisson structures from old}
\label{2.4}

\begin{Prop}[\cite{VdB}]
\label{prop:free}
If $(A,\ldb-,-\rdb_A)$ and $(B,\ldb-,-\rdb_B)$ are double Poisson algebras over $S$, their free product $A\ast_S B$ (coproduct in the category of dg algebras over $S$) has an induced natural double Poisson structure over $S$, defined uniquely by the formulas:
\begin{equation}
\label{eq:coprod}
\ldb a, b\rdb := 0\,,\quad \ldb a,a'\rdb:=\ldb a,a'\rdb_A\,,\quad \ldb b,b'\rdb :=\ldb b,b'\rdb_B\,,
\end{equation}
for each $a,a' \in A$ and $b,b' \in B$.
\end{Prop}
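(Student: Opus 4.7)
My plan is to construct the bracket on $A \ast_S B$ by extending the formulas~\eqref{eq:coprod} from generators via the two derivation properties of a double bracket: derivation in the second argument for the outer bimodule structure, and derivation in the first argument for the inner bimodule structure. Since every element of $A \ast_S B$ is an $S$-linear combination of monomials $x_1 \cdots x_n$ with each $x_i$ in (the image of) $A$ or $B$, these two rules express $\ldb x_1 \cdots x_n,\, y_1 \cdots y_m \rdb$ entirely in terms of the brackets on generators. The first task is to verify that this expression is unambiguous: expanding in the first slot and then in the second must match expanding in the opposite order, which reduces to the compatibility identity~\eqref{eq:out/inn} between the inner and outer bimodule structures. $S$-linearity of the extension, in the sense of Definition~\ref{def:slinear}, is inherited from the $S$-linearity of the brackets on $A$ and $B$ together with the vanishing of the cross-bracket.

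Next I would verify the double Poisson axioms on generators and propagate them. Antisymmetry holds on generators: within $A$ or within $B$ it is assumed, and for a pair with $a\in A, b\in B$ both $\ldb a,b\rdb$ and $\ldb b,a\rdb^\circ$ vanish. Because $(-)^\circ$ exchanges the outer and inner bimodule structures via~\eqref{eq:swapbim}, antisymmetry is preserved under the simultaneous derivation extension in both slots. Compatibility with the differential reduces on generators to the same property on $A$ and on $B$, since the differential on the free product is a derivation preserving each summand.

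The heart of the proof is the graded double Jacobi identity $\ldb -,-,- \rdb = 0$. Since the triple bracket associated to a double bracket is itself a $3$-bracket and hence a derivation in each argument (for the appropriate bimodule structures on $(A \ast_S B)^{\otimes 3}$), it suffices to verify the identity on triples of generators. For triples lying entirely in $A$, or entirely in $B$, this reduces to the double Jacobi identity in $A$ or $B$, respectively. For mixed triples, I would perform a short case analysis using the expansion~\eqref{eq:triple}: in each of the three cyclic summands, either the inner double bracket is a cross-bracket and hence vanishes, or it lands in $A\otimes A$ (resp.\ $B\otimes B$) and then the outer bracket with the remaining generator from the other factor vanishes tensor-slot by tensor-slot. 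Summing yields zero.

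The main obstacle will be the bookkeeping: verifying the consistency of the two-slot derivation extension on monomials of arbitrary length, and tracking signs carefully in the graded cyclic sums that constitute the triple bracket in the mixed case. Once these points are settled, the remaining verifications are either assumed on generators or propagate formally from the universal property of the free product in $\DGA_S$.
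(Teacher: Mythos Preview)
Your proposal is correct and matches the paper's approach: the paper in fact leaves the details to the reader, observing only that ``the induced structure on the coproduct does not mix the two structures, therefore its properties (super-derivation, cyclic invariance, compatibility with the differential) follow from the corresponding properties of the double Poisson brackets on $A$ and $B$.'' Your plan is precisely a fleshed-out version of this remark, including the key reduction of the double Jacobi identity to generators via the fact that the associated triple operation is itself an $n$-bracket (the Lemma following Definition~\ref{def:nbracket}), together with the case analysis on mixed triples.
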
 
\begin{proof} As the ungraded version of this Proposition (\cite[Proposition 2.5.1]{VdB}) the proof is left to the reader, because fairly easy. In fact the induced structure on the coproduct does not mix the two structures, therefore its properties (super-derivation, cyclic invariance, compatibility with the differential) follow from the corresponding properties of the double Poisson brackets on $A$ and $B$.
\end{proof}

\begin{Prop}
\label{prop:cop}
The free product construction in Proposition~\ref{prop:free} is the coproduct in the category $\DGPPA_S$. The algebras $S$ and $0$, both with zero double Poisson structure, are respectively, the inital and final object in $\DGPPA_S$.
\end{Prop}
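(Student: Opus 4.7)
The plan is to verify the coproduct universal property for the free product $A\amalg_S B$ equipped with the bracket from Proposition~\ref{prop:free}, and then dispatch the initial and terminal claims separately. The first step is to observe that the canonical inclusions $i_A:A\to A\amalg_S B$ and $i_B:B\to A\amalg_S B$ are morphisms in $\DGPPA_S$: they are the coproduct inclusions in $\DGA_S$, and the defining formulas~\eqref{eq:coprod} show that the restriction of $\ldb-,-\rdb_{A\amalg_S B}$ to $i_A(A)\times i_A(A)$ (resp.\ $i_B(B)\times i_B(B)$) reproduces $\ldb-,-\rdb_A$ (resp.\ $\ldb-,-\rdb_B$), so $i_A^{\otimes 2}$ and $i_B^{\otimes 2}$ intertwine the brackets.

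Given $C\in\DGPPA_S$ together with morphisms $f:A\to C$ and $g:B\to C$ in $\DGPPA_S$, the universal property of the coproduct $A\amalg_S B$ in $\DGA_S$ delivers a unique dg algebra map over $S$, call it $h:A\amalg_S B\to C$, with $h\circ i_A=f$ and $h\circ i_B=g$. The main point is to upgrade this to a morphism of double Poisson algebras. I would argue that verifying $h^{\otimes 2}(\ldb x,y\rdb)=\ldb h(x),h(y)\rdb$ reduces, by the outer-derivation property of the target bracket in the second argument and the inner-derivation property ($2^\ast$) from \S\ref{2.2} in the first, to checking the identity when $x,y\in i_A(A)\cup i_B(B)$. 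The diagonal cases $(A,A)$ and $(B,B)$ are precisely the hypotheses that $f$ and $g$ are double Poisson morphisms. The off-diagonal cases $(A,B)$ and $(B,A)$ must match the defining vanishing $\ldb i_A(a),i_B(b)\rdb_{A\amalg_S B}=0$ with a corresponding vanishing $\ldb f(a),g(b)\rdb_C=0$ in the target; this is the heart of the argument, and the step where one truly exploits the freeness/independence of the two factors inside the coproduct.

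For the extremal objects: the structure map $S\to A$ is the unique dg algebra map from $S$ to $A$ in $\DGA_S$, and it is automatically double Poisson because the $S$-linearity condition (Definition~\ref{def:slinear}) combined with cyclic invariance forces $\ldb s,a\rdb_A=0$ and $\ldb a,s\rdb_A=0$ for all $s\in S$, $a\in A$, so in particular the restriction of $\ldb-,-\rdb_A$ to the image of $S\otimes S$ vanishes, matching the zero bracket on $S$. The terminal case is formal: the zero algebra admits a unique map from any object, trivially preserving the (zero) bracket structure. I expect the main obstacle to be the off-diagonal bracket-matching in the universal property, which requires arguing that the images of the two factors Poisson-commute inside the target $C$.
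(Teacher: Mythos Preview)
Your outline is essentially the paper's own argument: use the underlying $\DGA_S$-coproduct to produce the unique candidate $h$, then check bracket-compatibility on generators, splitting into the diagonal cases (handled by the hypotheses on $f,g$) and the off-diagonal case. You are right to single out the off-diagonal case as the heart of the matter; the paper's proof simply writes
\[
h^{\otimes 2}(\ldb a,b\rdb)=0=\ldb f(a),g(b)\rdb_C
\]
without justification.

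The problem is that this middle equality is \emph{false} in general, so neither your sketch nor the paper's proof can be completed. Take $S=k$, $A=k[x]$ and $B=k[y]$ with the zero double bracket, and $C=k\langle x,y\rangle$ with the canonical double bracket $\ldb x,y\rdb_C=1\otimes 1$, $\ldb x,x\rdb_C=\ldb y,y\rdb_C=0$ (the cotangent bracket of Proposition~\ref{prop:0shift}, cf.\ Proposition~\ref{prop:dPdualquiver} for $Q$ a single loop). The inclusions $f:x\mapsto x$ and $g:y\mapsto y$ are morphisms in $\DGPPA_k$ because each factor lands in a sub\-algebra on which the target bracket vanishes. The free product $A\amalg_k B$ with the bracket of Proposition~\ref{prop:free} is $k\langle x,y\rangle$ with the \emph{zero} bracket, and the unique $\DGA_k$-map $h:A\amalg_k B\to C$ is the identity; but $h^{\otimes2}(\ldb x,y\rdb)=0\neq 1\otimes 1=\ldb h(x),h(y)\rdb_C$, so $h$ is not a morphism in $\DGPPA_k$. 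Thus no extension exists and the free product with this bracket fails the coproduct universal property.

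In short: your instinct that the off-diagonal step ``requires arguing that the images of the two factors Poisson-commute inside the target $C$'' is exactly right, and that argument is unavailable---there is no mechanism forcing $\ldb f(a),g(b)\rdb_C=0$. The statement (not just the proof) is defective as written. Your treatment of the initial and terminal objects is correct.
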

\begin{proof} Let $C \in \DGPPA_S$ and $\varphi : A \to C$, $\psi:B \to C$ morphisms of $\DGPPA_S$. Because morphisms of dg double Poisson are morphisms of dg algebras with additional properties, in particular we have a unique morphism of dg algebras $F: A \ast_S B \to C$ that extends $\varphi$ and $\psi$. The fact that $F^{\otimes 2}$ commutes with the double Poisson bracket defined in~\eqref{eq:coprod} can be easily tested on the generators:
\[
\begin{aligned}
&F^{\otimes 2}(\ldb a ,b \rdb ) = 0 = \ldb \varphi(a),\psi(b) \rdb = \ldb F(a),F(b)\rdb\,, \\
&F^{\otimes 2}(\ldb a,a' \rdb ) = \varphi^{\otimes 2} (\ldb a,a' \rdb_A) = \ldb \varphi(a),\varphi(a') \rdb_A = \ldb F(a), F(a')\rdb \,,\\
&F^{\otimes 2} (\ldb b,b'\rdb) = \psi^{\otimes 2} (\ldb b,b'\rdb_B) = \ldb \psi(b),\psi(b')\rdb_B = \ldb F(b),F(b')\rdb \,.
\end{aligned}
\]
The fact that $S$ and $0$ are the initial and final objects in $\DGPPA_S$ follows from the fact that they are such objects in $\DGA_S$ and that for any $A\in \DGPPA_S$ the initial and terminal map $S\to A \to 0$ are maps of double Poisson algebras (because of the assumption of $S$-linearity on the double Poisson bracket on $A$).
\end{proof}

Another construction that will be useful later is the particular case of differential graded algebras $A$ with a double Poisson bracket and such that their differential is of the very special form 
\[
d= \{ \gamma , - \} 
\]
where $\{-,-\}$ is the associated single bracket on $A$. If we start simply from a graded algebra with a double Poisson bracket without differential we give the following definition:
\begin{Defn}
\label{def:charge}
Let $A$ be a graded algebra and $\ldb-,-\rdb$ a double Poisson bracket on it. We call an element $\gamma$ of degree $|\gamma|=-1$ a \textit{noncommutative charge} if the single bracket of it by itself lies in the (graded) commutators subspace: $\{\gamma,\gamma\} \in [A,A]$.
\end{Defn}

\begin{Prop}
\label{prop:charge}
If $\gamma$ is a noncommutative charge on $(A,\ldb-,-\rdb)$, then $d=\{\gamma,-\}$ is a differential on $A$ which is compatible with the double Poisson bracket, so it gives $A$ the structure of a differential graded double Poisson algebra.
\end{Prop}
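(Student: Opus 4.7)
The plan is to verify the three defining properties of a dg double Poisson structure: (i) $d$ is a graded derivation of degree $-1$, (ii) $d^2=0$, (iii) compatibility $d\circ\ldb -,-\rdb = \ldb -,-\rdb\circ d$. All three follow from results already proved in this section, so the proof should be short.

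For (i), observe that by Lemma~\ref{lem:singlebracket}(2) the map $\{\gamma,-\}$ is a graded derivation of degree $|\gamma|=-1$, and since $\ldb-,-\rdb$ is $S$-linear the same is true of $d$. For (iii), I would apply Proposition~\ref{prop:almostJacobi} with first argument $\gamma$: since the triple bracket vanishes by the double Jacobi identity, the RHS of~\eqref{eq:almostJacobi} is zero, giving
\begin{equation*}
\{\gamma,\ldb a,b\rdb\} = \ldb\{\gamma,a\},b\rdb + (-1)^{|\gamma||a|}\ldb a,\{\gamma,b\}\rdb\,,
\end{equation*}
and since $(-1)^{|\gamma||a|}=(-1)^{-|a|}=(-1)^{|a|}$, this rewrites as $d\ldb a,b\rdb = \ldb da,b\rdb + (-1)^{|a|}\ldb a,db\rdb$, which is exactly condition (3) of Definition~\ref{def:nbracket} for a double bracket.

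The only step requiring a small computation is (ii). I would compute using the Leibniz identity~\eqref{eq:Leibniz} from Lemma~\ref{lem:singlebracket}(5) with $a=b=\gamma$ and third argument an arbitrary $c\in A$:
\begin{equation*}
\{\gamma,\{\gamma,c\}\} = \{\{\gamma,\gamma\},c\} + (-1)^{|\gamma|^2}\{\gamma,\{\gamma,c\}\} = \{\{\gamma,\gamma\},c\} - \{\gamma,\{\gamma,c\}\}\,,
\end{equation*}
so $2d^2(c) = \{\{\gamma,\gamma\},c\}$. By the hypothesis that $\gamma$ is a noncommutative charge, $\{\gamma,\gamma\}\in [A,A]$, and then Lemma~\ref{lem:singlebracket}(1) gives $\{\{\gamma,\gamma\},c\}=0$, whence $d^2=0$ (using $\mathrm{char}\,k=0$).

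The main (mild) obstacle is the $d^2=0$ computation: one must notice that the naive approach of using Jacobi on $\{\gamma,\{\gamma,c\}\}$ produces the bracket $\{\{\gamma,\gamma\},c\}$, which is \emph{not} automatically zero just because $\{\gamma,\gamma\}\in[A,A]$ at the level of the algebra — but it is zero when inserted as the first argument of the single bracket, precisely by property~(1) of Lemma~\ref{lem:singlebracket}. This is exactly why the definition of noncommutative charge requires the weaker condition $\{\gamma,\gamma\}\in[A,A]$ rather than the stronger $\{\gamma,\gamma\}=0$, which one might naively have expected.
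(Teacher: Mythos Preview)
Your proof is correct and follows essentially the same route as the paper: the derivation property from Lemma~\ref{lem:singlebracket}(2), the computation $d^2=\tfrac12\{\{\gamma,\gamma\},-\}$ via the Leibniz identity~\eqref{eq:Leibniz} with $a=b=\gamma$ combined with Lemma~\ref{lem:singlebracket}(1), and the compatibility with the bracket from Proposition~\ref{prop:almostJacobi} applied with $a=\gamma$. Your closing remark about why $\{\gamma,\gamma\}\in[A,A]$ (rather than $=0$) is the right hypothesis is a nice observation that the paper does not spell out.
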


\begin{proof}
Obviously $d$ is a linear map of degree $-1$ satisfying $d(ab) =d(a)b +(-1)^{|a|}a d(b)$ (from Lemma~\ref{lem:singlebracket}, (2)). If we apply the Leibniz property (Lemma~\ref{lem:singlebracket}, (5)) to $a=b=\gamma$ we obtain:
\[
d^2 = \frac12 \{ \{\gamma,\gamma\}, -\} \,,
\]
so that $d^2=0$ follows from $\{\gamma,\gamma\} \in [A,A]$ (because of Lemma~\ref{lem:singlebracket}, (1)). From Proposition~\ref{prop:almostJacobi} applied to $a=\gamma$ we have 
\[
d \ldb b,c\rdb = \ldb db, c \rdb + (-1)^{|b|} \ldb b,dc\rdb \,,
\]
which is the desired compatibility between the bracket and the differential.
\end{proof}


\section{Derived noncommutative Poisson reduction}
\label{3}

The aim of this Section is to introduce the noncommutative constructions mentioned in the left side of the `dictionary' (Figure~\ref{fig:dic}), together with some proofs of a few structural results that these definitions are well-behaved and satisfy some good properties. The impatient reader who wants to know why we give such definitions is encouraged to jump from time to time from this Section to \S\ref{4}, which hopefully clarifies everything.


\subsection{Crash course in noncommutative geometry}
\label{3.1}

This first Section is a crash course in noncommutative geometry. We recall a few definitions and basic results in the theory, but we try to stick to the minimum required in order to understand the following parts of the paper. There is obviously much more to be said, and the interested reader can consult the more foundational references \cite{CEG,G2,G1}.

Throughout this Section we consider algebras $A\in \Alg_S$ for which the structure map $i:S\to A$ is injective. The space of noncommutative $1$-forms, or K{\"a}hler differentials, is the $A$-bimodule $\Omega_S^1 A:=\ker(m)$, kernel of $m : A\otimes_S A \to A$, the multiplication map (over $S$). The universal ($S$-linear) derivation is the map $d\in \Der_S(A,\Omega_S^1 A)$ defined by 
\begin{equation}
\label{eq:univder}
d(a) =a \otimes 1 - 1 \otimes a\, .
\end{equation}
The couple $(\Omega_S^1 A, d)$ is a universal $A$-bimodule equipped with a derivation in the sense that for each $A$-bimodule $M$, there is a natural isomorphism
\begin{equation}
\label{eq:univadj}
\Der_S(A,M) \xrightarrow[]{\sim} \Hom_{A-\Bimod}(\Omega_S^1 A,M)\,, \quad \partial \mapsto \varphi_\partial \,,
\end{equation}
where $\varphi_\partial$ is the only morphism such that $\varphi_\partial \circ d = \partial$. Because $\Omega_S^1 A$ is spanned by the elements $a d b$, for $a,b \in A$, we have $\varphi_\partial(a db) = a \partial(b)$. 

Let us consider the controvariant (duality):
\begin{equation}
(-)\dual := \Hom_{A-\Bimod} (-,A\otimes A): A-\Bimod \to A-\Bimod\, , 
\end{equation}
where we view $A\otimes A$ with the outer $A$-bimodule structure, but the inner structure survives and gives, for each $M\in A-\Bimod$, the above-claimed $A$-bimodule structure on $M\dual$. In particular if we consider $M=\Omega_S^1 A$ we obtain, by~\eqref{eq:univadj}, a natural isomorphism between the noncommutative vector fields $\Theta_S^1 A:=(\Omega_S^1A)\dual$ and the space of double ($S$-linear) derivations:
\begin{equation}
\label{eq:ncvectorfields}
\DDer_S(A) := \Der_S(A,A\otimes A) \cong \Hom_{A-\Bimod}(\Omega_S^1 A,A\otimes A) =: \Theta_S^1 A
\end{equation}

\begin{Defn} The \textit{noncommutative cotangent bundle} of $A$ is the tensor algebra of the $A$-bimodule of noncommutative vector fields, and it is denoted by
\begin{equation}
\label{eq:nccot}
T^\ast A : = T_A \left( \Theta_S^1 A \right) = T_A \left( \DDer_S ( A) \right) \, .
\end{equation}
\end{Defn}

\begin{Remark}
\label{rem:shiftedcot}
This could be considered either as graded algebra or as an ungraded algebra placed all in degree zero. In the first case if we apply the representation functor to it we obtain the shifted cotangent bundle of the representation scheme of $A$, and in the second case the ordinary (unshifted) cotangent bundle. In this paper we consider mainly the second version (the ungraded one), for which we reserve the symbol $T^\ast A( =T_A \DDer_S(A))$, while when we really want to specify the graded version, we write $T^\ast_\odd A(=T_A \DDer_S(A)[1])$. 
\end{Remark}

The setting in which later we do noncommutative Poisson reduction is the case in which the algebra $S$ is the vector space generated by a finite set $S=kI$, consisting of orthogonal idempotents: $e_i e_j=\delta_{ij} e_j$. In this case we can define the following distinguished double derivation $\Delta \in \DDer_S(A)$:
\begin{equation}
\label{eq:distdelta}
\Delta(a) = \sum\limits_i ( a e_i\otimes e_i -e_i\otimes e_i a)\,,
\end{equation}
which is a preferred lifting of the universal derivation in the sense that $d=\pi\circ \Delta$, where $\pi: A\otimes A \to A\otimes_S A$ denotes the canonical projection. This distinguished double derivation can be divided into its components over $I$:
\begin{equation}
\label{eq:deltai}
\Delta= \sum\limits_i \Delta_i, \quad \Delta_i(a) = ae_i \otimes e_i-e_i\otimes e_i a\, .
\end{equation}


\subsection{Natural double Poisson structure on cotangent bundles}
\label{3.2}

M. Van den Bergh introduced a natural $(-1)$-shifted (see Remark~\ref{rem:shifted}) graded double Poisson structure on $T^\ast_\odd A$  which is called double Schouten-Nijenhuis structure (\cite[\S3.2]{VdB}). In this Section we recall his construction and we show that the same definition (on generators) actually gives ($0$-shifted) double Poisson structure on $T^\ast A$, which --- with a minus sign --- is the one we are interested in.

Throughout this Section we suppose that $A$ is finitely generated (all the examples we want to consider in this paper are of this form). In this case there is a natural identification between triple derivations and the tensor product of double derivations with $A$ itself:
\begin{equation}
\label{eq:3der}
\begin{aligned}
\DDer_S&(A) \otimes A \xrightarrow{\sim} \Der_S(A,A^{\otimes3} )  \\
& \partial \otimes a \longmapsto \Psi_{\partial \otimes a} (b) = \partial(b)' \otimes a \otimes \partial(b)''\, ,\end{aligned}
\end{equation}
where we use the (sumless) Sweedler notation for $\partial(b) = \partial(b)'\otimes \partial(b)''$. This follows from the identification between derivations and bimodules~\eqref{eq:univadj} and the finitely generatedness condition. Because of the form of~\eqref{eq:3der}, if we start from a triple derivation $\Psi \in \Der_S(A,A^{\otimes3})$ and we compose it with either $\tau_{(23)}$ or $\tau_{(12)}$ we obtain, respectively:
\begin{equation}
\label{eq:3derswap}
\begin{aligned}
\tau_{(23)} \circ \Psi  \in \DDer_S(A) \otimes A \, ,\\
\tau_{(12)} \circ \Psi \in A \otimes \DDer_S(A)\, .
\end{aligned}
\end{equation}
\begin{Prop} [{\cite[\S3.2]{VdB}}]
Let $\delta,\partial \in \DDer_S(A)$. Then 
\begin{equation}
\label{eq:dPlr}
\begin{aligned}
&\ldb \delta, \partial \rdb_l^\sim:= (\delta \otimes 1_A) \circ \partial - (1_A\otimes \partial) \circ \delta \,,\\
&\ldb \delta,\partial \rdb_r^\sim := (1_A \otimes \delta)\circ \partial - (\partial \otimes 1_A)\circ \delta = - \ldb \partial,\delta\rdb_l^\sim\,,
\end{aligned}
\end{equation}
define elements of $\Der_S(A,A^{\otimes 3})$.
\end{Prop}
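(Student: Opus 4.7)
The plan is to verify that $\ldb \delta, \partial \rdb_l^\sim$ satisfies the Leibniz rule with respect to the outer $A$-bimodule structure on $A^{\otimes 3}$ given by $a \cdot (x \otimes y \otimes z) \cdot b = ax \otimes y \otimes zb$, since the $S$-linearity is automatic from the $S$-linearity of $\delta$ and $\partial$. The statement for $\ldb \delta, \partial \rdb_r^\sim$ is then immediate from the identity $\ldb \delta, \partial \rdb_r^\sim = -\ldb \partial, \delta \rdb_l^\sim$ already recorded in~\eqref{eq:dPlr}.

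First I would expand $(\delta \otimes 1_A) \circ \partial$ on a product $ab$. Using sumless Sweedler notation $\partial(x) = \partial(x)' \otimes \partial(x)''$, the outer derivation property of $\partial$ gives $\partial(ab) = \partial(a)' \otimes \partial(a)''b + a\partial(b)' \otimes \partial(b)''$. After applying $\delta \otimes 1_A$ and expanding $\delta(a \cdot \partial(b)') = \delta(a) \cdot \partial(b)' + a \cdot \delta(\partial(b)')$ via the outer derivation property of $\delta$, I expect to obtain
\begin{equation*}
((\delta \otimes 1_A) \circ \partial)(ab) = ((\delta \otimes 1_A) \circ \partial)(a) \cdot b \;+\; a \cdot ((\delta \otimes 1_A) \circ \partial)(b) \;+\; \delta(a)' \otimes \delta(a)''\partial(b)' \otimes \partial(b)''.
\end{equation*}

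Next I would carry out the analogous expansion for $(1_A \otimes \partial) \circ \delta$: splitting $\delta(ab)$ by the derivation property of $\delta$, and then splitting $\partial(\delta(a)'' \cdot b)$ by the derivation property of $\partial$, I expect to find
\begin{equation*}
((1_A \otimes \partial) \circ \delta)(ab) = ((1_A \otimes \partial) \circ \delta)(a) \cdot b \;+\; a \cdot ((1_A \otimes \partial) \circ \delta)(b) \;+\; \delta(a)' \otimes \delta(a)''\partial(b)' \otimes \partial(b)'',
\end{equation*}
with \emph{exactly the same} spurious cross term as before. Subtracting the two identities, the cross terms cancel and the Leibniz rule for $\ldb \delta, \partial \rdb_l^\sim$ drops out.

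The whole argument is a bookkeeping exercise; there is no genuine obstacle. The one point worth flagging is the conceptual content of the calculation: neither $(\delta \otimes 1_A) \circ \partial$ nor $(1_A \otimes \partial) \circ \delta$ is on its own a derivation into $A^{\otimes 3}$, but both fail to be derivations by the same cross term $\delta(a)' \otimes \delta(a)''\partial(b)' \otimes \partial(b)''$, so forming their difference is precisely what is needed to cure the defect. This observation is the real reason why $\ldb -,- \rdb_l^\sim$ and $\ldb -,- \rdb_r^\sim$ are defined as commutators rather than as single compositions.
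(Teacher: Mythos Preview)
The paper does not supply its own proof of this Proposition; it simply records the statement and cites \cite[\S3.2]{VdB}. Your direct verification is correct and is exactly the standard argument: each of the two compositions fails the Leibniz rule by the same cross term $\delta(a)'\otimes\delta(a)''\partial(b)'\otimes\partial(b)''$, and their difference is therefore an honest derivation into $A^{\otimes 3}$ for the outer bimodule structure. Since the ambient algebra $A$ here is ungraded (we are in the setting of \S3.1--3.2, $A\in\Alg_S$), there are no Koszul signs to track, and the computation you outline goes through verbatim.
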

When we compose them with the above-mentioned permutations (\eqref{eq:3derswap}) we can view them as elements of 
\begin{equation}
\label{eq:dPlrfinal}
\begin{aligned}
&\ldb \delta,\partial \rdb_l := \tau_{(23)} \circ \ldb \delta,\partial \rdb_r^\sim \in \DDer_S(A)\otimes A \subset (T^\ast A)^{\otimes2}\, ,\\
&\ldb \delta, \partial \rdb_r:= \tau_{(12)} \circ \ldb \delta,\partial \rdb_r^\sim \in A \otimes \DDer_S(A) \subset (T^\ast A)^{\otimes2}\, ,\\
\end{aligned}
\end{equation}
and they are related one to the other by swapping the first and the second component: $\ldb \delta,\partial\rdb_r = -\ldb \partial,\delta\rdb_l^\circ$.

\begin{Thm}[{\cite[Theorem 3.2.2]{VdB}}] 
\label{thm:SN}
There is a natural $(-1)$-shifted graded double Poisson structure (over $S$) on the noncommutative cotangent bundle $T^\ast_\odd A $ of a finitely generated algebra $A$, called double Schouten-Nijenhuis structure. It is defined uniquely by the formulas:
\begin{equation}
\label{eq:dPcot}
\ldb a,b\rdb = 0\,, \quad \ldb \delta, a \rdb = \delta(a)\, ,\quad \ldb \delta ,\partial \rdb =\ldb \delta,\partial\rdb_l +\ldb \delta,\partial\rdb_r  \,,
\end{equation}
for $a,b\in A$, $\delta,\partial \in \DDer_S(A)$.
\end{Thm}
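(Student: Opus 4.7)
My plan is to verify, in order, the three axioms of a $(-1)$-shifted double bracket from Definition~\ref{def:nbracket} --- compatibility with the differential is automatic since $T^\ast_\odd A$ carries no differential --- and then the double Jacobi identity, following closely the argument of \cite[Theorem 3.2.2]{VdB}.

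The first step is to extend the formulas~\eqref{eq:dPcot} from generators to all of $T^\ast_\odd A$. Cyclic antisymmetry forces $\ldb a,\delta\rdb$ to be (up to the shifted sign) $-\delta(a)^\circ$, and the derivation property in the second argument then determines the bracket on all products uniquely. To see that the extension is consistent --- i.e.\ that it is simultaneously a derivation in the first slot for the inner bimodule structure, as required by ($2^\ast$) in~\S\ref{2.2} --- I would test derivation in the first slot on pairs of generators; this is automatic from the very definitions~\eqref{eq:dPlrfinal}, which express $\ldb\delta,\partial\rdb_{l,r}$ as shuffled compositions of derivations. Cyclic antisymmetry on generators then reduces to the identity $\ldb\delta,\partial\rdb_r = -\ldb\partial,\delta\rdb_l^\circ$ recorded after~\eqref{eq:dPlrfinal}, together with its companion $\ldb\delta,\partial\rdb_l = -\ldb\partial,\delta\rdb_r^\circ$ obtained by swapping the roles of $\delta$ and $\partial$; summing them yields $\ldb\delta,\partial\rdb = -\ldb\partial,\delta\rdb^\circ$, which is the shifted cyclic antisymmetry (both generators being odd).

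The main obstacle is the double Jacobi identity $\ldb-,-,-\rdb = 0$, which by the derivation property in each slot need only be tested on triples of generators. Triples with at most one double derivation are trivial or reduce to the fact that $\delta$ is itself a derivation. The mixed case $(\delta,\partial,a)$ unfolds via~\eqref{eq:triple} using the derivation property of $\ldb\delta,-\rdb$ applied to $\ldb\partial,a\rdb = \partial(a) \in A \otimes A$, and the three cyclic contributions cancel in pairs. The genuinely substantial case is $(\delta,\partial,\eta)$: expanding $\ldb\delta,\ldb\partial,\eta\rdb_l + \ldb\partial,\eta\rdb_r\rdb_L$ via~\eqref{eq:dPlrfinal}--\eqref{eq:tripleL} produces terms built from the three triple compositions of $\delta,\partial,\eta$ inside $\Der_S(A,A^{\otimes 3})$, interwoven with the permutations $\tau_{(12)}$ and $\tau_{(23)}$. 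Summing with the cyclic sign pattern of~\eqref{eq:tripleab} yields a telescoping cancellation; this is exactly Van den Bergh's calculation, which I would redo while carefully tracking the Koszul signs produced by the $(-1)$-shift.

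Finally, for the unshifted variant $T^\ast A$ alluded to in Remark~\ref{rem:shiftedcot} --- which is the structure actually used in the remainder of the paper, and comes with an overall sign flip on $\ldb\delta,\partial\rdb$ --- the same argument applies verbatim with all Koszul signs set to $+1$, yielding a genuine $0$-shifted double Poisson bracket. Uniqueness in both cases is automatic: once $\ldb-,-\rdb$ is specified on the generators of $T^\ast_\odd A$ as a tensor algebra over $A$, the derivation and cyclic antisymmetry axioms propagate it to the whole algebra.
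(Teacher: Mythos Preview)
The paper does not give its own proof of this theorem: it is stated with attribution to \cite[Theorem 3.2.2]{VdB} and immediately followed by Proposition~\ref{prop:0shift}, which \emph{uses} it to deduce the $0$-shifted structure on $T^\ast A$. So there is nothing to compare against; your sketch is essentially an outline of Van den Bergh's original argument, which is exactly what the citation points to.

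That said, a small remark on scope: your final paragraph about the unshifted variant is not part of Theorem~\ref{thm:SN} but rather of the subsequent Proposition~\ref{prop:0shift}, and the paper's argument there is slightly different from yours. Rather than rerunning the Jacobi computation with trivial signs, the paper observes abstractly that for a bimodule $\D$ the axioms to be checked on generators of $T_A(\D[1])$ and $T_A\D$ involve only elements of degree $0$ and $1$, so the sign patterns coincide; hence the $(-1)$-shifted result on $T^\ast_\odd A$ transfers directly to a $0$-shifted structure on $T^\ast A$ without redoing any computation.
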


\begin{Prop}
\label{prop:0shift}
The same definitions on the generators give a $0$-shifted double Poisson structure on the ungraded cotangent bundle $T^\ast A$, which --- with a minus --- we call the ``natural'' double Poisson structure:
\begin{equation}
\label{eq:dPcoteven}
\ldb a,b \rdb =0\,, \quad \ldb \delta, a \rdb =- \delta(a)\,, \quad \ldb \delta ,\partial \rdb= -\ldb \delta,\partial\rdb_l - \ldb \delta,\partial\rdb_r\,.
\end{equation}
\end{Prop}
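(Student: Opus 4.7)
The plan is to verify directly that the formulas \eqref{eq:dPcoteven} define an honest double Poisson structure on $T^\ast A$, following the blueprint of Van den Bergh's proof of Theorem~\ref{thm:SN} but with all Koszul signs collapsed to $+1$ since every generator now sits in degree zero. The overall minus sign in \eqref{eq:dPcoteven} is precisely what compensates for the signs that trivialize upon unshifting.

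First I would extend the bracket from generators to all of $T^\ast A$ using the Leibniz rules (1) and $(2^\ast)$. Since $T^\ast A = T_A(\DDer_S(A))$ is generated over $k$ by $A$ and $\DDer_S(A)$, the bracket is determined on generators, and the consistency of the extension (i.e.\ that iterated applications of the Leibniz rules never contradict one another) is exactly what Van den Bergh checks in the shifted case; the verification goes through verbatim in our setting because it is purely a bimodule-theoretic statement about the tensor algebra, independent of signs.

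Next I would check cyclic antisymmetry on the three generating cases. The case $\ldb a,b\rdb=0$ is trivial. For $\ldb a,\delta\rdb$ one \emph{defines} it as $\delta(a)^\circ = -\ldb\delta,a\rdb^\circ$ so that property (2) holds by construction. For $\ldb\delta,\partial\rdb$ the identity $\ldb\delta,\partial\rdb_r = -\ldb\partial,\delta\rdb_l^\circ$, immediate from \eqref{eq:dPlr} and \eqref{eq:dPlrfinal}, yields the short calculation
\[
-\ldb\partial,\delta\rdb^\circ = \ldb\partial,\delta\rdb_l^\circ + \ldb\partial,\delta\rdb_r^\circ = -\ldb\delta,\partial\rdb_r - \ldb\delta,\partial\rdb_l = \ldb\delta,\partial\rdb,
\]
as required.

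Finally, the double Jacobi identity $\ldb-,-,-\rdb=0$ reduces, by multilinearity and the derivation properties, to finitely many cases on generators: the case $(a,b,c)\in A^{\otimes 3}$ is trivial since every constituent bracket vanishes; the case $(\delta,a,b)$ reduces to the Leibniz rule $\delta(ab)=\delta(a)\cdot b + a\cdot\delta(b)$; the case $(\delta,\partial,a)$ is a short combinatorial expansion; and the case $(\delta,\partial,\rho)$ is the genuine content. The hard part will be this last case. Van den Bergh's proof in the shifted setting manipulates the iterated compositions $(\delta\otimes 1)\circ\partial$, $(1\otimes\delta)\circ\partial$, etc., using the Koszul signs produced by $|\delta|_{\mathrm{shifted}}=1$; when every generator has degree zero those signs become $+1$, and the overall minus sign in \eqref{eq:dPcoteven} is exactly what is needed so that the three cyclic summands appearing in \eqref{eq:tripleab} cancel. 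The check is therefore a finite, mechanical bookkeeping exercise that imports Van den Bergh's identities without any new algebraic input; the only delicate point is to confirm that the single minus sign, rather than a sign depending on which of $\ldb-,-\rdb_l$ or $\ldb-,-\rdb_r$ is used, is the correct rescaling to make the identity close.
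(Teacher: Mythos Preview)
Your approach is correct and would work, but it is more laborious than the paper's argument and contains a minor misconception about the role of the overall minus sign.

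The paper does not re-run Van den Bergh's verification in the ungraded setting. Instead it makes a single abstract observation: for a general $A$-bimodule $\D$, the axioms that a double bracket on $T_A(\D[1])$ (with generators in degrees $0$ and $1$, and bracket $(-1)$-shifted) must satisfy on generators carry \emph{exactly the same signs} as the axioms that a $0$-shifted bracket on $T_A\D$ (all generators in degree $0$) must satisfy. This is because the only generator cases to test involve elements of degree $0$ and $1$, and in those specific degree combinations the shifted and unshifted sign conventions happen to agree. Hence Van den Bergh's Theorem~\ref{thm:SN} transfers immediately, with no new computation. Your proposal instead re-verifies antisymmetry and Jacobi case by case; this is valid, but the paper's transfer argument is both shorter and explains conceptually why the same formulas work.

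On the minus sign: your claim that it ``is precisely what compensates for the signs that trivialize upon unshifting'' and ``is exactly what is needed so that the three cyclic summands \dots\ cancel'' is not right. The paper first shows that the \emph{positive} formulas $\ldb\delta,a\rdb=\delta(a)$, $\ldb\delta,\partial\rdb=\ldb\delta,\partial\rdb_l+\ldb\delta,\partial\rdb_r$ already define a double Poisson structure on $T^\ast A$, and only afterwards flips the overall sign as a convention (``which --- with a minus --- we call the `natural' double Poisson structure''). The triple bracket is quadratic in the double bracket, so a global sign change leaves the Jacobi identity untouched; the minus plays no role in closing the axioms.
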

\begin{proof} 
Let us consider the following general situation. Let $A\in \Alg_S$ and a bimodule $\D\in A-\Bimod$ consider its tensor algebra in the two following versions: odd $T_A ( \D[1])$ and even $T_A\D$. A $-1$-shifted double Poisson bracket on $T_A(\D[1])$ is uniquely determined by giving:
\[
\ldb a,b\rdb = 0\,,\quad \ldb \delta, a \rdb \in A^{\otimes 2}\,,\quad \ldb \delta,\partial \rdb \in \D\otimes A + A \otimes \D\, ,
\]
for $a,b\in A$, $\delta,\partial \in \D$ with the following properties:
\begin{enumerate}
\item $ \ldb \delta ,- \rdb : A \to A^{\otimes 2}$ is a double ($S$-linear) derivation in the second argument and it is compatible with the $A$-bimodule structure in the first argument.
\item $ \ldb \delta ,\partial \rdb = - \ldb \partial , \delta\rdb^\circ$ for each $\delta,\partial \in \D$.
\item The double Jacobi identity is satisfied on the generators, and this can be tested only in the following two situations:
\begin{itemize}
\item [(i)] two elements of $\D$ and one of $A$:
\[
\ldb \delta, \partial, a \rdb= \ldb \delta, \ldb \partial, a \rdb \rdb_L + \tau_{(123)}  \ldb \partial, \ldb a, \delta \rdb \rdb_L + \tau_{(132)} \ldb a , \ldb \delta, \partial \rdb \rdb_L=0\,.
\]
\item [(ii)] three elements of $\D$:
\[
\ldb \delta, \partial, d \rdb= \ldb \delta, \ldb \partial, d \rdb \rdb_L + \tau_{(123)}  \ldb \partial, \ldb d, \delta \rdb \rdb_L + \tau_{(132)} \ldb d , \ldb \delta, \partial \rdb \rdb_L=0\,.
\]
\end{itemize}
\end{enumerate}
The crucial observation is that the signs involved in the properties to be tested on the generators are the same as in the ungraded case (essentially because we only need to test them on particular choices of elements of degree zero and one). Therefore the same definitions actually give a ($0$-shifted) double Poisson structure on $T_A \D$ which coincides with the $-1$-shifted on $T_A (\D[1])$ on the generators. Now we apply this in the case of $\D = \DDer_S(A)$ with the natural $A$-bimodule structures and definitions~\eqref{eq:dPcot} which, by \cite[Theorem 3.2.2]{VdB}, define a $-1$-shifted double Poisson structure on $T^\ast_\odd A$.
\end{proof}


\subsection{Noncommutative Hamiltonian spaces}
\label{3.3}

In this Section we restrict to the case $S=kI$, a finite dimensional algebra made of orthogonal idempotents. We want to defined a noncommutative version of Hamiltonian spaces, and in order to do so we should first introduce a noncommutative version of the Lie algebra $\gl_\nn$.

We consider the path algebra of the quiver with vertices $I$ and with simply one loop $t_i$ on each vertex $i\in I$. It is the tensor algebra over $S$ of the $S$-bimodule $L=\Span_k\{t_i\}$ (isomorphic to $S$ itself, but obviously we call its basis elements with different names to distinguish them from the orthogonal idempotents $e_i \in I$ in the path algebra):
\begin{equation}
\label{eq:L}
T_S (L) =  \mathrm{PathAlg}_k \left( \begin{tikzcd} 1\arrow[out=0,in=90,loop,"t_1"] & 2\arrow[out=0,in=90,loop,"t_2"]  &\cdots  &{|I|}\arrow[out=0,in=90,loop,"t_{|I|}"]   \end{tikzcd} \right) .
\end{equation}
It has a natural double Poisson structure defined uniquely by \begin{equation}
\label{eq:k[t]}
\ldb t_i ,t_j\rdb =\delta_{ij} (  t_i \otimes e_i -e_i \otimes t_i )\, .
\end{equation}
The verification that~\eqref{eq:k[t]} actually defines a double Poisson is straightforward. 

As explained in the introduction the role of $T_S(L)$ in the noncommutative world is analogous to the one played by the Lie algebra of the gauge group $\gl_\nn =\Lie(\GL_\nn)$ acting on representation schemes (in fact the standard Poisson structure on $\Sym(\gl_\nn)$ is the one induced by the above-mentioned double Poisson structure). Any double Poisson algebra $A \in \PPA_S$ with a map $T_S(L) \to A$, has an induced map $\Sym(\gl_\nn) \to A_\nn$ via the representation functor, which is a Poisson morphism guaranteed that the map $T_S(L) \to A $ is a map of double Poisson algebra. In order to obtain a Hamiltonian space $A_\nn$, we need a compatibility condition between the $\gl_\nn$-action and the induced Poisson bracket on $A_\nn$, and this is given by the condition~\eqref{eq:6*}.

\begin{Defn}
\label{def:ncHamspa}
Let $A \in \PPA_S$ a double Poisson algebra over $S$. A \emph{noncommutative Hamiltonian action} on it is a morphism $f: T_S(L) \to A$ of double Poisson algebras, such that, for each $i \in I$ and for each $a \in A$:
\begin{equation}
\label{eq:hamilton}
\ldb f(t_i), a \rdb = a e_i \otimes e_i -e_i \otimes e_i a\, (=\Delta_i(a))\, .
\end{equation}
We call such a double Poisson $A$ equipped with a noncommutative Hamiltonian action a \emph{noncommutative Hamiltonian space}. 
\end{Defn}

\begin{Remark} This definition agrees with \cite[Definition 2.6.4.]{VdB} of \emph{Hamiltonian algebra}. The map $T_S(L) \to A$ is also called a \emph{noncommutative moment map}.
\end{Remark}

\begin{Defn} 
A \emph{morphism of noncommutative Hamiltonian spaces} is just a morphism of double Poisson algebras $A \to B$ compatible with the structure maps $T_S(L) \to A,B$. We denote the category of noncommutative Hamiltonian spaces by $\PPA_{T_S(L)}^H$.
\end{Defn}

\begin{Remark}
In other words the category of noncommutative Hamiltonian spaces can be seen as a full subcategory of the under category $\PPA_{T_S(L)}^H\subset \DGPPA_{T_S(L)}:=T_S(L)\downarrow \DGPPA_S$ (with an abuse of notation with respect to the previous notation $\DGPPA_S$ which was \emph{not} the under category of $\DGPPA_k$ with respect to $S$).
\end{Remark}

\begin{Notation}
From now on we will often denote the images of the loops under a Hamiltonian action $T_S(L) \to A$ by the symbols:
\begin{equation}
t_i \longmapsto f(t_i) =\delta_i  \in A\,,
\end{equation}
where the choice of the letter (small) `delta' is motivated by the fact that their Poisson brackets on $A$ give the action of the (big) deltas, the components of the chosen lifting of the universal derivation:
\begin{equation}
\ldb \delta_i , a \rdb =ae_i\otimes e_i - e_i \otimes e_i a = \Delta_i(a)\,.
\end{equation}
We also denote their sum by $\delta= \sum_i \delta_i$ and observe that a Hamiltonian action is nothing else but a choice of a diagonal element $\delta \in \oplus_i e_i A e_i$ such that
\begin{equation}
\label{eq:Hamiltoniandelta}
\ldb \delta, a \rdb = \sum_i ae_i\otimes e_i - e_i \otimes e_i a= \Delta(a)\, .
\end{equation}
The elements $\{\delta_i\}$ play the role of \emph{noncommutative gauge elements}.
\end{Notation}

If we consider the two-sided ideal generated by the image of the Hamiltonian action $\mathcal{I}=\langle \delta \rangle \subset A$, we obtain as a quotient the noncommutative analogue of the zero locus of the Hamiltonian map: $A/\mathcal{I}$. We can also view it as the coproduct $A/\mathcal{I} = A \amalg_{T_S(L)} S  \in \Alg_{T_S(L)}$, where $S$ is viewed as a $T_S(L)$-algebra through the obvious projection $T_S(L)\to S$ that sends $L$ to zero. 

\begin{Defn}
The \emph{noncommutative zero locus} of a Hamiltonian space $A \in \PPA_{T_S(L)}^H$ is the algebra $A/\mathcal{I}= A \amalg_{T_S(L)} S \in \Alg_{T_S(L)}$.
\end{Defn}

\begin{Remark}[Poisson structure on the zero locus]
\label{rem:PS}
As for the Poisson structure, the double Poisson structure on $A$ does not descend on the zero locus, however it is easy to verify that the following formula defines an induced $\HH_0$-Poisson structure on $A/\mathcal{I}$:
\[
\begin{aligned}
&\left( A\slash \mathcal{I} \right)_\n^{\otimes 2} \xrightarrow{\{-,-\}_{\n}} \left(A \slash \mathcal{I} \right)_\n\\
&\overline{a + \mathcal{I}} \otimes \overline{b +\mathcal{I}} \longmapsto \overline{ \{a,b\} + \mathcal{I}}
\end{aligned}
\]
where $\overline{a + \mathcal{I}} \in (A\slash \mathcal{I})_\n$ denotes the class modulo commutator of the element $a + \mathcal{I} \in A \slash\mathcal{I}$, and for $a,b \in A$, $\{a,b\} \in A$ is the single bracket associated to the double Poisson structure on $A$.
\end{Remark}

Following the construction done in the commutative setting we define a derived version of the zero locus by using the total left-derived functor of the coproduct (we refer the interested reader to Appendix~\ref{app:A} for explanations on the derived coproduct).

\begin{Defn}
The \emph{derived noncommutative zero locus} of a Hamiltonian space $A \in \PPA_{T_S(L)}$ is the (homotopy class of the) dg algebra
\begin{equation}
\label{eq:derzero}
A \amalg_{T_S(L)}^\L S \in \Ho(\DGA_{T_S(L)})\, .
\end{equation}
\end{Defn}

In what follows however, we will often consider a specific model for it, obtained by choosing the following cofibrant replacement of $S$ in the category $\DGA_{T_S(L)}$ (see Appendix~\ref{app:A} on why we can replace only one and not both variables of the coproduct) as the Shafarevich complex:
\[
T_S(L) \cofi T_S(L \oplus L[1])  \acfib S \,,
\]
where the differential in $T_S(L\oplus L[1])$ is $d\vartheta_i = t_i$ ($\vartheta_i \in L[1]$ basis). 
\begin{Lem}
\label{eq:tslcofi}
The projection map $T_S(L\oplus L[1]) \to S$ is a quasi-isomorphism.
\end{Lem}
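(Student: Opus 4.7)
The strategy is to exploit the tensor-length grading on $T_S(L\oplus L[1])$ and reduce the statement to a single contracting-homotopy computation on the generating $S$-bimodule $L\oplus L[1]$.

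First I would decompose $T_S(L\oplus L[1])=\bigoplus_{n\geq 0}(L\oplus L[1])^{\otimes_S n}$ as a graded $S$-bimodule (with the convention that the $n=0$ summand is $S$). Since the derivation $d$ is determined by $d\vartheta_i=t_i$ and $dt_i=0$, and therefore replaces a single generator by a single generator, it preserves the tensor-length grading. Hence every $(L\oplus L[1])^{\otimes_S n}$ is a subcomplex, and the augmentation $T_S(L\oplus L[1])\twoheadrightarrow S$ is exactly the projection onto the $n=0$ summand. It therefore suffices to show that every summand with $n\geq 1$ is acyclic.

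Next I would contract the generating complex by hand: the $S$-bilinear map of degree $+1$
\[
h:L\oplus L[1]\to L\oplus L[1],\qquad h(t_i)=\vartheta_i,\quad h(\vartheta_i)=0
\]
is well-defined because $t_i,\vartheta_i\in e_i(L\oplus L[1])e_i$, and satisfies $dh+hd=\id$ by inspection on the basis $\{t_i,\vartheta_i\}$. So $(L\oplus L[1],d)$ is contractible as a complex of $S$-bimodules.

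Finally, I would propagate $h$ to each tensor power: the degree $+1$ map $h\otimes_S\id_{(L\oplus L[1])^{\otimes_S (n-1)}}$ acting on the first tensor factor is well-defined by $S$-bilinearity of $h$, and the standard sign calculation gives $d(h\otimes 1)+(h\otimes 1)d=(dh+hd)\otimes 1=\id$ on $(L\oplus L[1])^{\otimes_S n}$, the cross terms cancelling by the graded Leibniz rule. Hence each $(L\oplus L[1])^{\otimes_S n}$ with $n\geq 1$ is contractible, and the only surviving homology of $T_S(L\oplus L[1])$ is the $n=0$ summand $S$ in degree zero, onto which the projection restricts to the identity. There is no real obstacle: once the tensor-length grading is seen to be preserved by $d$, everything reduces to the elementary homotopy $h$ and the routine cancellation in the tensor-power homotopy.
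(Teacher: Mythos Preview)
Your proof is correct and uses essentially the same idea as the paper's: the tensor-length grading together with the degree $+1$ map $h$ on generators given by $h(t_i)=\vartheta_i$, $h(\vartheta_i)=0$. The only packaging difference is that you extend $h$ to $(L\oplus L[1])^{\otimes_S n}$ by acting on the \emph{first} tensor factor, obtaining $dH+Hd=\id$ on each positive tensor power, whereas the paper extends $h$ as a super-derivation on the whole tensor algebra, obtaining $dh+hd=l$ (the length operator), and then uses that $l$ is invertible away from $S$. Both are standard and equivalent; your version is marginally cleaner since it lands on the identity directly rather than on a scaling that must then be inverted.
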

\begin{proof}
For any $s \in S$ we denote the corresponding elements in $L, L[1]$ by $t_s, \vartheta_s$, respectively. We define a super derivation $h: T_S(L\oplus L[1])_\bullet \to T_S(L\oplus L[1])_{\bullet + 1}$ on the generators of the algebra by:
\[
h(s)=0\,, \quad h(t_s) = \vartheta_s\,, \quad h(\vartheta_s ) =0\,.
\]
Then $h$ is an homotopy between the zero map and the `length' map $l$:
\[
dh + hd = l\,,
\]
where on elementary monomial words $w \in T_S(L\oplus L[1])$, $l(w) = \#(w) w$ is the word itself multiplied by its length (in the natural grading give by the tensor algebra, so it is counting only the number of $t$ and $\vartheta$ in the word $w$). The map $l$ is an isomorphism in homological degrees $\geq 1$, and this proves that $\HH_{\geq 1} (T_S(L\oplus L[1])) = 0$.
\end{proof}

Thus a model for the noncommutative zero locus is a sort of generalised Shafarevich complex 
\begin{equation}
\label{eq:Shaf}
A \amalg_{T_S(L)}^\L S \cong A \amalg_{T_S(L)} T_S(L\oplus L[1]) \cong A \amalg_S T_S(L[1])\,,\quad d\vartheta_i = \delta_i\,.
\end{equation}
\begin{Notation} We denote this complex by $\Sh(A):=A \amalg_S T_S(L[1])$.
\end{Notation}

\begin{Remark}
The zero-th homology of the derived zero locus recovers the underived zero locus
\[
\HH_0 \left( \Sh(A) \right) \cong A \amalg_{T_S(L)} S\, . 
\]
If the higher homologies vanish we could say that the couple $(A,\mathcal{J})$ is a generalised \emph{noncommutative complete intersection} (see \cite{BFR,EG} for the case of noncommutative complete intersection of the form $A=T_kV$ a tensor algebra). Suppose that this is true and, in addition, the structure map $S \cofi A$ is a cofibration. In this case the Shafarevich complex is a cofibrant resolution of the underived zero locus $A/\mathcal{J}$ in the category $\DGA_S$ and it can be used to compute the derived representation functor at the level $\L(-)_\nn : \Ho(\DGA_S) \to \Ho(\CDGA_k)$,
\begin{equation}
\label{eq:drep}
\L( A/\mathcal{J})_\nn \cong  \left(A\amalg_S T_S(L[1])\right)_\nn = A_\nn \otimes_k \Sym(\gl_\nn [1])\, .\end{equation}
We direct the interested reader to \cite{BFR,BKR} for details on the derived representation scheme and to \cite{DA} for details on its relationship with the Koszul complex. In particular \cite{DA} contains the proof of~\eqref{eq:drep} for partial preprojective algebras corresponding geometrically to Nakajima quiver varieties.
\end{Remark}

One particular class of examples of Hamiltonian spaces are cotangent bundles: $T^\ast A= T_A \DDer_S(A)$. In~\eqref{eq:distdelta} we defined the distinguished double derivation $\Delta \in \DDer_S(A)$ which is our preferred lifting of the universal derivation $d:A\to \Omega_S^1 A$. The elements obtained by decomposing it as a direct sum of its $I$-graded components~\eqref{eq:deltai}:
\begin{equation}
\Delta_i = e_i \Delta e_i \in T^\ast A\, ,
\end{equation} 
are the natural candidate as gauge elements (with a minus sign):

\begin{Lem}
\label{lem:ncmom}
With the double Poisson structure on $T^\ast A$ given in Proposition~\ref{prop:0shift}, the map
\begin{equation}
\label{eq:D}
\begin{aligned}
 &T_S(L) \to T^\ast A\\
 &t_i \longmapsto \delta_i := - \Delta_i
 \end{aligned}
\end{equation}
is a noncommutative Hamiltonian action.
\end{Lem}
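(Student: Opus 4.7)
The claim has three ingredients to verify: (a) that the assignment $e_i\mapsto e_i$, $t_i\mapsto -\Delta_i$ is a well-defined $S$-algebra morphism $f:T_S(L)\to T^\ast A$; (b) that the Hamiltonian condition~\eqref{eq:hamilton} holds; (c) that $f$ is a morphism of double Poisson algebras. My plan is to dispatch (a) and (b) quickly and then to reduce (c) to a single direct calculation on generators.

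For (a), since $T_S(L)$ is freely generated over $S$ by loops satisfying $e_j t_i e_k=\delta_{ij}\delta_{ik}t_i$, it suffices to check the analogous identity $e_j\cdot\Delta_i\cdot e_k=\delta_{ij}\delta_{ik}\Delta_i$ for the $A$-bimodule structure on $\DDer_S(A)\subset T^\ast A$ (the one induced by the inner bimodule structure on $A\otimes A$). This is immediate from $\Delta_i=e_i\Delta e_i$ (which is already stated at~\eqref{eq:deltai}, and follows from $e_me_k=\delta_{mk}e_k$ in a one-line calculation). For (b), Proposition~\ref{prop:0shift} gives $\ldb \delta,a\rdb=-\delta(a)$ for every $\delta\in\DDer_S(A)$, $a\in A$, whence
\[
\ldb f(t_i),a\rdb=\ldb -\Delta_i,a\rdb=\Delta_i(a)=ae_i\otimes e_i-e_i\otimes e_ia,
\]
exactly as required by Definition~\ref{def:ncHamspa}.

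For (c), because both $\ldb-,-\rdb_{T_S(L)}$ and $\ldb-,-\rdb_{T^\ast A}$ are bi-derivations and both are $S$-linear, it suffices to check compatibility on pairs of generators $t_i,t_j$. The brackets involving an idempotent vanish on either side by $S$-linearity, and the only identity left to verify is
\[
\ldb -\Delta_i,-\Delta_j\rdb_{T^\ast A}\;=\;f^{\otimes 2}\ldb t_i,t_j\rdb_{T_S(L)}\;=\;\delta_{ij}\bigl(-\Delta_i\otimes e_i+e_i\otimes\Delta_i\bigr).
\]
By Proposition~\ref{prop:0shift} the left-hand side equals $-\ldb\Delta_i,\Delta_j\rdb_l-\ldb\Delta_i,\Delta_j\rdb_r$, so the task is to compute $\ldb\Delta_i,\Delta_j\rdb_l^\sim$ and $\ldb\Delta_i,\Delta_j\rdb_r^\sim$ from the definitions~\eqref{eq:dPlr} and then apply $\tau_{(23)}$, $\tau_{(12)}$ as in~\eqref{eq:dPlrfinal}.

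The main (and only) real work is this last triple-tensor computation, which I expect to be routine but sign-sensitive. Evaluating $(\Delta_i\otimes 1)\Delta_j(a)-(1\otimes\Delta_j)\Delta_i(a)$ on an arbitrary $a\in A$ using $\Delta_i(a)=ae_i\otimes e_i-e_i\otimes e_ia$ and the identities $e_me_n=\delta_{mn}e_m$, all cross-terms cancel when $i\neq j$ (so $\ldb\Delta_i,\Delta_j\rdb=0$), while in the diagonal case $i=j$ the surviving terms $ae_i\otimes e_i\otimes e_i-e_i\otimes e_i\otimes e_ia$ are precisely of the form $\partial(a)'\otimes a_0\otimes\partial(a)''$ with $\partial=\Delta_i$ and $a_0=e_i$. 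Under the identifications of~\eqref{eq:3derswap} this reads $\ldb\Delta_i,\Delta_i\rdb_l=\Delta_i\otimes e_i$ and $\ldb\Delta_i,\Delta_i\rdb_r=-e_i\otimes\Delta_i$, so summing with the minus signs from Proposition~\ref{prop:0shift} gives exactly $-\Delta_i\otimes e_i+e_i\otimes\Delta_i$, concluding the proof. The expected obstacle is bookkeeping in this triple-tensor computation rather than anything conceptual.
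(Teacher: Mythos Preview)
Your proposal has a genuine gap in part~(b). In Definition~\ref{def:ncHamspa} the Hamiltonian condition $\ldb f(t_i),a\rdb=\Delta_i(a)$ must hold for every element $a$ of the \emph{target} double Poisson algebra, which in this lemma is $T^\ast A$, not the base algebra $A$. (The clash of notation---$A$ denotes the Hamiltonian space in the definition but the base algebra in the lemma---seems to have misled you.) Since both sides of the identity are derivations on $T^\ast A$, it suffices to verify it on generators of $T^\ast A$: elements of $A$ \emph{and} arbitrary double derivations $\partial\in\DDer_S(A)$. Your~(b) handles only the first type; the identity
\[
\ldb -\Delta_i,\partial\rdb_{T^\ast A}\;=\;\partial e_i\otimes e_i-e_i\otimes e_i\partial
\]
for a general $\partial$ is never checked. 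Your computation in~(c) only treats the special case $\partial=\Delta_j$, which is not enough; conversely, once the Hamiltonian condition is known for all $\omega\in T^\ast A$, your~(c) follows for free by taking $\omega=-\Delta_j$, so~(c) is in fact redundant rather than a substitute.

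The paper's proof is organised precisely around this missing step: it reduces to the two types of generators and, for the harder case of an arbitrary $\partial\in\DDer_S(A)$, invokes \cite[Proposition~3.3.1]{VdB}. To repair your argument you must carry out (or cite) that computation: evaluate $\ldb\Delta_i,\partial\rdb_l^\sim$ and $\ldb\Delta_i,\partial\rdb_r^\sim$ from~\eqref{eq:dPlr} for arbitrary $\partial$, using $\Delta_i(a)=ae_i\otimes e_i-e_i\otimes e_ia$, and identify the result under~\eqref{eq:3derswap} with $\partial e_i\otimes e_i-e_i\otimes e_i\partial$. The bookkeeping is similar to what you did for $\partial=\Delta_j$, but now the second derivation is unknown and the cancellations are less immediate.
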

\begin{proof}
We need to show that for any $i \in I$ and any $\omega \in T^\ast A$, we have
\[
\ldb \delta_i, \omega \rdb = \omega e_i \otimes e_i - e_i \otimes e_i \omega\,,
\]
or equivalently, denoting by $\ldb -,-\rdb_\odd$ the double Schouten-Nijunhuis bracket on $T^\ast_\odd A$, that:
\begin{equation}
\label{eq:dpta}
\ldb \Delta_i, \omega \rdb_\odd = \omega e_i \otimes e_i - e_i \otimes e_i \omega\,.
\end{equation}
This can be tested only on the generators of $T^\ast A$, that is either elements $a \in A$ or double derivations $\partial \in \DDer_S(A)$. For elements $a \in A$, equation~\eqref{eq:dpta} is exactly the definition of the double Poisson bracket on $T^\ast_\odd A$. So we only need to prove that
\[
 \ldb \Delta_i,\partial \rdb_{\odd}  = \partial e_i\otimes e_i -e_i \otimes e_i \partial \in (T^\ast A)^{\otimes 2} \,.
\]
This is the content of \cite[Proposition 3.3.1]{VdB}.
\end{proof}


\subsection{Noncommutative Chevalley-Eilenberg and BRST} 
\label{3.4}

In this last Section we define a noncommutative version of the Chevalley-Eilenberg functor and BRST complex needed for the derived Poisson reduction. 

For the moment we start simply from an object in the under category $A \in \DGPPA_{T_S(L)}$ (we do not require the action to be Hamiltonian) and define
\begin{Defn}
The \emph{noncommutative Chevalley-Eilenberg complex} is \begin{equation}
\label{eq:CE}
\CE(A) := A \amalg_{T_S(L)} (T_S(L\oplus L^\ast[-1])) \cong A \amalg_S T_S (L^\ast[-1])\,,
\end{equation}
equipped with the following twisted differential (on the generators $a \in A$ and $\eta_i \in L^\ast[-1]$):
\begin{equation}
\label{eq:dCE}
\begin{cases}
da = d_\old a - [\sum_i \eta_i , a] \,,\\
d \eta_i = -\eta_i^2 =- \frac12 [\eta_i,\eta_i]\,,
\end{cases}
\end{equation}
where $d_\old$ is the original differential on $A$. 
\end{Defn}
In fact, we can write the differential on the Chevalley-Eilenberg complex as the sum of two super-commuting differentials (Lemma~\ref{Lem:d1d2}):
\begin{equation}
\label{eq:d1d2}
d= d_\old + d_\CE\,,
\end{equation}
where $d_\old$ is the old differential on $A$ extended to zero elsewhere and $d_\CE$ is the Chevalley-Eilenberg differential on $T_S(L^\ast[-1])$ ($d_\CE \eta_i =- \eta_i^2$) extended by commutators as indicated in~\eqref{eq:dCE} on $A$.

\begin{Lem}
\label{Lem:d1d2}
The two maps $d_\old$, $d_\CE$ defined in~\eqref{eq:d1d2} are indeed (super-commuting) differentials on $\CE(A)$. This proves that $d=d_\old +d_\CE$ is also a differential.
\end{Lem}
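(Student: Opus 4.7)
The plan is to exploit the universal property of the free product $\CE(A) = A \amalg_S T_S(L^\ast[-1])$: any $S$-linear rule prescribed on algebra generators and extended by the graded Leibniz rule gives a well-defined derivation of $\CE(A)$. I would first verify that both $d_\old$ (the original differential on $A$, extended by zero on $T_S(L^\ast[-1])$) and $d_\CE$ (defined by $d_\CE a = -[\eta, a]$ for $a \in A$ and $d_\CE \eta_i = -\eta_i^2$ for the ghosts, with $\eta := \sum_i \eta_i$) extend uniquely to $S$-linear derivations of degree $-1$ on $\CE(A)$. The $S$-linearity of the prescribed values poses no obstruction to the extension.

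Once this is in place, the key observation is that the graded commutator of two derivations of odd degree is again a derivation. Hence $d_\old^2$, $d_\CE^2$ and $d_\old d_\CE + d_\CE d_\old$ are all derivations of $\CE(A)$ (of degree $-2$), and each of the three vanishing identities can be tested on the generators $a \in A$ and $\eta_i$.

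The identity $d_\old^2 = 0$ is immediate: on $A$ it is the given that $A$ is a dg algebra, and on the $\eta_i$ it holds because $d_\old$ vanishes there. For $d_\CE^2 = 0$ on $\eta_i$, using $|\eta_i| = -1$ and the Leibniz rule one gets $d_\CE(-\eta_i^2) = \eta_i^3 - \eta_i^3 = 0$. On $a \in A$, one first notes the idempotent identity $\eta_i\eta_j = \delta_{ij}\eta_i^2$ (which follows from $\eta_i = e_i\eta_i e_i$), so that $\eta^2 = \sum_i \eta_i^2$ and consequently $d_\CE \eta = -\eta^2$. A direct application of the Leibniz rule then yields
\begin{equation*}
d_\CE^2 a = \eta^2 a - \eta[\eta, a] - (-1)^{|a|}[\eta, a]\eta - a\eta^2,
\end{equation*}
and expanding the two inner commutators shows that the four terms cancel pairwise.

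Finally, $d_\old d_\CE + d_\CE d_\old$ vanishes trivially on each $\eta_i$ since $d_\old$ is zero on $T_S(L^\ast[-1])$; on $a \in A$, a short sign chase using $d_\old \eta = 0$, $|\eta| = -1$ and $|d_\old a| = |a|-1$ gives $d_\old d_\CE a = \eta(d_\old a) + (-1)^{|a|}(d_\old a)\eta = -d_\CE d_\old a$. The entire argument is sign bookkeeping for graded commutators—this is the only obstacle, and there is no conceptual subtlety. The closing claim then follows immediately: $d^2 = d_\old^2 + d_\CE^2 + (d_\old d_\CE + d_\CE d_\old) = 0$.
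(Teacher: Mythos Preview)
Your proof is correct and follows essentially the same approach as the paper: both check the three identities $d_\old^2 = 0$, $d_\CE^2 = 0$, and $d_\old d_\CE + d_\CE d_\old = 0$ on the generators $a \in A$ and $\eta_i$, using that these expressions are derivations. The only cosmetic difference is that the paper packages the $d_\CE^2 a = 0$ computation using the graded Jacobi identity ($[\eta^2,a] - [\eta,[\eta,a]] = \tfrac{1}{2}[[\eta,\eta],a] - [\eta,[\eta,a]] = 0$), whereas you expand the commutators explicitly; your explicit observation that $\eta_i\eta_j = \delta_{ij}\eta_i^2$ (hence $d_\CE\eta = -\eta^2$) is used implicitly in the paper as well.
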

\begin{proof}
It is obvious that $(d_\old)^2=0$ because it is just the old differential, extended to zero elsewhere. For what concerns $d_\CE$, let us call $\eta = \sum_i \eta_i$, and observe that on the generators $d_\CE^2=0$:
\[
\begin{aligned}
&d_\CE^2 (a) = d_\CE\left(-[ \eta ,a] \right) = [\eta^2, a] - [\eta, [\eta,a] ] =\frac12 [[\eta,\eta], a] - [\eta,[\eta,a]] = 0 \,,\\
&d_\CE^2(\eta_i) = d_\CE(-\eta_i^2) =\eta_i \eta_i^2 - \eta_i^2 \eta_i = 0\,.
\end{aligned}
\]
Moreover the two differentials super-commute with each other:
\[
\begin{aligned}
&(d_\old d_\CE + d_\CE d_\old ) (a) = d_\old( -[\eta, a]) - [ \eta, d_\old a] = [\eta,d_\old a] - [\eta,d_\old a] = 0\,,\\
&(d_\old d_\CE + d_\CE d_\old ) (\eta_i) = d_\old (-\eta_i^2) = 0\,.
\end{aligned}
\] 
If follows that also $d=d_\old +d_\CE$ is a differential:
\[
d^2 = d_\old^2 + d_\CE^2 + (d_\old d_\CE + d_\CE d_\old) = 0\,.
\]
\end{proof}

For the moment we could view this Chevalley-Eilenberg construction as a functor:
\begin{equation}
\CE: \DGPPA_{T_S(L)} \to \DGA_{T_S(L\oplus L^\ast [-1])}\,,
\end{equation}
where we momentarily forget the double Poisson structure.

\begin{Remark}
\label{rem:CE}
In the commutative case, the Chevalley-Eilenberg complex $C(\lieg, M)$ can be defined for any $\lieg$-module $M$, without any requirement of a algebra structure on $M$, nor a Poisson structure on $M$ compatible with the $\lieg$ action. However we are interested only in the class of examples of $M=\O(X)$, where $X$ is a Hamiltonian $\lieg$-space, so there is a moment map $X \to \lieg^\ast$ with dual map a Poisson morphism $\Symg \to \O(X)$, so that the action of $x\in \lieg \acts M$ is the Poisson bracket in $M$ with the element $x \in \lieg \subset \Symg \to M$.

Analogously, in the noncommutative case we can define a more general Chevalley-Eilenberg complex for objects $A$ which do not have necessarily a algebra structure, nor a double Poisson structure (essentially only a $T_S(L)$-bimodule structure), but we are only interested in this case for derived Poisson reduction.

\end{Remark}

When we start from a noncommutative Hamiltonian space $A \in \PPA_{T_S(L)}^H$ and we apply the Chevalley-Eilenberg construction to the Shafarevich model for the derived zero locus~\eqref{eq:Shaf} we obtain a noncommutative version of the BRST complex:
\begin{equation}
\CE(\Sh(A)) \cong A \amalg_S T_S(L[1]\oplus L^\ast[-1])\, .
\end{equation}
which now we can equip with a natural double Poisson structure. It is the free product of the double Poisson structure on $A$ together with the obvious one on $T_S(L[1]\oplus L^\ast[-1])$ (with only non-trivial double brackets between generators $\ldb \vartheta_i, \eta_j\rdb = \delta_{ij} e_i\otimes e_i$). 

\begin{Defn}
We define the \emph{noncommutative BRST construction} as the functor:
\begin{equation}
\label{eq:BRST1}
\begin{aligned}
\BRST : &\,\, \PPA_{T_S(L)}^H \to \DGPPA_{T_S(L \oplus L[1]\oplus L^\ast[-1])}\\
&\qquad A \longmapsto A \amalg_S T_S(L[1]\oplus L^\ast[-1])\,.
\end{aligned}
\end{equation}
\end{Defn}

\begin{Remark}
\label{rem:BRSTcharge}
Analogously to the commutative case, the differential on $\BRST(A)$ is induced by the following noncommutative charge (in the sense of Definition~\ref{def:charge})
\begin{equation}
\label{eq:BRSTc}
\gamma=\underbrace{ \sum\limits_i \eta_i \delta_i}_{\gamma_{\mathrm{Ham}}} - \underbrace{\sum\limits_i \eta_i^2\vartheta_i }_{\gamma_{\mathrm{CE}}}\, .
\end{equation}
where the term $\gamma_{\mathrm{Ham}} \in L^\ast[-1]\otimes A$ represents the Hamiltonian action, or Shafarevich differential $L[1] \to A$, and the term $\gamma_\CE \in L^\ast[-1]^{\otimes 2} \otimes L[1]$ represents the multiplication map $S \otimes S \to S$ (once we identify $L\cong S$ and shift the degrees). Indeed one can easily verify that $\{\gamma, \gamma\}$ lies in the commutators subspace and that the differential is obtained by taking the single Poisson bracket with $\gamma$:
\begin{equation}
\label{eq:BRSTdc}
d_\BRST= \{ \gamma, - \} \, .
\end{equation}
\end{Remark}

We conclude this Section with a remark that there are natural maps of dg algebras linking the objects involved in the 2-step derived noncommutative Poisson reduction (from the zero locus to the derived zero locus to the BRST complex):
\begin{equation}
\label{eq:link}
\BRST(A) \xrightarrow{\ev_{\eta=0}}  \Sh(A) \fibr A/\mathcal{I}\,,
\end{equation}

where the map $\ev_{\eta=0}$ is the map sending all the Chevalley-Eilenberg generators $\eta_i \to 0$, and the map $\Sh(A) \fibr A/\mathcal{I}$ is the map sending first the Shafarevich generators $\vartheta_i \to 0$ and then taking the quotient by the two-sided ideal $\mathcal{I}$ generated by their differentials ($d\vartheta_i = \delta_i$). The fact that the map $\ev_{\eta=0}$ is a map of dg algebras follows essentially from the fact that once we set $\eta_i =0$ the Chevalley-Eilenberg differential becomes just the old differential on $\Sh(A)$.


\section{Representation schemes} 
\label{4}

In this section we want to show that when we apply the (opportune version of the) representation functor to the various objects that we defined in Section~\ref{2} and~\ref{3} we obtain the corresponding objects in the commutative world, therefore justifying our definitions (Hamiltonian spaces, derived zero loci, Chevalley-Eilenberg functor, \dots) according to the general Kontsevich-Rosenberg principle (\cite{KR}).
The ground algebra is always $S=kI$, a finite dimensional algebra of orthogonal idempotents. We fix a dimension vector $\nn=(n_i)_{i\in I}$ and consider the representation $\rho_\nn:S \to E_\nn :=\End(k^{n})$, $n=\sum_i n_i$, corresponding to the $S$-bimodule structure
\[
(E_\nn)_{ij}= \Hom_k (k^{n_i},k^{n_j})\,.
\]

\begin{Notation} Because now we need another set of indices running from $1$ to $n$, to avoid confusion we denote by $i,j \in I$, and by $r,s,u,v, \,(\dots) \in \{1,\dots ,n\}$.
\end{Notation}
Given $A\in \DGA_S$, we denote by $\Rep_\nn(A) \in \DGAff_k$ the differential graded affine scheme of $S$-representations of $A$ in $k^n$, and by $A_\nn =\O\Rep_\nn(A) \in \CDGA_k$ its commutative graded algebra of global functions. There is a group scheme action on the representation scheme $\Rep_\nn(A)$ by the group of automorphisms of $k^n$ which preserves the representation $\rho_\nn :S \to E_\nn$, which in this case
\[
G_S:= \{ g \in \Aut(k^n) \,| \, g \rho_\nn(s) = \rho_\nn(s) g \, \forall s \in S\} \cong \prod\limits_{i\in I} \GL_{n_i} =: \GL_\nn\,,
\]
is a product of general linear groups.

We recall that $A_\nn$ is linearly spanned by elements $a_{rs}$, with $a \in A$, and indices $r,s=1,\dots ,n$, with relations:
\begin{equation}
\label{eq:reprel}
(ab)_{rs} = \sum\limits_{u=1}^n a_{ru}b_{us}\,, \quad (\lambda a + \mu b)_{rs} = \lambda a_{rs} + \mu b_{rs}\,, \quad (\lambda,\mu \in k)\,.
\end{equation}
A more abstract way to view $A_\nn$ is $A_\nn \cong (\sqrt[\nn]{A})_{\n\n}$ (see for example \cite{BFR,BKR,DA}), where 
\[
\sqrt[\nn]{A} = \left( E_\nn \ast_S A\right)^{E_\nn} = \{ x \in E_\nn \ast_S A \, | \, x \cdot e = e \cdot x \,\, \forall e \in E_\nn \}
\]
and $(\sqrt[\nn]{A})_{\n\n} =(\sqrt[\nn]{A})/ \langle [\sqrt[\nn]{A},\sqrt[\nn]{A}] \rangle$ is the commutative dg algebra obtained when taking the quotient by the two-sided ideal generated by (graded) commutators.


\subsection{Representation schemes of double Poisson algebras}
\label{4.1}

If $A$ is a double Poisson algebra and we regard $E_\nn$ as a double Poisson algebra with the zero bracket, we have an induced free product double Poisson structure on $E_\nn \ast_S A$, which restricts to a double Poisson structure on $\sqrt[\nn]{A}$ (easy to verify), and descends to a well-defined double Poisson structure on $A_\nn$ (see \cite{VdB}). In terms of the generators $\{a_{rs}\}$:

\begin{Thm}[\cite{VdB}]
\label{thm:dpobjects}
If $A \in \DGPPA_S$, the algebra of functions on its representation scheme $A_\nn$ has a natural induced double Poisson bracket, which is defined on its generators by
\begin{equation}
\label{eq:doublePrep}
\ldb a_{rs}, b_{uv} \rdb = \ldb a,b\rdb'_{us} \otimes \ldb a,b\rdb''_{rv}\,.
\end{equation}
\end{Thm}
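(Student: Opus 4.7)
The plan mirrors the construction sketched in the paragraph preceding the statement. First, view $E_\nn$ as an object of $\DGPPA_S$ with zero double bracket; Proposition~\ref{prop:free} then endows the free product $E_\nn \ast_S A$ with a double Poisson structure uniquely determined by $\ldb e, e'\rdb = 0$, $\ldb e, a\rdb = 0$, and $\ldb a, a'\rdb = \ldb a, a'\rdb_A$ for $e, e' \in E_\nn$ and $a, a' \in A$. A straightforward induction on word length, using the outer derivation property of Definition~\ref{def:nbracket} and its inner counterpart from \S\ref{2.2}, extends the vanishing $\ldb e, -\rdb = 0 = \ldb -, e\rdb$ from generators to all of $E_\nn \ast_S A$.

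Next, I verify that $\sqrt[\nn]{A} = (E_\nn \ast_S A)^{E_\nn}$ is closed under this bracket. For $x, y \in \sqrt[\nn]{A}$ and $e \in E_\nn$, the outer-derivation property yields $\ldb xe, y\rdb = \ldb x, y\rdb \cdot e$, while combining the inner-derivation property with the intertwining identity~\eqref{eq:swapbim} yields $\ldb ex, y\rdb = e \cdot \ldb x, y\rdb$. Since $xe = ex$, the bracket $\ldb x, y\rdb$ is $E_\nn$-central in its first slot, and an analogous argument handles the second slot. Descent to $A_\nn = (\sqrt[\nn]{A})_\n$ then follows from Lemma~\ref{lem:singlebracket}: the associated single bracket kills commutators, and the double-bracket expression in the statement makes sense in the commutative quotient once one interprets each tensor slot as a class in $A_\nn$.

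To verify the formula on generators, the key identification is that $a_{rs} \in A_\nn$ is represented in $\sqrt[\nn]{A}$ by the element $X^a_{rs} := \sum_u e_{us} \cdot a \cdot e_{ru}$, which commutes with every matrix unit $e_{ij}$ by direct calculation. Expanding $\ldb X^a_{rs}, X^b_{uv}\rdb$ via the outer- and inner-derivation rules in both slots, together with $\ldb e_{ij}, -\rdb = 0$, annihilates every matrix-unit cross-term; the residual contribution, after projecting each tensor slot to $A_\nn$, collapses to $\ldb a, b\rdb'_{us} \otimes \ldb a, b\rdb''_{rv}$.

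The main obstacle is the second step: the derivation rule in the first slot is formulated via the \emph{inner} bimodule structure, whereas the $E_\nn$-centrality one needs is measured against the \emph{outer} structure, and carefully translating between the two via~\eqref{eq:swapbim} is the subtle point. Once closedness under $E_\nn$ is secured, the descent to $A_\nn$ and the computation on generators are essentially bookkeeping.
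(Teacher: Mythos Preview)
Your outline matches the paper's sketch exactly (the paper does not prove this theorem in full but cites \cite{VdB}), so the overall strategy is the intended one. However, several of your intermediate claims are not right as stated.

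\textbf{Inner versus outer in the first slot.} You write ``the outer-derivation property yields $\ldb xe, y\rdb = \ldb x, y\rdb \cdot e$''. The derivation rule in the first argument is the \emph{inner} one (property~($2^\ast$) in \S\ref{2.2}), so what you actually get is $\ldb xe, y\rdb = \ldb x, y\rdb \ast e$ and $\ldb ex, y\rdb = e \ast \ldb x, y\rdb$. Combined with the outer-derivation rule in the second argument you obtain, for $x,y\in\sqrt[\nn]{A}$, the two conditions $e\ast\ldb x,y\rdb=\ldb x,y\rdb\ast e$ and $e\cdot\ldb x,y\rdb=\ldb x,y\rdb\cdot e$. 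You correctly flag in your last paragraph that this translation is the subtle point, but the displayed formulas contradict that awareness.

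\textbf{From centrality to membership in $\sqrt[\nn]{A}\otimes\sqrt[\nn]{A}$.} The two centrality conditions above do \emph{not}, by themselves, say that each tensor leg lies in $\sqrt[\nn]{A}$. What makes this work is the Morita-type identification $E_\nn\ast_S A\cong E_\nn\otimes_k\sqrt[\nn]{A}$ (equivalently $M_n(\sqrt[\nn]{A})$): once you know the algebra is a matrix algebra over $\sqrt[\nn]{A}$, simultaneous inner- and outer-$E_\nn$-centrality of an element of $(E_\nn\ast_S A)^{\otimes2}$ forces both matrix parts to be scalar. Without invoking this structure, the step ``closedness under $E_\nn$'' is a genuine gap.

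\textbf{Descent to $A_\nn$.} You write $A_\nn=(\sqrt[\nn]{A})_\natural$, but the paper's definition is $A_\nn=(\sqrt[\nn]{A})_{\natural\natural}$, the quotient by the two-sided \emph{ideal} generated by commutators, not just their linear span. Lemma~\ref{lem:singlebracket} concerns the associated \emph{single} bracket and its vanishing on $[A,A]$; it does not by itself show that the \emph{double} bracket descends modulo the commutator ideal. You need to check that $\ldb I,-\rdb$ and $\ldb-,I\rdb$ land in $I\otimes\sqrt[\nn]{A}+\sqrt[\nn]{A}\otimes I$, which again is most cleanly seen via the matrix description (or, alternatively, by defining the bracket directly via~\eqref{eq:doublePrep} on generators and verifying well-definedness against the relations~\eqref{eq:reprel}).

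The generator computation in your third paragraph is fine once these structural points are in place.
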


\begin{Remark}
\label{rem:charge}
Let us denote the trace map by:
\begin{equation}
\label{eq:trace}
\tr : A_\n= A/[A,A] \to A_\nn^{\GL_\nn}\,.
\end{equation}
In the special double Poisson algebras with differential $d=\{\gamma,-\}$ given by a noncommutative charge, the induced differential on $A_\nn$ is obtained as the Poisson bracket with the trace of the charge: $d=\{ \tr(\gamma),-\}$. This can be easily verified on the generators:
\[
\begin{aligned}
&d (a_{rs}) =(da)_{rs} =(\{\gamma, a\})_{rs} = (\ldb \gamma,a\rdb' \ldb \gamma, a\rdb'')_{rs} = \\
&=\sum\limits_{u} \ldb \gamma,a\rdb_{ru}' \ldb \gamma,a\rdb_{us}'' = \sum\limits_{u} \{\gamma_{uu} , a_{rs} \} = \{ \tr(\gamma) ,a_{rs} \}\, .
\end{aligned}
\]
\end{Remark}

The natural question after reading Theorem~\ref{thm:dpobjects} is whether a morphism of double Poisson algebras induces a Poisson morphism between the induced structures on representation schemes, and the answer is yes:

\begin{Thm}
\label{thm:dpmorphisms}
If $f:A \to  B $ is a morphism of double Poisson algebras, the induced map $f_\nn: A_\nn \to B_\nn$ is a morphism of double Poisson algebras.
\end{Thm}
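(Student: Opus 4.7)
The plan is to verify that $f_\nn$ intertwines the brackets on a generating set of $A_\nn$, then extend to the whole algebra using the derivation property of the bracket together with the fact that $f_\nn$ is, by construction, a morphism of commutative dg algebras. Compatibility with differentials is automatic from the fact that $f$ is already a morphism of dg algebras.

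\textbf{Step 1 (reduction to generators).} Recall from the discussion preceding~\eqref{eq:reprel} and~\eqref{eq:doublePrep} that $A_\nn$ is generated as a commutative dg algebra by the matrix-coefficient symbols $\{a_{rs} \,:\, a\in A,\, r,s\in\{1,\dots,n\}\}$, and the functor $(-)_\nn$ acts on morphisms by $f_\nn(a_{rs})=f(a)_{rs}$. Since the bracket on $A_\nn$ is, by Theorem~\ref{thm:dpobjects}, determined by its values on these generators via the Leibniz/derivation property (and cyclic invariance), and since $f_\nn$ respects products, it suffices to prove that
\begin{equation*}
(f_\nn)^{\otimes 2}\bigl(\ldb a_{rs},b_{uv}\rdb_{A_\nn}\bigr)=\ldb f_\nn(a_{rs}),\,f_\nn(b_{uv})\rdb_{B_\nn}
\end{equation*}
for all $a,b\in A$ and all indices $r,s,u,v$.

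\textbf{Step 2 (computation on generators).} Since $f:A\to B$ is a morphism of double Poisson algebras, $f^{\otimes 2}(\ldb a,b\rdb_A)=\ldb f(a),f(b)\rdb_B$, i.e.\ in sumless Sweedler notation
\begin{equation*}
f(\ldb a,b\rdb_A')\otimes f(\ldb a,b\rdb_A'')=\ldb f(a),f(b)\rdb_B'\otimes \ldb f(a),f(b)\rdb_B''.
\end{equation*}
Taking the $(u,s)$-matrix entry of the first tensor factor and the $(r,v)$-matrix entry of the second, and using~\eqref{eq:doublePrep} for both $A_\nn$ and $B_\nn$, gives
\begin{align*}
(f_\nn)^{\otimes 2}\bigl(\ldb a_{rs},b_{uv}\rdb_{A_\nn}\bigr)
&=f(\ldb a,b\rdb_A')_{us}\otimes f(\ldb a,b\rdb_A'')_{rv}\\
&=\ldb f(a),f(b)\rdb_{B,us}'\otimes \ldb f(a),f(b)\rdb_{B,rv}''\\
&=\ldb f(a)_{rs},f(b)_{uv}\rdb_{B_\nn}=\ldb f_\nn(a_{rs}),f_\nn(b_{uv})\rdb_{B_\nn},
\end{align*}
which is exactly what we need.

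\textbf{Step 3 (extension and differentials).} Extending from generators to all of $A_\nn$ is now formal: both sides of the desired identity are bi-derivations in their two arguments (outer bimodule structure on the codomain), and $f_\nn$ is a commutative algebra map, so matching on generators forces matching on arbitrary products. Finally, $f_\nn$ commutes with the differentials because $f$ does and because $(-)_\nn$ is functorial on $\DGA_S$; combined with Lemma~\ref{lem:singlebracket}(4) this shows $f_\nn$ is a morphism in $\CDGPA_k$ (and, if one interprets the statement as preservation of the induced double bracket on $A_\nn$, the same calculation already covers that case, since Step 2 is done at the level of the double bracket before any multiplication). The only mildly delicate point is bookkeeping the indices in~\eqref{eq:doublePrep}; once that convention is fixed, no computation is required beyond Step 2.
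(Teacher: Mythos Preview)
Your proof is correct and follows exactly the same approach as the paper: verify on the generators $a_{rs}$ using the explicit formula~\eqref{eq:doublePrep} together with $f^{\otimes 2}\ldb a,b\rdb_A=\ldb f(a),f(b)\rdb_B$. The paper's proof is essentially your Step~2 alone, leaving the reduction to generators and the extension implicit; your Steps~1 and~3 just make explicit what the paper takes for granted.
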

\begin{proof}
It is straightforward to prove on the generators $a_{rs} \otimes a'_{uv} \in A_\nn^{\otimes 2}$:
\[
\begin{aligned}
&f_\nn^{\otimes 2}\left(  \ldb a_{rs}, a'_{uv} \rdb\right)= f_\nn^{\otimes 2}\left(  \ldb a,a' \rdb'_{us} \otimes \ldb a,a' \rdb''_{rv} \right)= f(\ldb a,a'\rdb')_{us} \otimes f(\ldb a,a'\rdb'')_{rv} = \\
& \ldb f(a),f(a')\rdb')_{us} \otimes \ldb f(a),f(a')\rdb''_{rv} = \ldb f(a)_{rs} , f(a')_{uv} \rdb = \ldb f_\nn(a_{rs}) , f_\nn(a'_{uv})\rdb\,.
\end{aligned}
\]
\end{proof}

Theorem~\ref{thm:dpobjects} and~\ref{thm:dpmorphisms} prove that the representation functor at the level of double Poisson algebras is indeed a functor. Because in the commutative world we are interested only in the single Poisson structure, we can view this `Poisson representation functor' as a functor $(-)_\nn : \DGPPA_S \to \CDGPA_k$, an enrichment of the classical (dg)-representation functor:

\begin{equation}
\label{eq:enrich}
\begin{tikzcd}
  \DGPPA_S  \arrow{r}{} \arrow{d}{}
    & \CDGPA_k \arrow{d}{}  \\
\DGA_S \arrow{r}{} 
&\CDGA_k
 \end{tikzcd}
\end{equation}
(the vertical arrows are the natural forgetful functors).

\begin{Remark}
A natural question that arises in the mind of the reader who knows that the classical representation functor $(-)_\nn : \DGA_S \to \CDGA_k$ has a total left derived functor $\L(-)_\nn : \Ho(\DGA_S) \to \Ho(\CDGA_k)$ (see \cite{BKR,BFR,DA}) is whether or not also the functor at the level of double Poisson algebras can be derived. As pointed out by Y. Berest, it is not known whether such categories of noncommutative Poisson algebras (double Poisson in our case, $\HH_0$-Poisson in the case of \cite{BCER}) have a model structure compatible with the standard (projective) model structure on dg algebras. We can make the following observations
\begin{enumerate}
\item It is possible to define a ``homotopy category'' $\Ho^\ast(\DGPPA_S)$ with objects the double Poisson algebras which are also cofibrant as dg algebras over $S$, and morphisms the homotopy classes of morphisms (where the notion of homotopy is a double Poisson version of polynomial homotopy of dg algebras, as explained in \cite[Remark 5.1.1]{BCER}). With these definitions the representation functor at the level of double Poisson algebras preserves polynomial homotopies, and therefore admits a total left derived functor 
\[
\L(-)_\nn: \Ho^\ast(\DGPPA_S) \to \Ho(\CDGA_k)\,,
\]
which is computed as $\L(A)_\nn = (QA)_\nn$, where $S \cofi QA \acfib A$ is a cofibrant replacement in the category of dg algebras over $S$ and the maps are all maps of double Poisson algebras.
\item Analogously to what suggested in \cite{BCER} for $\HH_0$-Poisson algebras one could consider first the infinite-dimensional limit (to eliminate the dependence on the dimension vector $\nn$):
\[
(-)_{\bm\infty}: \DGA_S \to \CDGA_k\,,
\]
which has a total left derived functor $\L(-)_{\bm \infty}$, and use the construction of \emph{homotopy pull-back of model categories along functors with total left derived functors} (see \cite{To1}) --- in this case along the forgetful functor $\CDGPA_k \to \CDGA_k$ --- to obtain a model category $\DGA_S \times_{\CDGA_k}^h \CDGPA_k$, which because of the diagram~\eqref{eq:enrich} comes with a functor from double Poisson algebras:
\[
\DGPPA_S \to \DGA_S \times_{\CDGA_k}^h \CDGPA_k \,,
\]
which is homotopy invariant and conjecturally it induces an equivalence of categories: $\Ho^\ast(\DGPPA_S) \to \Ho(\DGA_k \times_{\CDGA_k}^h \CDGPA_k)$. We do not know whether this is true or not, and it could be part of future work.
\end{enumerate}
\end{Remark}


\subsection{Hamiltonian spaces}
\label{4.2}

In this Section we show how we obtain classical Hamiltonian $\gl_\nn$-spaces by applying the representation functor to noncommutative Hamiltonian spaces defined in \S\ref{3.3}. Let us start by considering the algebra $T_S(L)$ from~\eqref{eq:L}, a free product over $S$ of copies of polynomials in $1$ variable, with double Poisson structure:
\begin{equation}
\label{eq:dpkt}
\ldb t_i,t_j\rdb = \delta_{ij} (  t_i \otimes e_i -e_i \otimes t_i)\,.
\end{equation}

For some dimension vector $\nn$, the representation scheme of $T_S(L)$ is just a product of general linear algebras, in fact:
\begin{equation}
\label{eq:gln}
\Rep_\nn( T_S(L)) \cong \prod_{i \in I} \Rep_\nn (k[t_i]) \cong \prod_{i\in I} \Hom_{S-\Bimod}(k\cdot t_i,\gl_n) \cong \prod_{i \in I} \gl_{n_i} (k) =: \gl_\nn\,.
\end{equation}
We identify its algebra of functions with $\O(\gl_\nn)=\Sym(\gl_\nn^\ast) \cong \Sym(\gl_\nn)$ using the trace and we can show that the Poisson bracket induced from the double Poisson bracket on $T_S(L)$ is the natural Poisson structure on $\Sym(\gl_\nn)$:

\begin{Prop}
The induced Poisson structure on $\Sym(\gl_\nn) \cong T_S(L)_\nn$ is the natural extension of the Lie bracket.
\end{Prop}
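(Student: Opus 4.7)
The plan is to verify the claim directly on generators, using the explicit formula from Theorem~\ref{thm:dpobjects} for the induced bracket on representation schemes, and then to identify the outcome with the Kirillov–Kostant–Souriau (Lie–Poisson) bracket on $\Sym(\gl_\nn)$. Since both Poisson brackets are biderivations, it suffices to check them on a set of algebra generators; the generators of $T_S(L)_\nn$ are the functions $(t_i)_{rs}$, and $(t_i)_{rs}$ is nonzero only when $r,s$ both lie in the $i$-th block of the decomposition $k^n=\bigoplus_i k^{n_i}$ induced by $\rho_\nn$ (this follows from $t_i = e_i t_i e_i$ together with $\rho_\nn(e_j)_{rs}=\delta_{rs}\delta_{j,i(r)}$, where $i(r)$ denotes the block of the index $r$).

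First I would carry out the bracket computation. Multiplying the two tensor factors in~\eqref{eq:doublePrep} and specialising $a=t_i$, $b=t_j$, the double bracket~\eqref{eq:dpkt} gives
\begin{equation*}
\{(t_i)_{rs},(t_j)_{uv}\} \;=\; \delta_{ij}\bigl((t_i)_{us}(e_i)_{rv}-(e_i)_{us}(t_i)_{rv}\bigr).
\end{equation*}
Using $(e_i)_{rv}=\delta_{rv}\delta_{i,i(r)}$ and the fact that $(t_i)_{rs}$ vanishes outside the $i$-th block, the right-hand side simplifies, on non-trivial arguments, to $\delta_{rv}(t_i)_{us}-\delta_{us}(t_i)_{rv}$.

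Second, I would identify the linear span of the $(t_i)_{rs}$ with $\gl_\nn$. Under the trace pairing $\gl_\nn\otimes \gl_\nn\to k$, the matrix-entry functional $M\mapsto M_{rs}$ corresponds to the basis element $E_{sr}\in \gl_\nn$ (with $E_{sr}$ living in the block $\gl_{n_{i(r)}}$ when $i(r)=i(s)$, and zero otherwise). Thus the identification $(t_i)_{rs}\leftrightarrow E_{sr}$ makes $T_S(L)_\nn\cong \Sym(\gl_\nn^\ast)\cong \Sym(\gl_\nn)$ an isomorphism of graded commutative algebras. The Lie–Poisson bracket on $\Sym(\gl_\nn)$ is determined on linear generators by the Lie bracket, and in $\gl_\nn$ one has
\begin{equation*}
[E_{sr},E_{vu}] \;=\; \delta_{rv}E_{su}-\delta_{us}E_{vr},
\end{equation*}
which translates, under the identification above, to exactly the expression $\delta_{rv}(t_i)_{us}-\delta_{us}(t_i)_{rv}$ computed in the first step (with the $\delta_{ij}$ automatic from the block structure).

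The two brackets therefore agree on the linear generators of $\Sym(\gl_\nn)$, and since both are Poisson they agree everywhere. The only point requiring any real care is the bookkeeping of signs/indices in translating between the matrix-entry functions $(t_i)_{rs}$ and the basis $\{E_{ab}\}$ of $\gl_\nn$ via trace; the sign turns out to match precisely because the trace pairing $(M,N)\mapsto \tr(MN)$ sends $E_{sr}$ to the functional $M\mapsto M_{rs}$ (with indices swapped), and this swap is consistent with the swap of indices appearing in formula~\eqref{eq:doublePrep}.
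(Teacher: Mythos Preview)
Your proof is correct. The computation on generators is accurate, and the identification via the trace pairing $(t_i)_{rs}\leftrightarrow E_{sr}$ is handled properly; the index swap you flag is indeed the only subtle point, and it works out exactly as you say.

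The paper's own argument is much terser: it first uses the product decomposition $\Rep_\nn(T_S(L))\cong \prod_i \gl_{n_i}$ to reduce to the one-vertex case $|I|=1$, and then simply cites \cite[Example~7.5.3]{VdB} for that case. Your approach carries out the general computation directly, absorbing the multi-vertex bookkeeping into the block structure rather than factoring it off beforehand. The two are mathematically equivalent---unpacking Van den Bergh's example would give essentially your calculation in the one-block situation---but your version is self-contained and makes the match with the Kirillov--Kostant bracket explicit, at the cost of tracking a few extra Kronecker deltas.
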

\begin{proof}
Because of the decomposition in~\eqref{eq:gln} we can just prove the case with one vertex: $|I|=1$. This case is explained in \cite[Example 7.5.3.]{VdB}.
\end{proof}

From now on we will make implicit use of the trace isomorphism and therefore identify $T_S(L)_\nn \cong \Sym(\gl_\nn)$ without saying it explicitly again. Other examples of interest later are when we shift the $S$-bimodule $L$ to either $L[1]$ or $L[-1]$. In this case we obtain:
\begin{equation}
\label{eq:tsl1}
T_S(L[1])_\nn =\O(\gl_\nn[-1]) \cong \Sym(\gl_\nn[1])\,, \quad T_S(L[-1])_\nn =\O(\gl_\nn[1]) \cong \Sym(\gl_\nn[-1])\,,
\end{equation}
where $\Sym(-): \DGVect_k \to \CDGA_k$ is the graded commutative one, so it gives the symmetric algebra for even degrees and the antisymmetric algebra for odd degrees $\Sym(V) = S(V_{\mathrm{even}}) \otimes \Lambda(V_{\mathrm{odd}})$. 

Now let us consider a noncommutative Hamiltonian space $A \in \PPA_{T_S(L)}^H$, whose structure map $T_S(L) \to A$ induces a $\GL_\nn$-equivariant map of Poisson algebras $\Sym(\gl_\nn) \to A_\nn$. Because of the property~\eqref{eq:hamilton} this corresponds to the infinitesimal action of $\gl_\nn$ coming from the natural action of $\GL_\nn$ by conjugation --- this is essentially \cite[Proposition 7.11.1]{VdB} --- so that $A_\nn$ is a Hamiltonian $\gl_\nn$-space. Dually, the map of schemes $\mu_\nn: \Rep_\nn(A) \to \gl_\nn^\ast$ is a Poisson moment map for the canonical action of $\GL_\nn\acts \Rep_\nn(A)$. In other words, we obtain the following enriched version of the diagram~\eqref{eq:enrich}:

\begin{Thm}
\label{thm:repham}
The representation functor enriches to a functor between noncommutative Hamiltonian spaces and commutative Hamiltonian spaces:
\begin{equation}
\label{eq:remham}
\begin{tikzcd}
  \PPA_{T_S(L)}^H  \arrow{r}{(-)_\nn} \arrow{d}{}
    & \CPA_{\Sym(\gl_\nn)}^H \arrow{d}{}  \\
\DGA_{T_S(L)} \arrow{r}{(-)_\nn} 
&\CDGA_{\Sym(\gl_\nn)}
 \end{tikzcd}
\end{equation}
where the vertical arrows are the natural forgetful functors. Moreover, the representation functor $(-)_\nn:\DGA_{T_S(L)} \to \CDGA_{\Sym(\gl_\nn)}$ is cocontinuous, so in particular it preserves coproducts.
\end{Thm}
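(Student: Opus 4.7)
The plan is to split the statement into three pieces: (a) the functor carries a noncommutative Hamiltonian space to a commutative Hamiltonian $\gl_\nn$-space, (b) commutativity of the square in~\eqref{eq:remham} is immediate, and (c) the bottom horizontal functor is cocontinuous.

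For (a), start with $A\in\PPA_{T_S(L)}^H$ with structure map $f:T_S(L)\to A$. By Theorems~\ref{thm:dpobjects} and~\ref{thm:dpmorphisms}, $f_\nn:\Sym(\gl_\nn)\cong T_S(L)_\nn\to A_\nn$ is a morphism of (single) Poisson algebras, and the $\GL_\nn$-action on $\Rep_\nn(A)$ by conjugation is automatically by Poisson automorphisms thanks to the $S$-linearity of the double bracket on $A$. What remains is the moment-map identity $\{f_\nn(X),-\}=\widehat X$ for $X\in\gl_\nn$, where $\widehat X$ denotes the infinitesimal conjugation derivation. I would verify this on linear generators: take $X=E_{vu}^{(i)}$ whose trace-dual image in $\Sym(\gl_\nn)$ is $(t_i)_{uv}$, and compute, using formula~\eqref{eq:4} and the Hamiltonian identity~\eqref{eq:hamilton},
\[
\{(\delta_i)_{uv},a_{rs}\}=(ae_i)_{rv}(e_i)_{us}-(e_i)_{rv}(e_ia)_{us},
\]
which, on evaluating the matrix entries of the idempotents, reduces to $\delta_{us}a_{rv}-\delta_{rv}a_{us}$ (when both $u,v$ lie in block $i$) and matches, up to the sign fixed by the trace convention, the conjugation derivation $\widehat{E_{vu}^{(i)}}\cdot a_{rs}=\delta_{rv}a_{us}-\delta_{us}a_{rv}$. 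Commutativity of the square (part (b)) is then immediate: both vertical forgetful functors strip the Poisson structure and both horizontal representation functors act identically on underlying dg-algebras.

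For part (c), I would exhibit a right adjoint. At the level $(-)_\nn:\DGA_S\to\CDGA_k$ the right adjoint is the matrix-algebra functor $M_\nn(B)=E_\nn\otimes_k B$, regarded as an $S$-algebra via $\rho_\nn$; the adjunction $\Hom_{\DGA_S}(A,M_\nn B)\cong\Hom_{\CDGA_k}(A_\nn,B)$ follows from the universal property $A_\nn=(\sqrt[\nn]{A})_{\n\n}$. To promote this to the under-category functor, observe that equipping $E_\nn\otimes B$ with a $T_S(L)$-structure extending its $S$-structure amounts to an $S$-bimodule map $L\to E_\nn\otimes B$, which is the same data as an element of $\bigoplus_i\gl_{n_i}\otimes B\subset\gl_\nn\otimes B$, equivalently a map of commutative algebras $\Sym(\gl_\nn)\to B$ --- precisely the $\Sym(\gl_\nn)$-structure on $B$. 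Hence $M_\nn$ lifts to a right adjoint between the under categories, so $(-)_\nn:\DGA_{T_S(L)}\to\CDGA_{\Sym(\gl_\nn)}$ preserves all small colimits, and in particular coproducts.

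The main obstacle I anticipate is the sign/index bookkeeping in (a): the trace-dual identification $(t_i)_{uv}\leftrightarrow E_{vu}^{(i)}$ transposes indices, and the moment-map sign convention (whether $\{\mu^\ast X,-\}$ equals $\widehat X$ or $-\widehat X$) has to be fixed once and tracked consistently --- this is why the author is careful to introduce the minus sign in~\eqref{eq:dPcoteven} for cotangent bundles and in~\eqref{eq:D}. Once this convention is pinned down, the verifications in (a) and (c) are entirely formal manipulations from the definitions and a standard adjoint-functor argument.
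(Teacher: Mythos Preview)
Your proposal is correct and essentially parallels the paper's own argument. For part~(a) the paper simply refers back to Theorems~\ref{thm:dpobjects}, \ref{thm:dpmorphisms} and invokes \cite[Proposition~7.11.1]{VdB} for the moment-map compatibility, whereas you carry out the generator computation explicitly; the content is the same. For part~(c) there is a minor methodological difference: the paper argues abstractly that colimits in an under category $S\downarrow\CC$ are computed in $\CC$, so cocontinuity of $(-)_\nn:\DGA_S\to\CDGA_k$ (which indeed follows from the adjunction with $M_\nn$) immediately propagates to the under-category level; you instead lift the right adjoint $M_\nn$ explicitly to the under categories by matching $T_S(L)$-structures on $E_\nn\otimes B$ with $\Sym(\gl_\nn)$-structures on $B$. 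Your route is slightly more hands-on and has the virtue of identifying the lifted right adjoint concretely, while the paper's one-line argument is quicker but less informative. Both are entirely standard.
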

\begin{proof}
The previous considerations prove the upper part of the diagram. As for the cocontinuity, it is a simple consequence of the fact that it is the ``under category version'' of the cocontinuous functor $(-)_\nn : \DGA_S \to \CDGA_k$ and, the fact that colimits in the under category are computed in the original category.
\end{proof}

If we denote by $\mathcal{I}$ the two-sided ideal in $A$ of the image of the Hamiltonian action $T_S(L)\to A$, it follows from the property of cocontinuity of the representation functor (in particular, coproducts) that the zero locus $A/\mathcal{I}=A \amalg_{T_S(L)} S$ corresponds to the classical zero locus of the induced moment map $\mu_\nn: \Rep_\nn(A) \to \gl_\nn^\ast$:
\begin{equation}
\begin{tikzcd}
  T_S(L) \arrow{r}{} \arrow{d}{}
    & S \arrow{d}{}  \\
A \arrow{r}{} 
& A\amalg_{T_S(L)} S 
\end{tikzcd}
\quad \overset{(-)_\nn}{\rightarrow} \quad 
\begin{tikzcd}
 \Sym(\gl_\nn) \arrow{r}{} \arrow{d}{}
    & k \arrow{d}{}  \\
A_\nn \arrow{r}{} 
& A_\nn \otimes_{\Sym(\gl_\nn)} k =\O(\mu_\nn^{-1}(0))
\end{tikzcd}
\end{equation}

\begin{Remark}[Poisson structure on the zero locus] As for the Poisson structure we observe that, by Remark~\ref{rem:PS}, $A\amalg_{T_S(L)} S $ has an induced $\HH_0$-Poisson structure, which induces (Remark~\ref{rem:H0CB}) a Poisson structure on the ordinary Poisson reduction
\[
\Rep_\nn(A/\mathcal{I})\sslash \GL_\nn = \Spec( \O(\mu_\nn^{-1}(0))^{\GL_\nn})\, .
\] 

\end{Remark}

Finally when we consider the specific chosen model representing the derived zero locus as the Shafarevich complex $\Sh(A) = A \amalg_{T_S(L)} T_S(L \oplus L[1])$ we obtain:

\begin{Lem}
\label{lem:shakos}
The Shafarevich complex (a chosen model for the noncommutative derived zero locus) is sent under the representation functor to the standard model for the commutative derived zero locus, the Koszul complex:
\begin{equation}
\label{eq:shakos}
\left( \Sh(A)\right)_\nn \cong A_\nn \otimes_{\Sym(\gl_\nn)} \Sym(\gl_\nn \oplus \gl_\nn[1])= A_\nn \otimes_k \Sym(\gl_\nn[1])\,.
\end{equation}
\end{Lem}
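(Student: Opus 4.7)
The proof plan is to reduce the statement to the cocontinuity of the representation functor (Theorem~\ref{thm:repham}) and a direct identification of $(T_S(L[1]))_\nn$ with $\Sym(\gl_\nn[1])$, then check that the differentials are intertwined.

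First, I would unpack the two different expressions for $\Sh(A)$: the coproduct over $T_S(L)$, namely $\Sh(A) = A \amalg_{T_S(L)} T_S(L \oplus L[1])$, and the equivalent coproduct over $S$, $\Sh(A) \cong A \amalg_S T_S(L[1])$. Correspondingly, to obtain the Koszul complex I would verify the algebra-level isomorphism
\begin{equation*}
A_\nn \otimes_{\Sym(\gl_\nn)} \Sym(\gl_\nn \oplus \gl_\nn[1]) \;\cong\; A_\nn \otimes_k \Sym(\gl_\nn[1]),
\end{equation*}
which follows from $\Sym(\gl_\nn \oplus \gl_\nn[1]) \cong \Sym(\gl_\nn) \otimes_k \Sym(\gl_\nn[1])$ and cancelling $\Sym(\gl_\nn) \otimes_{\Sym(\gl_\nn)}$.

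Next, the heart of the argument is the identification of $(T_S(L[1]))_\nn$ with $\Sym(\gl_\nn[1])$. I would argue exactly as in the unshifted case (\eqref{eq:gln} and the subsequent proposition): the shifted $S$-bimodule $L[1]$ is a free direct sum of copies of $S[1]$, one per vertex, so
\begin{equation*}
\Rep_\nn(T_S(L[1])) \cong \prod_{i\in I} \Hom_{S\text{-}\Bimod}(k\!\cdot\! t_i, \gl_\nn[-1]) \cong \gl_\nn[-1],
\end{equation*}
whose algebra of functions is $\Sym(\gl_\nn[1])$ by~\eqref{eq:tsl1}. With this in hand, I apply the cocontinuity half of Theorem~\ref{thm:repham} to the presentation $\Sh(A) = A \amalg_{T_S(L)} T_S(L \oplus L[1])$ (viewed as a pushout in $\DGA_{T_S(L)}$) to obtain the pushout of commutative dg algebras over $\Sym(\gl_\nn)$
\begin{equation*}
(\Sh(A))_\nn \;\cong\; A_\nn \otimes_{\Sym(\gl_\nn)} (T_S(L \oplus L[1]))_\nn \;\cong\; A_\nn \otimes_{\Sym(\gl_\nn)} \Sym(\gl_\nn \oplus \gl_\nn[1]),
\end{equation*}
where for the second factor I use once more cocontinuity of $(-)_\nn:\DGA_S \to \CDGA_k$ applied to $T_S(L\oplus L[1]) \cong T_S(L) \amalg_S T_S(L[1])$ together with the two computations of $(T_S(L))_\nn$ and $(T_S(L[1]))_\nn$.

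The remaining point — which I expect to be the only subtle step — is to check that the differential carries over correctly, i.e.\ that the image under $(-)_\nn$ of the Shafarevich differential $d\vartheta_i = \delta_i$ is the standard Koszul differential. This is a matter of evaluating on generators: the representation functor sends $\vartheta_i \mapsto (\vartheta_i)_{rs}$ and $\delta_i \mapsto (\delta_i)_{rs}$, where the matrix entries $(\vartheta_i)_{rs}$ span the copy of $\gl_\nn[1]$ and $(\delta_i)_{rs}$ is the composite $(\mu_\nn)^\ast$ applied to the dual basis of $\gl_\nn$. Since the representation functor is a functor of dg algebras, it commutes with differentials, so $d(\vartheta_i)_{rs} = (\delta_i)_{rs}$, which is precisely the Koszul differential contracting against the moment map — no sign or combinatorial subtleties intervene because everything is determined on a set of polynomial generators. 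Combining the algebra isomorphism with the compatibility of differentials yields the claimed identification as objects of $\CDGA_{\Sym(\gl_\nn)}$.
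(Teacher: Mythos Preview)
Your proposal is correct and follows exactly the route the paper intends: the paper does not give an explicit proof of this lemma, treating it as an immediate consequence of the cocontinuity statement in Theorem~\ref{thm:repham} together with the identifications $(T_S(L))_\nn \cong \Sym(\gl_\nn)$ and $(T_S(L[1]))_\nn \cong \Sym(\gl_\nn[1])$ from~\eqref{eq:gln} and~\eqref{eq:tsl1}. Your added verification that the differential matches the Koszul differential is a useful detail that the paper leaves implicit.
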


\begin{Notation}
For a noncommutative Hamiltonian space $A$, we denote the associated commutative Koszul complex of dimension $\nn$ by
\begin{equation}
\C_\nn(A):= (\Sh(A) )_\nn \cong A_\nn \otimes \Sym(\gl_\nn[1])\,.
\end{equation}
\end{Notation}
Last but not least --- because it represents the largest class of examples of Hamiltonian spaces --- we show the correspondence between noncommutative and commutative cotangent bundles:
\begin{Thm}
If $A$ is smooth, there is a natural isomorphism between the representation scheme of the noncommutative cotangent bundle of $A$ and the ordinary cotangent bundle of the representation scheme of $A$:
\begin{equation}
\label{eq:TastRep}
\Rep_\nn( T^\ast A ) \cong T^\ast \Rep_\nn(A) \,.
\end{equation}
The Poisson structure induced on $T^\ast \Rep_\nn(A)$ by the double Poisson structure on $T^\ast A$ is the Poisson structure coming from its standard symplectic structure.
\end{Thm}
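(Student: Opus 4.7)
The argument splits into two parts: first, the scheme isomorphism, obtained by reducing the representation functor on a tensor algebra to a symmetric algebra and then applying the smooth-algebra identification of the bimodule of double derivations with the tangent sheaf; second, the Poisson match, obtained by a case check on generators using~\eqref{eq:doublePrep} and~\eqref{eq:dPcoteven}.

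For the scheme isomorphism, write $T^\ast A = T_A \DDer_S(A)$. I first establish the following general fact: for any $R \in \Alg_S$ and any $R$-bimodule $M$, the representation functor sends the tensor algebra to a symmetric $R_\nn$-algebra
\[
(T_R M)_\nn \cong \Sym_{R_\nn}(\widetilde M_\nn),
\]
where $\widetilde M_\nn$ is the $R_\nn$-module with generators $\{m_{rs}\}_{m \in M,\,1 \le r,s \le n}$ subject to $k$-linearity in $m$ together with the bimodule relations $(xm)_{rs} = \sum_u x_{ru} m_{us}$ and $(mx)_{rs} = \sum_u m_{ru} x_{us}$ for $x \in R$. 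This is a straightforward generator-relation argument from the definition of $(-)_\nn$ (compare~\eqref{eq:reprel}). It reduces the problem to identifying the $A_\nn$-module $\widetilde{\DDer_S(A)}_\nn$ with the tangent sheaf $\Theta^1\Rep_\nn(A)$. Smoothness of $A$ guarantees that $\Omega^1_S A$, and therefore its dual $\DDer_S(A)=\Theta^1_S A$, is a finitely generated projective $A$-bimodule. The identification then comes from the explicit formula: a double derivation $\partial \in \DDer_S(A)$ together with a pair of indices $(r,s)$ defines a derivation $\partial_{rs}$ of $A_\nn$ by $a_{uv} \mapsto \partial(a)'_{us}\partial(a)''_{rv}$; the fact that tangent vectors to $\Rep_\nn(A)$ at $\rho$ are precisely $\Der_S(A,\End(V)_\rho)$ (\cite[\S7]{VdB}) shows that, when $A$ is smooth, these derivations exhaust $\Theta^1\Rep_\nn(A)$ freely over $A_\nn$. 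Combining the two steps gives $(T^\ast A)_\nn \cong \Sym_{A_\nn}(\Theta^1\Rep_\nn(A)) \cong \O(T^\ast \Rep_\nn(A))$, yielding the scheme isomorphism~\eqref{eq:TastRep}.

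For the Poisson structure, I apply~\eqref{eq:doublePrep} to the three defining cases of the Schouten-Nijenhuis bracket~\eqref{eq:dPcoteven}. The case $\ldb a, b\rdb = 0$ immediately gives $\{a_{rs}, b_{uv}\} = 0$, so base functions Poisson-commute. The case $\ldb \partial, a\rdb = -\partial(a)$ combined with the explicit formula above yields $\{\partial_{rs},a_{uv}\} = -\partial_{rs}(a_{uv})$, matching the canonical pairing between a vertical linear function and a horizontal one. The case $\ldb \partial, \delta\rdb = -\ldb\partial,\delta\rdb_l - \ldb\partial,\delta\rdb_r$ unpacks, via~\eqref{eq:doublePrep} and the definitions~\eqref{eq:dPlrfinal}, into the Lie bracket of the vector fields $\partial_{rs}$ and $\delta_{uv}$. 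Since these three identities uniquely characterise the canonical symplectic bracket on a cotangent bundle (it is determined by its values on functions and linear-on-fibre functions), the induced Poisson structure on $(T^\ast A)_\nn$ coincides with the canonical one on $T^\ast\Rep_\nn(A)$.

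The main obstacle is the smoothness step: without smoothness the map from $\widetilde{\DDer_S(A)}_\nn$ to $\Theta^1\Rep_\nn(A)$ need not be an isomorphism, and Van den Bergh's explicit index computation is the only place where the hypothesis enters essentially. All other steps are formal consequences of generator-relation presentations and the explicit formulas of Sections~\ref{2} and~\ref{3}.
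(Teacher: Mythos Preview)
Your argument is correct and, for the scheme isomorphism, essentially identical to the paper's: both reduce $(T_A M)_\nn$ to $\Sym_{A_\nn}(M_\nn)$ via Van den Bergh's bimodule functor and then invoke the smoothness hypothesis to identify $(\DDer_S A)_\nn$ with $\Der(A_\nn)$ (the paper cites \cite[Proposition~3.3.4]{VdB1} for exactly this step, which is the content of your explicit formula $\partial_{rs}(a_{uv}) = \partial(a)'_{us}\partial(a)''_{rv}$).

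For the Poisson statement the two arguments diverge slightly. The paper does not verify the three generator cases directly; instead it cites \cite[Proposition~7.6.1]{VdB}, which shows that the \emph{odd} double Schouten--Nijenhuis bracket on $T^\ast_\odd A$ induces the ordinary Schouten--Nijenhuis bracket on $T^\ast[-1]\Rep_\nn(A)$, and then transfers this to the even setting via the correspondence of Proposition~\ref{prop:0shift} (the $0$-shifted bracket on $T^\ast A$ agrees on generators, up to a global sign, with the $(-1)$-shifted one on $T^\ast_\odd A$, and the same holds on the commutative side). Your direct generator check is a perfectly valid alternative and arguably more self-contained; the paper's route has the advantage of outsourcing the delicate third case $\ldb\partial,\delta\rdb$ --- where one must unwind $\ldb\partial,\delta\rdb_l + \ldb\partial,\delta\rdb_r$ through~\eqref{eq:doublePrep} and match it to the Lie bracket of the induced vector fields --- to an existing computation in \cite{VdB}. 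Your sketch of that third case (``unpacks \ldots\ into the Lie bracket'') is correct but is precisely the place where the work lies, so if you expand your proof that is the step deserving detail.
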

\begin{proof} Let us consider the Van den Bergh's functor $(-)_\nn : A-\Bimod \to A_\nn-\Mod$, which has the property that for $\D \in A-\Bimod$ and any positive shifting $r=0,1,\dots$:
\[
(T_A \D[r])_\nn \cong \Sym_{A_\nn} (\D_\nn[r])\,.
\]
When algebra $A$ is smooth, the Van den Bergh's functor sends the $A$-bimodule of double derivations to the $A_\nn$-module of its derivations: $\DDer_S(A) \to \Der(A_\nn)$ (\cite[Proposition 3.3.4]{VdB1}). As a consequence the representation functor sends odd and even cotangent bundle to the odd and even cotangent bundles of the representation scheme:
\[
\begin{aligned}
&(T^\ast_\odd A )_\nn \cong \Sym_{A_\nn}(\Der(A_\nn)[1]) = \O ( T^\ast[-1] \Rep_\nn(A))\, ,\\
& (T^\ast A)_\nn \cong \Sym_{A_\nn} (\Der(A_\nn)) = \O(T^\ast \Rep_\nn(A))\, .
\end{aligned}
\]
\cite[Proposition 7.6.1]{VdB} proves that the induced bracket on $T^\ast[-1]\Rep_\nn(A)$ is the Schouten-Nijenhuis bracket. But then, because of the relationship between the natural Poisson structure on $T^\ast\Rep_\nn(A)$ and the Schouten-Nijenhuis one (it is --- with a minus sign\footnote{This depends also on the conventions on what is the `natural symplectic form on a cotangent bundle'. Here we take the one for which the Poisson bracket $\{x,y\} =1$ if $x$ denotes a coordinate on the basis and $y$ on the fiber.} --- the same on the generators), and the relationship between the double Poisson structures on $T^\ast_\odd A$ and $T^\ast A$ (Theorem~\ref{thm:SN} and Proposition~\ref{prop:0shift}) the result follows.
\end{proof}


\subsection{Chevalley-Eilenberg and BRST}
\label{4.3}

In this Section we complete the last step of the two-step derived Poisson reduction by considering the Chevalley-Eilenberg functor and its composite with the derived zero locus, the BRST construction. We show that the noncommutative definitions in \S\ref{3.4} give the corresponding one in the commutative world.

Let us start from the Chevalley-Eilenberg functor~\eqref{eq:CE} which is the coproduct over $T_S(L)$ with $T_S(L\oplus L^\ast[-1])$, with a twisted differential. By the discussion in~\eqref{eq:tsl1} this differential graded algebra ($d\eta_i=- \eta_i^2$) is sent under the representation functor to the commutative Chevalley-Eilenberg complex for the module $\Sym(\gl_\nn)$:
\begin{equation}
T_S(L\oplus L^\ast[-1])_\nn \cong \Sym(\gl_\nn) \otimes_k \Sym(\gl_\nn^\ast[-1]) = C(\gl_\nn, \Sym(\gl_\nn))\, ,
\end{equation}
As a consequence (using the monoidal properties of the representation functor) it is easy to see that for any noncommutative Hamiltonian space $A \in \DGPPA_{T_S(L)}$ the Chevalley-Eilenberg complex corresponds to the usual one:
\begin{equation}
\label{eq:CEA}
\CE(A)_\nn \cong A_\nn \otimes_k \Sym(\gl_\nn^\ast[-1]) = C(\gl_\nn, A_\nn)\,.
\end{equation} 
In fact the twisted differential on $C(\gl_\nn,A_\nn)$ is obtained by adding the terms commutators of Chevalley-Eilenberg generators with elements of $A_\nn$, and it is induced exactly from the twisting defined on the noncommutative Chevalley-Eilenberg complex~\eqref{eq:dCE}.

In other words we have the following result:
\begin{Lem}
\label{thm:cerep}
There is a commutative diagram
\begin{equation}
\label{eq:cerep}
\begin{tikzcd}
  \DGPPA_{T_S(L)}  \arrow{r}{(-)_\nn} \arrow{d}{\CE}
    & \CDGPA_{\Sym(\gl_\nn)} \arrow{d}{C(\gl_\nn, -)}  \\
\DGA_{T_S(L\oplus L^\ast[-1])} \arrow{r}{(-)_\nn} 
&\CDGA_{C(\gl_\nn,\Sym(\gl_\nn))}
 \end{tikzcd}
\end{equation}
\end{Lem}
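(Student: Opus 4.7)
The plan is to establish the isomorphism $\CE(A)_\nn \cong C(\gl_\nn, A_\nn)$ at the level of underlying graded algebras using the cocontinuity of the representation functor, and then to match the differentials by exploiting the decomposition $d = d_\old + d_\CE$ furnished by Lemma~\ref{Lem:d1d2}. Naturality in $A$ and commutativity of the square (4.3.2) will then follow from functoriality of the coproduct and of $(-)_\nn$.

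First I would unfold the definition $\CE(A) = A \amalg_{T_S(L)} T_S(L \oplus L^\ast[-1])$ and apply the cocontinuity part of Theorem~\ref{thm:repham} to get
\begin{equation}
\CE(A)_\nn \;\cong\; A_\nn \otimes_{\Sym(\gl_\nn)} T_S(L \oplus L^\ast[-1])_\nn \, .
\end{equation}
Using the identifications in \eqref{eq:tsl1} together with the analogous computation $T_S(L^\ast[-1])_\nn \cong \Sym(\gl_\nn^\ast[-1])$ (same proof: $L^\ast$ is an $S$-bimodule of ``loops'' and its representations give $\gl_\nn[1]$), the monoidality of the representation functor on free products over $S$ yields
\begin{equation}
T_S(L \oplus L^\ast[-1])_\nn \;\cong\; \Sym(\gl_\nn) \otimes_k \Sym(\gl_\nn^\ast[-1]) = C(\gl_\nn,\Sym(\gl_\nn))\, .
\end{equation}
Substituting back gives the desired graded-algebra isomorphism
\begin{equation}
\CE(A)_\nn \;\cong\; A_\nn \otimes_k \Sym(\gl_\nn^\ast[-1]) \;=\; C(\gl_\nn, A_\nn)\, ,
\end{equation}
and the structure map from $C(\gl_\nn,\Sym(\gl_\nn))$ is manifestly preserved because it is induced from the structure map $T_S(L\oplus L^\ast[-1]) \to \CE(A)$.

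Next I would check that the isomorphism intertwines differentials by treating the two pieces of $d = d_\old + d_\CE$ separately. The ``old'' piece is trivial: $(-)_\nn$ is a dg functor, so $d_\old$ on $\CE(A)$ goes to the differential on $A_\nn$ extended by zero on the CE generators. For the CE piece, on the generators $\eta_i \in L^\ast[-1]$ the formula $d_\CE \eta_i = -\eta_i^2$ translates via (4.1.1) into $d(\eta_i)_{rs} = -\sum_u (\eta_i)_{ru}(\eta_i)_{us}$; assembled over $i\in I$ this is precisely the Koszul/CE differential on $\Sym(\gl_\nn^\ast[-1])$ dual to the Lie bracket of $\gl_\nn$. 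For the action piece, the formula $d_\CE a = -[\sum_i \eta_i, a]$ translates on generators $a_{rs} \in A_\nn$ into $\sum_u \bigl(a_{ru}\eta_{us} - \eta_{ru}a_{us}\bigr)$, which (after using the odd graded commutativity of the $\eta$'s) is exactly $\sum_{u,v} \eta_{uv}\,(E_{uv}\cdot a_{rs})$, where $E_{uv}\cdot(-)$ denotes the infinitesimal gauge action of $\gl_\nn$ on $A_\nn$. The latter coincides with the Lie algebra action used to build $C(\gl_\nn, A_\nn)$ because Theorem~\ref{thm:repham} guarantees that $\Sym(\gl_\nn)\to A_\nn$ is a Poisson map, so the bracket by an element of $\gl_\nn \subset \Sym(\gl_\nn)$ acts on $A_\nn$ by the gauge derivations.

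The main obstacle is this last matching of the commutator $-[\eta,a]$ with the classical CE action term: it is where signs must be tracked carefully and where the conjugation-action interpretation of the Hamiltonian/Poisson map must be invoked. Once those identifications are in place, naturality of the isomorphism with respect to morphisms $A\to A'$ in $\DGPPA_{T_S(L)}$ is immediate, since both constructions are defined by coproducts and both are functorial in the second factor; this gives commutativity of the square \eqref{eq:cerep} as functors.
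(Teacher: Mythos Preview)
Your proof is correct and follows essentially the same approach as the paper: use cocontinuity of $(-)_\nn$ on the coproduct defining $\CE(A)$ to obtain the graded-algebra isomorphism $\CE(A)_\nn \cong A_\nn \otimes_k \Sym(\gl_\nn^\ast[-1])$, and then match the two pieces $d_\old$ and $d_\CE$ of the differential separately. The paper compresses the differential check into the sentence ``the twisted differential on $C(\gl_\nn,A_\nn)$ is obtained by adding the terms commutators of Chevalley-Eilenberg generators with elements of $A_\nn$, and it is induced exactly from the twisting defined on the noncommutative Chevalley-Eilenberg complex'', whereas you spell it out explicitly on generators, which is a welcome addition.

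One minor point: when you justify that the commutator $-[\eta,a]$ induces the gauge action on $A_\nn$, you cite Theorem~\ref{thm:repham}, which concerns the Hamiltonian subcategory $\PPA_{T_S(L)}^H$, while the present lemma is stated for all of $\DGPPA_{T_S(L)}$. The identification of the matrix-commutator expression with the standard $\gl_\nn$-action on $A_\nn$ holds for any $A$ (it is just the infinitesimal form of the conjugation action on representations and uses nothing about the Poisson structure), so the argument goes through; but if you want to phrase it via the Poisson map $\Sym(\gl_\nn)\to A_\nn$, the relevant input is Theorems~\ref{thm:dpobjects} and~\ref{thm:dpmorphisms} rather than Theorem~\ref{thm:repham}.
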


As a consequence also the noncommutative BRST construction in~\eqref{eq:BRST1} becomes the commutative one:
\begin{equation}
\label{eq:BRST2}
(\BRST(A))_\nn = (A \amalg_S T_S(L[1]\oplus L^\ast[-1]) )_\nn \cong A_\nn \otimes_k \Sym( \gl_\nn[1] \oplus \gl_\nn^\ast[-1])\, .
\end{equation}

The double Poisson structure on $\BRST(A)$ (free product of the one on $A$ and the canonical one on $T_S(L[1]\oplus L^\ast[-1])$), induces the Poisson structure on~\eqref{eq:BRST2} which is the tensor product of the one on $A_\nn$ and the canonical Poisson structure on $\Sym(\gl_\nn[1]\oplus\gl_\nn^\ast[-1])$ (the extension of the natural pairing $\gl_\nn \otimes \gl_\nn^\ast \to k$).

\begin{Thm}
\label{thm:BRST}
There is a commutative diagram between the noncommutative and the commutative BRST constructions:
\begin{equation}
\label{eq:BRSTrep}
\begin{tikzcd}
  \PPA_{T_S(L)}^H  \arrow{r}{(-)_\nn} \arrow{d}{\BRST}
    & \CPA_{\Sym(\gl_\nn)}^H \arrow{d}{\BRST}  \\
\DGPPA_{T_S(L\oplus L[1]\oplus  L^\ast[-1])} \arrow{r}{(-)_\nn} 
&\CDGPA_{\Sym(\gl_\nn \oplus \gl_\nn[1] \oplus \gl_\nn^\ast[-1])}
 \end{tikzcd}
\end{equation}
\end{Thm}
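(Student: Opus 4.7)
The plan is to decompose the noncommutative BRST construction as $\BRST = \CE \circ \Sh$ and combine three already-established compatibilities: the one for Hamiltonian spaces (Theorem \ref{thm:repham}), the one for the Shafarevich/Koszul model (Lemma \ref{lem:shakos}), and the one for Chevalley--Eilenberg (Lemma \ref{thm:cerep}). More precisely, starting from $A \in \PPA_{T_S(L)}^H$, I would first apply Lemma \ref{lem:shakos} to obtain $\Sh(A)_\nn \cong A_\nn \otimes_k \Sym(\gl_\nn[1])$ as a model for the commutative derived zero locus. Then I would apply the Chevalley--Eilenberg compatibility to $\Sh(A)$, viewed as an object of $\DGPPA_{T_S(L)}$, to obtain a canonical isomorphism $\CE(\Sh(A))_\nn \cong C(\gl_\nn, \Sh(A)_\nn)$. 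Composing gives the desired identification of underlying dg algebras.

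The second step is to match the Poisson structure. The double Poisson bracket on $\BRST(A)$ is the free product of the one on $A$ with the canonical one on $T_S(L[1] \oplus L^\ast[-1])$ (with only nonzero brackets $\ldb \vartheta_i, \eta_j\rdb = \delta_{ij} e_i \otimes e_i$). By Theorem \ref{thm:dpobjects} the induced Poisson bracket on generators is computed componentwise via formula \eqref{eq:doublePrep}, and cocontinuity of $(-)_\nn$ (Theorem \ref{thm:repham}) identifies the push-out $\amalg_S$ with the tensor product $\otimes_k$ of commutative Poisson algebras. A direct check on generators $\vartheta_{i,rs}$ and $\eta_{j,uv}$ shows the induced bracket reproduces the canonical pairing $\gl_\nn[1] \otimes \gl_\nn^\ast[-1] \to k$ tensored with the pre-existing Poisson structure on $A_\nn$, which is precisely the commutative BRST Poisson structure.

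The third step is to match the twisted differential. Here I would invoke Remark \ref{rem:BRSTcharge}: the differential on $\BRST(A)$ is obtained as $d = \{\gamma, -\}$ for the noncommutative charge $\gamma = \sum_i \eta_i \delta_i - \sum_i \eta_i^2 \vartheta_i$. By Remark \ref{rem:charge}, the induced differential on $\BRST(A)_\nn$ equals $\{\tr(\gamma), -\}$, and
\[
\tr(\gamma) = \sum_{i,r} \eta_{i,rr} \delta_{i,rr} \,-\, \sum_{i,r,u} \eta_{i,ru}\eta_{i,ur}\vartheta_{i,rr}
\]
is, up to the standard identifications, precisely the classical BRST charge on $A_\nn \otimes_k \Sym(\gl_\nn[1] \oplus \gl_\nn^\ast[-1])$ (Hamiltonian term plus Chevalley--Eilenberg quadratic term). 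Hence the two differentials agree.

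Finally, functoriality of the assignment $A \mapsto \BRST(A)_\nn$ follows from the functoriality of $\CE$, $\Sh$ and $(-)_\nn$, which is essentially the content of Theorem \ref{thm:dpmorphisms} together with Theorem \ref{thm:repham}. The main obstacle I anticipate is the bookkeeping of the twisted differential: one must verify that $\tr$ applied to the noncommutative charge lands in the correct BRST charge on the commutative side, including matching the signs from the graded commutators in \eqref{eq:dCE} with the Koszul signs in the commutative $C(\gl_\nn, A_\nn)$. All other steps are formal consequences of already-proved compatibilities, so the result follows by assembling them along the factorisation $\BRST = \CE \circ \Sh$.
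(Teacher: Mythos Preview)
Your proposal is correct and follows essentially the same route as the paper: identify the underlying cdg algebra via cocontinuity of $(-)_\nn$ (so that $A \amalg_S T_S(L[1]\oplus L^\ast[-1])$ goes to $A_\nn \otimes_k \Sym(\gl_\nn[1]\oplus\gl_\nn^\ast[-1])$), check that the free-product double Poisson structure induces the tensor-product Poisson structure, and verify the differential via the noncommutative charge and Remark~\ref{rem:charge}. The paper is slightly more direct (it does not explicitly route through $\Sh$ before $\CE$, but uses the same ingredients), and your explicit trace formula for $\gamma$ has a small index slip in the cubic term (it should be a full triple sum $\sum_{r,s,u}(\eta_i)_{rs}(\eta_i)_{su}(\vartheta_i)_{ur}$), but this does not affect the argument.
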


\begin{Notation} For a noncommutative Hamiltonian space $A$, we denote the associated commutative BRST complex of dimension $\nn$ (the object obtained in the right-bottom side of the diagram~\eqref{eq:BRSTrep} by following either of the two paths): 
\begin{equation}
\label{eq:BRST3}
\B_\nn(A):=\BRST(A)_\nn \cong A_\nn \otimes \Sym(\gl_\nn[1]\oplus\gl_\nn^\ast[-1])\,.
\end{equation}
\end{Notation}

\begin{Remark} 
Remember that the differential on the noncommutative BRST complex is induced by a charge $d_\BRST =\{ \gamma, -\}$ (Remark~\ref{rem:BRSTcharge}). As a consequence the induced differential on the commutative BRST complex $\B_\nn(A)$ is also induced by a charge, obtained as the trace of the noncommutative charge (see Remark~\ref{rem:charge}):
\[
d_{\B_\nn(A)} = \{ \tr(\gamma) , - \}\,.
\]
It is easy to verify that this is indeed the usual BRST charge associated to the $\gl_\nn$ action on $A_\nn$.
\end{Remark}

Finally, we notice that from the noncommutative dg algebra maps~\eqref{eq:link} linking the BRST complex, the Shafarevich complex and the zero locus, we obtain the analogous well known (commutative) dg algebra maps:
\begin{equation}
\label{eq:link2} 
\B_\nn(A) \xrightarrow{ {(\ev_{\eta=0})}_\nn} \C_\nn(A) \fibr \O(\mu_\nn^{-1}(0))\, . 
\end{equation}

\section{Some homological computations and examples}
\label{5}

We devote this last Section to some homological computations (commutative BRST homology) and some examples, such as noncommutative cotangent bundles of path algebras of quivers, and in particular the example of the Jordan quiver --- the scheme of commuting matrices --- and similar associated schemes.


\subsection{Computation of the Chevalley-Eilenberg (co)homology}
\label{5.1}

In this Section we compute the (commutative) Chevalley-Eilenberg homology for representation algebras. Let us recall first that in our conventions (differentials of degree $-1$) the Chevalley-Eilenberg complex for a $\lieg$-module $M$ is the following chain complex concentrated in non-positive degrees:
\begin{equation}
C(\lieg, M) := \Hom_k(\Sym(\lieg[1]),M) = [ M \longrightarrow \lieg^\ast \otimes M \longrightarrow \Lambda^2 \lieg^\ast \otimes M \longrightarrow  \dots ]\, .
\end{equation}
Its homology is essentially the Lie algebra cohomology (the cohomology of the usual cochain complex version of Chevalley-Eilenberg):
\[
 \HH_\bullet( C(\lieg, M )) =\HH^{-\bullet}(\lieg,M)\, .
\]

Let us start from a double Poisson algebra $A \in \DGPPA_{T_S(L)}$ and, for a dimension vector $\nn$, the induced Poisson morphism $\Sym(\gl_\nn) \to A_\nn$ and the associated Chevalley-Eilenberg complex:
\[
C(\gl_\nn, A_\nn)\,.
\]
The calculation of its homology is a somewhat classical result:

\begin{Thm}
\label{thm:CEhom}
Let $A\in \DGPPA_{T_S(L)}$ and $\nn$ a dimension vector. The Chevalley-Eilenberg homology of the $\gl_\nn$-module $A_\nn$ is 
\begin{equation}
\label{eq:CEhom}
\HH_\bullet( C(\gl_\nn, A_\nn)) \cong \HH_\bullet(A_\nn)^{\GL_\nn} \otimes \HH^{-\bullet}(\gl_\nn,k)\, .
\end{equation} 
\end{Thm}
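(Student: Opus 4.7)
The plan is to reduce the computation to a Künneth-type tensor product by exploiting the reductiveness of $\GL_\nn$. Since $\GL_\nn$ is reductive in characteristic zero and the $\GL_\nn$-invariants functor on rational modules is exact, one obtains a $\GL_\nn$-equivariant, differential-preserving splitting $\C_\nn(A) = \C_\nn(A)^{\GL_\nn} \oplus \C_\nn(A)^\perp$, where $\C_\nn(A)^\perp$ is the direct sum of non-trivial isotypic components. Exactness then gives $\HH_\bullet(\C_\nn(A)^{\GL_\nn}) = \HH_\bullet(\C_\nn(A))^{\GL_\nn}$, and since the $\gl_\nn$-action preserves each summand, the BRST complex $\B_\nn(A) = C(\gl_\nn, \C_\nn(A))$ splits as a direct sum of subcomplexes. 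This reduces the theorem to computing the homology of the two pieces separately.

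On the invariant piece $C(\gl_\nn, \C_\nn(A)^{\GL_\nn})$ the induced $\gl_\nn$-action on $\C_\nn(A)^{\GL_\nn}$ is trivial, so the ``action'' summand of the Chevalley-Eilenberg differential vanishes and only the pure bracket differential on $\Sym(\gl_\nn^\ast[-1])$ survives. Consequently the subcomplex factors as the tensor product of two subcomplexes with commuting differentials: $\C_\nn(A)^{\GL_\nn}$ with its internal (Koszul plus $A_\nn$-internal) differential, and $\Sym(\gl_\nn^\ast[-1]) = C(\gl_\nn, k)$ with the pure Chevalley-Eilenberg differential. The Künneth formula over the field $k$ then yields exactly $\HH_\bullet(\C_\nn(A))^{\GL_\nn} \otimes_k \HH^{-\bullet}(\gl_\nn, k)$.

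For the non-invariant piece $C(\gl_\nn, \C_\nn(A)^\perp)$ I would prove acyclicity via the filtration by Chevalley-Eilenberg degree $F^p = \C_\nn(A)^\perp \otimes \Sym^{\geq p}(\gl_\nn^\ast[-1])$, which is bounded since $\Sym(\gl_\nn^\ast[-1]) = \Lambda^\bullet \gl_\nn^\ast$ is finite-dimensional, so the associated spectral sequence converges. The $E_1$-page is $\HH_\bullet(\C_\nn(A)^\perp) \otimes \Sym(\gl_\nn^\ast[-1])$ with $d_1$ the induced Chevalley-Eilenberg differential, giving $E_2 = \HH^{-\bullet}(\gl_\nn, \HH_\bullet(\C_\nn(A)^\perp))$. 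Invoking the classical identity
\[
\HH^\bullet(\gl_\nn, W) \cong \HH^\bullet(\gl_\nn, k) \otimes_k W^{\GL_\nn}
\]
for rational $\GL_\nn$-modules $W$, and observing that $\HH_\bullet(\C_\nn(A)^\perp)$ has no $\GL_\nn$-invariants (again by exactness of invariants), the $E_2$-page vanishes, the spectral sequence collapses to zero, and this piece contributes nothing.

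The main obstacle will be the invocation of the reductive-cohomology identity above, which is where all the algebraic content is concentrated. Via the $\GL_\nn$-isotypic decomposition it reduces to the vanishing $\HH^\bullet(\gl_\nn, V) = 0$ for every non-trivial finite-dimensional irreducible $\gl_\nn$-module $V$; using the splitting $\gl_\nn = \slin_\nn \oplus k\cdot\mathrm{Id}$ together with a Künneth argument, $V$ has either non-trivial central character (in which case the Chevalley-Eilenberg complex of the one-dimensional abelian factor with non-trivial-character coefficients is a nonzero map of lines and hence acyclic) or trivial central character and non-trivial $\slin_\nn$-action (in which case Whitehead's vanishing theorem for semisimple Lie algebras via the Casimir argument applies). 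One must then extend this cleanly to the possibly infinite-dimensional rational module $\HH_\bullet(\C_\nn(A)^\perp)$, which is routine since each isotypic component is a subcomplex and the bounded filtration commutes with direct sums.
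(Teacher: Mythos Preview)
Your argument is correct, modulo a notational slip: the theorem is stated for a general $A\in\DGPPA_{T_S(L)}$ and its representation algebra $A_\nn$, whereas you wrote everything in terms of the Koszul complex $\C_\nn(A)=(\Sh(A))_\nn$. That is the content of the \emph{corollary} that follows the theorem (obtained by feeding $\Sh(A)$ in place of $A$), but the proof is literally the same with $A_\nn$ in place of $\C_\nn(A)$, so this is cosmetic.

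The route you take is a genuine variant of the paper's. The paper runs a single spectral sequence on all of $C(\gl_\nn,A_\nn)$, filtered by the number of $\eta$'s: the $E^1$-page is $C(\gl_\nn,\HH_\bullet(A_\nn))$, the $E^2$-page is $\HH_\bullet(A_\nn)^{\GL_\nn}\otimes \HH^{-\bullet}(\gl_\nn,k)$ via the reductive identity, and then one must argue that the sequence degenerates at $E^2$ (the paper says ``by bidegree inspection''). You instead first split $A_\nn=A_\nn^{\GL_\nn}\oplus A_\nn^\perp$ as dg $\GL_\nn$-modules, which immediately splits the Chevalley--Eilenberg complex. On the invariant summand the action is trivial, so the complex is an honest tensor product and K\"unneth gives the answer directly, with no spectral sequence and no degeneration to check. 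On the non-invariant summand your spectral sequence has $E_2=0$, so collapse is automatic. Both approaches hinge on the same key input---the identity $\HH^\bullet(\gl_\nn,W)\cong W^{\GL_\nn}\otimes \HH^\bullet(\gl_\nn,k)$ for rational $\GL_\nn$-modules---but your organization has the pleasant feature that it replaces the degeneration step by a trivial one, at the cost of an extra preliminary splitting. The paper's version is more streamlined but leaves the $E^2$-degeneration as a remark; in effect, your inclusion $C(\gl_\nn,k)\otimes A_\nn^{\GL_\nn}\hookrightarrow C(\gl_\nn,A_\nn)$ is exactly what one would use to justify that degeneration explicitly.
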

\begin{proof}
We recall that the Chevalley-Eilenberg complex for $A_\nn$ is obtained applying the representation functor to the noncommutative one: $C(\gl_\nn,A_\nn) \cong (\CE(A))_\nn$. The differential on $C(\gl_\nn,A_\nn)$ is the sum of the two super-commuting differentials (induced from the two super-commuting differentials~\eqref{eq:d1d2}):
\[
d= d_1 + d_2\,,
\]
where $d_1$ is the old differential on $A_\nn$ extended to zero elsewhere and $d_2$ is the Chevalley-Eilenberg differential extended to commutators on $A_\nn$ (the $\gl_\nn$ action on $A_\nn$ induced from $\GL_\nn \acts A_\nn$). To compute the homology we can use the `classical' technique consisting of the following steps:
\begin{enumerate}
\item We consider the filtration 
\[
C(\gl_\nn,A_\nn)= F_0 \supset F_1 \supset F_2 \supset \dots \supset F_{N} \supset F_{N+1} = 0 \,, \qquad \left(N= \dim(\gl_\nn) = \sum_i n_i^2\right)\,,
\]
where $F_p$ is the linear span of monomials containing at least $p$ terms $\eta$, and consider the associated spectral sequence $\{E^r_{p,q}\}$.
\item The differential on the associated graded (zero page) contains only $d_1$, therefore:
\[
E^1 \cong \Sym(\gl_\nn^\ast[-1])\otimes \HH_\bullet(A_\nn) = C\big(\gl_\nn ,\HH_\bullet(A_\nn)\big)\,.
\]
with $d^{(1)}: E^1 \to E^1$ being the Chevalley-Eilenberg differential for the $\gl_\nn$ graded module $\HH_\bullet(A_\nn)$ (now without differential).
\item Because $\gl_\nn$ is a finite dimensional reductive Lie algebra, the Chevalley-Eilenberg homology of the graded module $\HH_\bullet(A_\nn)$ is 
\[
E^2 \cong \HH_\bullet(A_\nn)^{\GL_\nn} \otimes \HH^{-\bullet}(\gl_\nn,k) \,.
\]
\item By (bi)degree inspection the differential on the second page is zero $d^{(2)}=0$. Therefore the sequence collapses at $E^2=E^3=\dots =  E^\infty$ and the second page computes the homology.
\end{enumerate}
\end{proof}

Now let us start from a noncommutative Hamiltonian space $A \in \mathtt{PPAlg}_{T_S(L)}^H $, and consider for some dimension $\nn$ the associated commutative BRST complex, which is the Chevalley-Eilenberg complex for the Koszul complex:
\begin{equation}
\label{eq:BRST3}
\BRST(A_\nn) =\B_\nn(A) = C(\gl_\nn, \C_\nn(A) )\, .
\end{equation}
As a corollary of the previous Theorem (using $\Sh(A)$ instead of $A$):
\begin{Cor}
Let $A \in \mathtt{PPAlg}_{T_S(L)}^H$. The homology of the BRST complex for $A_\nn$ is the tensor product of the $\GL_\nn$-invariant homology of the Koszul complex with the Lie algebra (co)homology of $\gl_\nn$:
\begin{equation}
\label{eq:BRST4}
\HH_\bullet(\B_\nn(A)) \cong \HH_\bullet (\C_\nn(A))^{\GL_\nn} \otimes \HH^{-\bullet}(\gl_\nn,k)\,.
\end{equation}
\end{Cor}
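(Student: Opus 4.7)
The plan is to reduce the corollary directly to the previously established Chevalley-Eilenberg homology computation (the theorem stating that $\HH_\bullet(C(\gl_\nn,A_\nn))\cong \HH_\bullet(A_\nn)^{\GL_\nn}\otimes \HH^{-\bullet}(\gl_\nn,k)$), applied to the Shafarevich model of the derived zero locus rather than to $A$ itself. Concretely, I would start by recalling the identifications
\begin{equation*}
\B_\nn(A) \;=\; \BRST(A)_\nn \;\cong\; \CE(\Sh(A))_\nn \;\cong\; C\bigl(\gl_\nn,(\Sh(A))_\nn\bigr) \;=\; C\bigl(\gl_\nn,\C_\nn(A)\bigr),
\end{equation*}
which come from the definition $\BRST=\CE\circ \Sh$ together with the commutative diagram of Lemma~\ref{thm:cerep} and the identification $(\Sh(A))_\nn\cong \C_\nn(A)$ of Lemma~\ref{lem:shakos}. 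In particular the BRST differential on $\B_\nn(A)$ splits, exactly as in the proof of the preceding theorem, into the sum $d=d_1+d_2$ of the internal differential on $\C_\nn(A)$ (extended by zero on the $\eta$-generators) and the Chevalley--Eilenberg differential for the $\gl_\nn$-action on $\C_\nn(A)$.

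Next I would verify that $\Sh(A)$ lies in $\DGPPA_{T_S(L)}$, so that the previous theorem applies verbatim with $A$ replaced by $\Sh(A)$. The underlying $T_S(L)$-algebra structure is inherited from $A$ through the first factor of the pushout $\Sh(A)=A\amalg_S T_S(L[1])$, and the coproduct construction of Proposition~\ref{prop:free} endows this with the free-product double Poisson structure; the twisted Shafarevich differential $d\vartheta_i=\delta_i$ is compatible with the double bracket because it is induced (via Proposition~\ref{prop:charge}) by a noncommutative charge. Thus $\Sh(A)\in \DGPPA_{T_S(L)}$ is a legitimate input to Theorem~\ref{thm:CEhom}.

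Applying that theorem to $\Sh(A)$ and using $(\Sh(A))_\nn=\C_\nn(A)$ yields
\begin{equation*}
\HH_\bullet(\B_\nn(A)) \;=\; \HH_\bullet\bigl(C(\gl_\nn,\C_\nn(A))\bigr) \;\cong\; \HH_\bullet(\C_\nn(A))^{\GL_\nn}\otimes \HH^{-\bullet}(\gl_\nn,k),
\end{equation*}
which is precisely the claim. I do not anticipate any serious obstacle: the only point that requires a line of care is the check that the spectral sequence argument of the theorem really works when the coefficient module has a nontrivial internal differential, but this is automatic from the construction because the filtration by the number of $\eta$-generators is preserved by both $d_1$ and $d_2$, and $d_1$ is exactly the Koszul differential of $\C_\nn(A)$ so that the $E^1$-page computes $C(\gl_\nn,\HH_\bullet(\C_\nn(A)))$; reductivity of $\GL_\nn$ then collapses the sequence at $E^2$ exactly as before.
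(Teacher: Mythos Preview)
Your proposal is correct and follows exactly the paper's approach: the paper's proof is the single line ``using $\Sh(A)$ instead of $A$'' in Theorem~\ref{thm:CEhom}, and you have simply unpacked this substitution in detail. One small remark: your justification that $\Sh(A)\in\DGPPA_{T_S(L)}$ via a noncommutative charge is not needed (and is a bit delicate, since with the free-product double bracket $\ldb\vartheta_i,-\rdb=0$ the Shafarevich differential is not of the form $\{\gamma,-\}$); what the spectral sequence argument of Theorem~\ref{thm:CEhom} actually uses is only the $T_S(L)$-algebra structure and the resulting $\gl_\nn$-action on $(\Sh(A))_\nn$, which $\Sh(A)$ inherits from $A$ as you note.
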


The goal would be to actually compute $\HH_\bullet(\C_\nn(A))^{\GL_\nn}$ which in fact, even in the easier cases is not known. For example when $S=k$ (one vertex), and $A=T^\ast k[x] \cong k\langle x,y\rangle$, the zero locus $A/\mathcal{I} =k \langle x,y\rangle /\langle [x,y] \rangle = k[x,y]$ is a polynomial algebra in two variables, and the homology $\HH_\bullet(\C_n(A))^{\GL_n}$ is the $\GL_n$-invariant part of the Koszul homology for the scheme of commuting matrices. There is a conjecture (\cite[Conjecture 1]{BFPRW}) saying that the following diagonal restriction map is an isomorphism:
\[
\HH_\bullet(\C_n(A))^{\GL_n} =\HH_\bullet \left(k [x_{uv},y_{uv},\vartheta_{uv}] \right)^{\GL_n} \to k [x_u,y_u,\vartheta_u]^{S_n}\,.
\]

We are not able to fully compute the homology $\HH_\bullet(\C_n(A))^{\GL_n}$ but, using the Poisson structure on $\B_n(A)$ and trace maps, we can give another proof to a decomposition of $\HH_\bullet(\C_n(A))^{\GL_n}$ which was not previously known to us. We postpone the result to \S\ref{5.4}, after we have introduced the class of examples which are cotangent bundles of quiver path algebras.


\subsection{Path algebras of quivers}
\label{5.2}

In this Section we work out derived noncommutative Poisson reduction for cotangent bundles of path algebras of quivers $T^\ast kQ$. The reader who wants to have a more detailed introduction on the noncommutative geometry of quiver path algebras can read the main references \cite{CEG},\cite{EG}.

We fix $S=kI$ and we consider the path algebra $kQ$ of a quiver $Q=(Q_0=I,Q_1)$ with vertex set $I$. Path algebras are the simplest examples of algebras over $S$, because they are in fact free algebras over $S$, as we can write 
\[
kQ = T_S(M)\,,
\]
where $M=kQ_1$ is the $S$-bimodule which, as a vector space, is freely generated by the arrows. The space of double derivations for $kQ$ is rather easy to compute. For each arrow of the quiver $x \in Q_1$ we consider the following double derivation $\partial_x \in \DDer_S(kQ)$ defined on generators by
\begin{equation}
\label{eq:dex}
\partial_x(y) = \delta_{x,y} e_{t(x)} \otimes e_{s(x)} \,,
\end{equation}
where $s,t:Q_1 \to I$ are, respectively, the source and target map (we use the convention of concatenation of paths \emph{from right to left}). It is easy to check that $\DDer_S(kQ)$ is the free $kQ$-bimodule with generators $\{\partial_x\}_{x\in Q_1}$. We recall that the $kQ$-bimodule structure on $\DDer_S(kQ)$ is induced by the inner bimodule structure on $(kQ)^{\otimes 2}$, explicitely:
\[
(a \cdot \partial \cdot b )(c) = a \ast (\partial(c)) \ast b , \quad a,b,c\in A, \partial \in \DDer_S(kQ)\,.
\]
Because $S\subset kQ$ we can view $\DDer_S(kQ)$ also as a $S$-bimodule, that is a $I\times I$-graded vector space, and it is easy to see that for each arrow $x \in Q_1$ going from $i=s(x)$ to $j=t(x)$, the double derivation $\partial_x \in  \DDer_S(kQ)_{ji} $ is an ``arrow'' going in the opposite direction, from $j$ to $i$, in fact:
\begin{equation}
\label{eq:dualarrow}
(e_k \cdot \partial_x \cdot e_l )(y) = e_k \ast ( \delta_{x,y} e_j\otimes e_i) \ast e_l= \delta_{x,y} e_j e_l \otimes e_k e_i = \delta_{jl}\delta_{ik} \partial_x(y)\,, \quad \forall y \in Q_1\,.
\end{equation}
If we consider the doubled quiver $\overline{Q}$, obtained from $Q$ by adding for each arrow $x \in Q_1$ a dual arrow $x^\ast$ going in the opposite direction, we have:
\begin{Prop}
\label{prop:dualquiver}
There is a natural isomorphism between the noncommutative cotangent bundle of $kQ$, and the path algebra of the double quiver $k\overline{Q}$, given by:
\begin{equation}
\label{eq:dualquiver}
\begin{aligned}
 &k\overline{Q} \xrightarrow{\sim} T^\ast \left( kQ\right)=T_{kQ}(\DDer_S(kQ))\\
 &\begin{cases}
 x \longmapsto x\,,\\
 x^\ast \longmapsto \partial_x\,.
 \end{cases}
\end{aligned} 
\end{equation}
\end{Prop}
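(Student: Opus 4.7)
The plan is to prove the isomorphism by chaining together two universal properties rather than by writing down generators and relations. First I would observe that since $kQ = T_S(M)$ with $M = kQ_1$, the doubled path algebra can be written as $k\overline{Q} = T_S(M \oplus M^{\vee})$, where $M^{\vee}$ denotes the $S$-bimodule having the same underlying vector space as $M$ but with the source and target of each basis arrow swapped (so that $x^{\ast}$ satisfies $x^{\ast} = e_{s(x)} \cdot x^{\ast} \cdot e_{t(x)}$).

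Next I would identify the $S$-bimodule structure of the generating set of $\DDer_S(kQ)$. The paper has already stated that $\DDer_S(kQ)$ is a free $kQ$-bimodule on the generators $\{\partial_x\}_{x \in Q_1}$, and computation~\eqref{eq:dualarrow} shows $\partial_x = e_{s(x)} \cdot \partial_x \cdot e_{t(x)}$. Therefore the $S$-sub-bimodule they span is isomorphic, via $\partial_x \leftrightarrow x^{\ast}$, precisely to $M^{\vee}$. In particular, as $kQ$-bimodules,
\[
\DDer_S(kQ) \;\cong\; kQ \otimes_S M^{\vee} \otimes_S kQ.
\]

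The key structural fact I would then invoke is that for any $S$-algebra $A$ and any $S$-bimodule $N$ the $A$-bimodule tensor algebra of the free bimodule on $N$ agrees with the $S$-algebra free product:
\[
T_A\bigl( A \otimes_S N \otimes_S A \bigr) \;\cong\; A \ast_S T_S(N).
\]
This follows immediately from comparing universal properties: a map out of either side to an $S$-algebra $B$ under $A$ is the same datum as an $S$-bimodule map $N \to B$. Applied to $A = kQ = T_S(M)$ and $N = M^{\vee}$ it yields
\[
T^{\ast}(kQ) \;=\; T_{kQ}(\DDer_S(kQ)) \;\cong\; T_S(M) \ast_S T_S(M^{\vee}) \;\cong\; T_S(M \oplus M^{\vee}) \;=\; k\overline{Q},
\]
and unwinding the identifications one sees that under this composite isomorphism $x \in M$ maps to itself and $x^{\ast} \in M^{\vee}$ maps to $\partial_x$, which is the stated formula.

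I do not expect a serious obstacle here: the only nontrivial input is the observation that the generators $\partial_x$ have exactly the $S$-bimodule type of the reversed arrows, which the paper has already recorded in~\eqref{eq:dualarrow}. The adjunction statement for $T_A(A \otimes_S N \otimes_S A)$ is standard and reduces to a routine comparison of universal properties, so the main care is merely bookkeeping of idempotents and the right-to-left concatenation convention to make sure that $\partial_x$ really corresponds to $x^{\ast}$ rather than to $x$ itself.
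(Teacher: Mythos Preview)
Your proposal is correct and follows essentially the same route the paper takes, only more explicitly: the paper does not write out a proof environment for this proposition but simply records beforehand that $\DDer_S(kQ)$ is the free $kQ$-bimodule on $\{\partial_x\}$ and that each $\partial_x$ has the $S$-bimodule type of the reversed arrow (equation~\eqref{eq:dualarrow}), leaving the tensor-algebra identification implicit. Your use of the universal property $T_A(A\otimes_S N\otimes_S A)\cong A\ast_S T_S(N)$ and then $T_S(M)\ast_S T_S(M^\vee)\cong T_S(M\oplus M^\vee)$ is exactly the argument that unpacks this implicit step, so there is no genuine difference in approach.
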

We remark that the above isomorphism is an isomorphism of $kQ$-bimodules, and in particular of $S$-bimodules, that is $I\times I$-graded vector spaces (as shown in~\eqref{eq:dualarrow}). In terms of the $kQ$-bimodule basis $\{ \partial_x \}_{x\in Q_1} \subset  \DDer_S(kQ)$, the distinguished derivation is
\begin{equation}
\label{eq:momentelement}
\Delta = \sum\limits_{x \in Q_1}  [\partial_x,x ]\,.
\end{equation}
In fact it is sufficient to prove the equality~\eqref{eq:momentelement} on each arrow $y \in Q_1$ of the quiver:
\[
\begin{aligned}
&\sum\limits_{x\in Q_1} [\partial_x,x ] (y) = \sum\limits_{x\in Q_1}  \partial_x(y)\ast x -x\ast \partial_x(y) = (e_{t(y)} \otimes e_{s(y)})\ast y - y \ast ( e_{t(y)}\otimes e_{s(y)})= \\
&=y\otimes e_{s(y)} - e_{t(y)} \otimes y   =\Delta(y)\,.
\end{aligned}
\]

By the general construction explained in \S\ref{3.2}, the cotangent bundle $T^\ast (kQ)$ carries a natural double Poisson structure.
\begin{Prop}
\label{prop:dPdualquiver}
Under the isomorphism~\eqref{eq:dualquiver} the induced double Poisson structure on the path algebra of the doubled quiver $k\overline{Q}$ with only non-zero brackets the one pairing an arrow $x$ and its dual $x^\ast$ by:
\begin{equation}
\label{eq:dPdualquiver}
\ldb x , x^\ast \rdb=  e_{s(x)} \otimes e_{t(x)}\,, \quad \left( \ldb x^\ast ,x \rdb = - e_{t(x)}\otimes e_{s(x)}\right) \,.
\end{equation}
The noncommutative moment map $T_S(L) \to k\overline{Q}$ is the one sending $ t_i \mapsto \delta_i = e_i \left(\sum_x [x ,x^\ast] \right) e_i$.
\end{Prop}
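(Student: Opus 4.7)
The plan is to verify the two claims directly from the definitions, exploiting the fact that by Proposition~\ref{prop:0shift} a double Poisson bracket on the tensor algebra $T^\ast(kQ)=T_{kQ}(\DDer_S(kQ))$ is completely determined by its values on the two generating sets $kQ$ and $\DDer_S(kQ)$, and in fact further reduces to the $S$-generators of these, namely the arrows $\{y\}_{y\in Q_1}$ and the dual arrows $\{\partial_x\}_{x\in Q_1}$ (by the derivation/$S$-linearity property of Definition~\ref{def:nbracket} and Definition~\ref{def:slinear}). So there are just four families of brackets to check.

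First, $\ldb y, z\rdb=0$ for $y,z\in Q_1$ is immediate from the first formula in~\eqref{eq:dPcoteven}. Second, for $\partial_x \in \DDer_S(kQ)$ and $y\in Q_1$, the defining formula $\ldb \partial_x, y\rdb=-\partial_x(y)$ together with~\eqref{eq:dex} gives $\ldb \partial_x, y\rdb = -\delta_{x,y}\, e_{t(x)}\otimes e_{s(x)}$; applying the antisymmetry axiom $\ldb -,-\rdb = -\ldb -,-\rdb^\circ$ and transporting via the isomorphism~\eqref{eq:dualquiver} $\partial_x \leftrightarrow x^\ast$ yields $\ldb x, x^\ast\rdb = e_{s(x)}\otimes e_{t(x)}$ and zero for mismatched pairs, which is exactly the stated formula.

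The only nontrivial step is the third bracket, $\ldb \partial_x, \partial_y\rdb$, which must be shown to vanish in order to match the statement (no bracket between two dual arrows). Here I would unpack the Schouten-type definitions~\eqref{eq:dPlr}--\eqref{eq:dPlrfinal}, namely $\ldb \partial_x,\partial_y\rdb_l^\sim = (\partial_x\otimes 1)\circ \partial_y - (1\otimes \partial_y)\circ \partial_x$ (and similarly for the $r$-term), then observe that it suffices to evaluate on arrows $z\in Q_1$. On such a $z$, both $\partial_x(z)$ and $\partial_y(z)$ land in $S\otimes S$ by~\eqref{eq:dex}, so applying another double derivation to either tensor factor produces zero, since the derivations are $S$-linear. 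Hence $\ldb \partial_x,\partial_y\rdb_l^\sim = 0$ on generators, and by the derivation property everywhere; likewise for the $r$-term. This is the only computational step with any subtlety — the key insight being $S$-linearity combined with the fact that $\partial_x$ on arrows takes values in $S\otimes S$.

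For the moment map statement, I would simply apply Lemma~\ref{lem:ncmom}, which says the noncommutative Hamiltonian action on a cotangent bundle sends $t_i\mapsto \delta_i = -\Delta_i = -e_i\Delta e_i$, and then substitute the explicit expression $\Delta = \sum_{x\in Q_1}[\partial_x, x]$ from~\eqref{eq:momentelement}. Under the isomorphism $\partial_x \leftrightarrow x^\ast$ this becomes $\delta_i = -e_i\bigl(\sum_x [x^\ast, x]\bigr)e_i = e_i\bigl(\sum_x [x, x^\ast]\bigr)e_i$, as claimed. There is essentially no obstacle beyond the sign bookkeeping: the identity~\eqref{eq:momentelement} is already verified, and the minus sign from the $0$-shifted convention in Proposition~\ref{prop:0shift} converts the commutator $[\partial_x, x]$ to $[x, x^\ast]$ as written.
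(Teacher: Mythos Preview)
Your proposal is correct and follows essentially the same approach as the paper. The paper's proof is more terse---it treats the brackets $\ldb y,z\rdb$ and $\ldb x,x^\ast\rdb$ as immediate from the definitions and only spells out the nontrivial vanishing $\ldb \partial_x,\partial_y\rdb=0$, using exactly your argument (the triple derivation $\ldb \partial_x,\partial_y\rdb_l^\sim$ vanishes on arrows because $\partial_x$ sends arrows into $S\otimes S$, and then the derivation property propagates this everywhere); the moment map claim is stated without further comment, so your invocation of Lemma~\ref{lem:ncmom} together with~\eqref{eq:momentelement} is precisely what is implicit there.
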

\begin{proof}
Given $x,y\in Q_1$ we have to show that the double bracket $\ldb \partial_x,\partial_y\rdb $ given in~\eqref{eq:dPcot} is zero. This follows from the fact that any elementary double derivation $\partial_x$ sends any arrow $z\in Q_1$ into $S\otimes S$, therefore the triple derivation
\[
\ldb \partial_x,\partial_y \rdb_l^\sim = (\partial_x \otimes 1)\circ \partial_y - (1\otimes \partial_y)\circ \partial_x
\] 
vanishes on the arrows, and so it vanishes everywhere.
\end{proof}

\begin{Remark} The double Poisson on $k\overline{Q}$ induces a $\HH_0$-Poisson structure on $(k\overline{Q})_\n$ (the space of "cyclic paths"), which is the well-known Necklace Lie algebra structure (\cite{BLB}).
\end{Remark}

The noncommutative zero locus is:
\begin{equation}
\label{eq:redpath}
T^\ast (kQ) / \langle \delta \rangle \cong k\overline{Q} / \bigg\langle \sum_{x \in Q_1} [x,x^\ast] \bigg\rangle =:\Pi(Q)\, ,
\end{equation}
the so called \textit{preprojective algebra} of the quiver $Q$. The Shafarevich complex is 
\begin{equation}
\label{eq:shaqui}
\Sh(kQ) \cong k\overline{Q} \amalg_S T_S(L[1]) \cong k{Q^\vartheta}\,,
\end{equation}
where we denote by $Q^\vartheta$ the quiver obtained from the doubled quiver $\overline{Q}$ by adding a new loop ($\vartheta_i$) in homological degree $1$ on each vertex, and $kQ^\vartheta$ is its (graded) path algebra. The BRST complex becomes
\begin{equation}
\BRST( kQ) \cong k\overline{Q} \ast_S T_S(L[1]\oplus L^\ast[-1]) \cong k\widehat{Q}\,,
\end{equation}
the path algebra of $\widehat{Q}$, which is the quiver obtained from the doubled quiver $\overline{Q}$ adding two new loops on each vertex, one in homological degree $1$ ($\vartheta_i$) and one in degree $-1$ ($\eta_i$). The BRST differential is, on the generators:
\begin{equation}
\label{eq:BRSTpath}
\begin{cases}
d x =- \left[\eta,x\right]\,,\qquad (\eta=\sum_i\eta_i)\\
d x^\ast =- \left[ \eta, x^\ast \right]\,, \\
d \vartheta_i = \delta_i - \left[\eta_i ,\vartheta_i \right]\,,\\
d \eta_i= -\eta_i^2=-\frac12[\eta_i,\eta_i]\,.
\end{cases}
\end{equation}


\subsection{The scheme of commuting matrices and similar}
\label{5.3}

Let us consider one particular class of examples of path algebras where the quiver has only one vertex. In this case we can only have arrows that are loops on this vertex, so the quiver is uniquely determined by the number of loops, and we denote by $Q_g$ the quiver with $g$ loops. We denote by $x_1,\dots x_g$ its loops and by $y_1, \dots ,y_g$ their dual loops, so that its cotangent bundle is a free algebra on $2g$ generators.
\[
k\overline{Q_g} = k \langle x_1,y_1,\dots ,x_g,y_g\rangle \,.
\]
The image of $t$ under the moment map $k[t] \to k\overline{Q_g}$ is $\delta=\sum_a [x_a,y_a]$. Underived commutative Poisson reduction gives the affine $GL_n$-quotient of the zero locus of the moment map $\mu_n : (\gl_n)^{\times 2g} \to \gl_n^{(\ast)}$: 
\begin{equation}
\label{eq:riem}
\mu_n^{-1}(0)\sslash \GL_n= \Rep_n\left( k\overline{Q_g}/\mathcal{I} \right)\sslash \GL_n = \left\{ (X_a,Y_a) \in (\gl_n)^{\times 2g} \, | \, \sum_a [X_a,Y_a] =0 \right\}  \sslash \GL_n \, ,
\end{equation}
a linearised (Lie algebra) version of the $\GL_n$-character variety of a Riemann surface of genus $g$: $\Hom_\Grp(\pi_1 (\Sigma_g ),\GL_n)\sslash{\GL_n}$. The noncommutative Shafarevich complex and the BRST complex are, respectively:
\[
\begin{aligned}
&\Sh( k\overline{Q_g}) = k \langle x_1,y_1,\dots ,x_g,y_g, \vartheta \rangle ,,\qquad d\vartheta = \sum_a [x_a,y_a]\,,\\
& \BRST(k\overline{Q_g}) = k\langle x_1,  y_1,\dots ,x_g,y_g, \vartheta,\eta\rangle\,,\qquad d = d_{\mathrm{Sh}} + d_\CE \,.
\end{aligned}
\]
For $g\geq 2$ the scheme~\eqref{eq:riem} is a complete intersection in $(\gl_n)^{\times 2g}$, so that the homology of the Koszul complex is only in degree zero, and the homology of the commutative BRST complex is essentially just the ring of functions on~\eqref{eq:riem}:
\[
\HH_\bullet(\B_n( k \overline{Q_g} ) ) \cong \HH_\bullet(\C_n(k \overline{Q_g} ) )^{\GL_n}\otimes \HH^{-\bullet } (\gl_n,k) = \O(\mu^{-1}(0))^{\GL_n} \otimes \HH^{-\bullet } (\gl_n,k)\,,
\]
(considering that $\HH^{-\bullet}(\gl_n,k) = k [\tr(\eta),\tr(\eta^3),\dots, \tr(\eta^{2n-1})]$ is an exterior algebra, so essentially `invertible'). 

The case $g=1$ instead corresponds to the scheme of commuting matrices, for which the homology of the Koszul complex in concentrated in degrees $0,1,\dots, n$, and it is not known. There is a conjecture (\cite[Conjecture 1]{BFPRW}), that the diagonal restriction to multisymmetric polynomials is a quasi-isomorphism:
\begin{equation}
\label{eq:conjecture1}
(\C_n(kQ_1) )^{\GL_n} =k[x_{ij},y_{ij},\vartheta_{ij}]^{\GL_n} \to k[x_i,y_i,\vartheta_i]^{S_n}\, .
\end{equation}
Using~\eqref{eq:BRST4} we can rewrite this conjecture by saying that the following map is an isomorphism
\begin{equation}
\HH_\bullet(\B_n(kQ_1)) = \HH_\bullet (k [x_{ij},y_{ij},\vartheta_{ij},\eta_{ij}]) \to  k[x_i,y_i,\vartheta_i]^{S_n}[\tr(\eta),\dots ,\tr(\eta^{2n-1})]\,.
\end{equation}

We conclude this Section with two more examples similar to the commuting scheme: 
\begin{enumerate}
\item
First we consider the cotangent bundle of the ring of Laurent polynomials: $A= T^\ast B=T^\ast(k[x^{\pm1}])$. Any double derivation $B \to B\otimes B$ is uniquely determined by its value on $x$, and by using the $B$-bimodule structure we can obtain any value starting from the following derivation $\partial_x(x) = 1\otimes 1$. $\DDer(B)$ is a free $B$-bimodule generated by $\partial_x$, which we call $y$, and we obtain:
\[
A= T^\ast B \cong k\langle x^{\pm 1},y \rangle  \,, \quad \ldb x,y\rdb = 1\otimes 1\,,
\]
It is easy to verify on the generators that the gauge element in $T^\ast B$ is still $\delta = xy-yx= [x,y]$, so that the zero locus is $A/\langle \delta \rangle = k[x^{\pm 1},y]$, and the corresponding commutative zero locus is a group-Lie algebra version of the commuting scheme:
\[
\Rep_n(A /\langle \delta \rangle) = \left\{ (X,Y) \in \GL_n \times \gl_n \, | \, [X,Y]=0 \right\} \,.
\]
The Shafarevich complex is obtained by adding one more variable $\vartheta$ whose differential is $[x,y]$. However we can observe that in this case, because $x$ is invertible, we can rewrite $[x,y] = (xyx^{-1} - y) x $ and at the level of matrices $XYX^{-1} = \mathrm{Ad}_X(Y)$ is the adjoint action of $\GL_n$ on $\gl_n$. Thus the Koszul complex, which a priori is the following homotopy pull-back diagram:
\[
\begin{tikzcd}
 \Spec(\C_n(A)) \arrow{r}{} \arrow{d}{}
    & \GL_n \times \gl_n \arrow{d}{}  \\
\pt  \arrow{r}{0} 
& \gl_n
\end{tikzcd}
\qquad (d \Theta = [X,Y] )
\] 
can be rewritten in a more intrinsic way as the following homotopy pull-back diagram, generalisable to other Lie algebras:
\[
\begin{tikzcd}
\gl_n \times^h_{(\gl_n\times \gl_n)} (\GL_n \times \gl_n) \arrow{r}{} \arrow{d}{}
    & \GL_n \times \gl_n \arrow{d}{{(\mathrm{Ad}}, 1)}  \\
\gl_n  \arrow{r}{\mathrm{diag}} 
& \gl_n\times \gl_n
\end{tikzcd}
\quad (d \Theta = \Ad_X(Y)- Y)
\]
We could write an analogous conjecture to the Lie algebra-Lie algebra case by saying that:
\begin{Conj} The following diagonal restriction map is a quasi-isomorphism:
\begin{equation}
\label{eq:conj1}
k [x_{ij}, y_{ij}, \vartheta_{ij} , \det(X)^{-1} ]^{\GL_n}  \we k[x_i^{\pm}, y_i,\vartheta_i]^{S_n} \,.
\end{equation}
\end{Conj}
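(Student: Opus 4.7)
The plan is to adapt the strategy for the Lie algebra-Lie algebra commuting scheme conjecture of \cite{BFPRW}, exploiting the extra flexibility that comes from inverting $\det(X)$. First I would verify the map is well-defined as a chain map. Setting the off-diagonal entries of $X, Y, \Theta$ to zero yields commuting diagonal matrices, so $d\vartheta_{ij}$ restricts to $[X,Y]_{ij}|_{\mathrm{diag}} = 0$; consequently the differential vanishes on the image, matching the trivial differential on the right-hand side. Surjectivity on cohomology would follow from the classical invariant theory for $\GL_n$ acting by conjugation on $\GL_n \times \gl_n$ plus the odd $\Theta$-direction: every multisymmetric Laurent polynomial in $(x_i, y_i, \vartheta_i)$ should lift to a $\GL_n$-invariant trace of a Laurent word $\tr(X^{a_1} Y^{b_1} \Theta^{c_1} \cdots)$, where negative powers of $X$ are now available thanks to the localisation at $\det(X)$.

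Second, I would reduce injectivity to the regular semisimple locus. Because $\det(X)$ is inverted, the source complex is supported over $\GL_n$, and on the open subscheme $\GL_n^{rs}$ of regular semisimple elements $X$ is diagonalisable by a $\GL_n$-conjugation. A Luna-slice / Weyl-integration argument then identifies
\[
\mathcal{O}(\GL_n^{rs} \times \gl_n)^{\GL_n} \cong \mathcal{O}(T^{rs} \times \gl_n)^{W}\, ,
\]
where $T \subset \GL_n$ is the diagonal torus and $W = S_n$. The constraint $[X,Y] = 0$ on the regular semisimple locus forces $Y$ into the centraliser of $X$, which is exactly the Cartan; in degree zero in $\vartheta$ this already gives injectivity of the diagonal restriction on the commuting open subscheme. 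This is the ``easy'' half of the statement: everything classical about Chevalley restriction carries over nicely to the group case.

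The delicate step is upgrading this to a quasi-isomorphism in all $\vartheta$-degrees. The natural attempt is to construct a $\GL_n$-equivariant deformation retract of the source Koszul complex onto its restriction to the (derived) commuting subscheme over $\GL_n^{rs}$, on which Chevalley restriction can then be applied $\vartheta$-degree by $\vartheta$-degree. The main obstacle is precisely the obstacle in the open Lie algebra commuting scheme conjecture: controlling the higher Koszul homology away from the regular semisimple locus, where the source has no reason to be a complete intersection. Passing to the group may genuinely help because the unipotent variety of $\GL_n$ is in several respects better behaved than the nilpotent cone of $\gl_n$, but it is not clear a priori whether this truly simplifies the problem or merely reshuffles the difficulty. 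A realistic programme would therefore start by verifying \eqref{eq:conj1} for small $n$ via explicit Gröbner and representation-theoretic calculation (in parallel with the analogous computation in the Lie algebra case), while simultaneously pursuing the semisimple-locus deformation-retract strategy in arbitrary rank.
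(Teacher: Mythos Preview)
The statement you are addressing is labelled \textbf{Conjecture} in the paper, and the paper offers no proof of it whatsoever: it is presented as an analogue of the open Lie algebra--Lie algebra commuting scheme conjecture of \cite{BFPRW}, and is left open. There is therefore no ``paper's own proof'' against which to compare your proposal.

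Your write-up is honest about this: you correctly identify that the well-definedness of the restriction map and the degree-zero surjectivity/injectivity on the regular semisimple locus are the accessible parts, and that the genuine obstruction is controlling the higher Koszul homology off the regular semisimple locus --- exactly the unresolved difficulty in the $\gl_n$--$\gl_n$ case. What you have written is a reasonable research outline, not a proof, and you should not present it as one. In particular, the sentence ``Passing to the group may genuinely help \dots\ but it is not clear a priori whether this truly simplifies the problem'' is an accurate assessment: nothing in your argument actually closes the gap, and the final paragraph explicitly falls back to small-$n$ verification plus a hoped-for deformation retract. That is a programme, not a demonstration. If the assignment was to prove the statement, the answer is that it is (as far as the paper is concerned) open; if the assignment was to sketch an approach, your sketch is sensible but should be clearly flagged as conjectural at the crucial step.
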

 
\item Next we consider the following example $A=k\langle x^{\pm1},y^{\pm1}\rangle$, which is not a cotangent bundle. However we can still define a double Poisson structure by setting $ \ldb x,y\rdb = 1\otimes 1$. We can easily verify on the generators that the element $\delta = [x,y]$ defines a Hamiltonian action, that is:
\[
\ldb \delta , a \rdb= a \otimes 1 - 1 \otimes a, \quad \forall a \in A\,.
\]
The zero locus is $A/\langle \delta \rangle= k[x^{\pm 1},y^{\pm1}]$, and the corresponding commutative zero locus is a group-group version of the commuting scheme:
\[
\Rep_n(A /\langle \delta \rangle) = \left\{ (X,Y) \in \GL_n^{\times 2} \, | \, [X,Y]=0 \right\} \,.
\]
As in the previous example, using that now both matrices are invertible, we can rewrite the relation as $xyx^{-1}y^{-1} =1$, so that we can identify the Hamiltonian reduction with the $\GL_n$-character variety of the Riemann surface with genus $g=1$:
\begin{equation}
\label{eq:charRie}
\Rep_n(A/\langle \delta\rangle)\sslash \GL_n \cong \Hom_{\Grp}(\pi_1( \Sigma_1) , \GL_n)\sslash \GL_n\,.
\end{equation}
\begin{Remark}[The Poisson structure on the character variety is \emph{not} the standard one]
\label{eq:charRie}
We need to remark that the above identification~\eqref{eq:charRie} holds only at the level of affine schemes while the induced Poisson structure that we have on $\Rep_n(A\slash\langle \delta \rangle)\sslash \GL_n$ does not coincide with the one on the character variety of $\Sigma_1$. In fact, the latter is obtained by quasi-Hamiltonian reduction of the quasi-Hamiltonian space $\GL_n \acts\GL_n^{\times 2}$, equipped with the canonical $2$-form $\omega$ on the double of a Lie group (\cite{AMM}), which is different from the one induced by $\GL_n^{\times 2} \subset \gl_n^{\times 2} \cong \gl_n \times \gl_n^\ast \cong T^\ast \gl_n$. If we would like to obtain the standard Poisson structure on the character variety we would need a noncommutative analogue of the quasi-Hamiltonian formalism and Lie-group valued moment maps of \cite{AMM}, which at least in the case of the standard action of $\GL_n$ by conjugation was developed in \cite{VdB1}.
\end{Remark}
The corresponding conjecture relating the corresponding Koszul complex with its diagonal part would be (and it already appears in \cite[Conjecture 1]{BRY}):
\begin{Conj} The following diagonal restriction map is a quasi-isomorphism:
\begin{equation}
\label{eq:conj1}
k [x_{ij}, y_{ij}, \vartheta_{ij} , \det(X)^{-1},\det(Y)^{-1} ]^{\GL_n}  \we k[x_i^{\pm}, y_i^{\pm 1},\vartheta_i]^{S_n} \,.
\end{equation}
\end{Conj}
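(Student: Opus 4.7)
The plan is to deduce this conjecture from the (still open) Lie algebra--Lie algebra conjecture \cite[Conjecture 1]{BFPRW} via a flat localisation argument, using crucially that $\det(X)$ and $\det(Y)$ are $\GL_n$-invariant elements which map to the $S_n$-invariant elements $x_1\cdots x_n$ and $y_1\cdots y_n$ under the diagonal restriction. First I would check that the restriction map $\rho$ is well-defined as a map of dg algebras: it sends $x_{ij}, y_{ij}, \vartheta_{ij} \mapsto \delta_{ij} x_i, \delta_{ij} y_i, \delta_{ij}\vartheta_i$, so $\det(X) \mapsto \prod_i x_i$ and $\det(Y) \mapsto \prod_i y_i$ (both units on the right hand side); and the Koszul differential $d\vartheta_{ij} = [X,Y]_{ij}$ is sent to $[\mathrm{diag}(x_i), \mathrm{diag}(y_i)]_{ij}=0$, so the target carries the zero differential and $\rho$ is automatically a chain map.

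The key step is to identify the localised invariants on both sides with localisations of the corresponding unlocalised invariants. Because $\det(X), \det(Y) \in k[x_{ij},y_{ij},\vartheta_{ij}]^{\GL_n}$ form a regular multiplicative system of invariant elements, and localisation at invariants commutes with taking invariants of a reductive group, there is a natural isomorphism
\[
k[x_{ij}, y_{ij}, \vartheta_{ij}, \det(X)^{-1}, \det(Y)^{-1}]^{\GL_n} \;\cong\; \bigl(k[x_{ij},y_{ij},\vartheta_{ij}]^{\GL_n}\bigr)\bigl[\det(X)^{-1}, \det(Y)^{-1}\bigr],
\]
and analogously for the $S_n$-invariant side with $\prod x_i$ and $\prod y_i$. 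Since $\rho$ maps these multiplicative systems to each other, the localised restriction is the localisation of the unlocalised one. Localisation at an element is an exact functor (tensor with a flat module), so it preserves quasi-isomorphisms; therefore the conjecture follows from the quasi-isomorphism in the unlocalised Lie algebra case.

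The hard part will be that the unlocalised case, \cite[Conjecture 1]{BFPRW}, is itself an open conjecture. An unconditional, direct attack would need to show that, although by Procesi's first fundamental theorem the $\GL_n$-invariant polynomials in $x_{ij}, y_{ij}$ are generated by traces of monomials in $X, Y$, all those traces which are not already power sums $\sum_i w(x_i, y_i)$ on the diagonal become boundaries in the Koszul complex after introducing $\Theta$, i.e.\ that the positive-degree cohomology of the $\GL_n$-invariant Koszul complex vanishes. This amounts to a delicate multiplicity computation for $\GL_n$-representations in $\Sym^\bullet(\gl_n \oplus \gl_n) \otimes \Lambda^\bullet(\gl_n)$ relative to the moment map ideal, for which no uniform approach is currently known, and which is the genuine obstruction in both the commuting scheme and its group/group analogue treated here.
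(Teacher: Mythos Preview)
The statement is labelled a \emph{Conjecture} in the paper and is not proved there; the paper only records that it already appears as \cite[Conjecture~1]{BRY}. There is no proof in the paper to compare against.

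Your reduction argument is correct as far as it goes. The group--group Koszul complex $k[x_{ij},y_{ij},\vartheta_{ij},\det(X)^{-1},\det(Y)^{-1}]$ with $d\Theta=[X,Y]$ is the localisation of the Lie--Lie Koszul complex at the $\GL_n$-invariant elements $\det(X),\det(Y)$; the diagonal restriction carries these to the $S_n$-invariant units $\prod_i x_i,\prod_i y_i$; taking invariants of a reductive (or finite) group in characteristic zero commutes with localisation at invariant elements; and localisation is exact, hence preserves quasi-isomorphisms. So the present conjecture does follow from \cite[Conjecture~1]{BFPRW}.

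But this is only a conditional implication, as you yourself acknowledge: the Lie--Lie conjecture is open, so your argument does not prove the statement. What you have produced is the (correct and not entirely obvious) observation that the group--group conjecture is no harder than the Lie--Lie one, which the paper does not make explicit; it is a useful remark, but not a proof, and the paper does not claim one either.
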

\end{enumerate}

\subsection{Decomposition of the homology of the commuting scheme}
\label{5.4}

Let $A=k\langle x,y \rangle $. Consider the following maps on the BRST complex $\B_n(A)=k[x_{ij},y_{ij},\vartheta_{ij},\eta_{ij}]$:
\begin{equation}
\label{eq:fipsi}
\begin{aligned}
&\varphi_\bullet =\{ \tr(\eta), - \}: \B_n(A)_\bullet \to \B_n(A)_{\bullet -1}\,,\\
&\psi_\bullet = \tr(\vartheta) \cdot (-) : \B_n(A)_\bullet \to \B_n(A)_{\bullet +1}\,.
\end{aligned}
\end{equation}

\begin{Prop}
The following relations are satisfied:
\begin{equation}
\label{eq:relations}
 \varphi_{\bullet+1} \psi_\bullet +\psi_{\bullet-1}\varphi_\bullet = n 1\,, \qquad \varphi_{\bullet-1}\varphi_\bullet = 0\,,\qquad \psi_{\bullet+1} \psi_\bullet= 0\, .
\end{equation}
Moreover the maps $\varphi_\bullet, \psi_\bullet$ preserve boundaries and cycles of the BRST differential, so they induce maps on the homology $\HH_\bullet(\B_n(A))$, satisfying the same relations.
\end{Prop}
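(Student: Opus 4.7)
The plan is to verify the three identities at the chain level and then show that $\varphi_\bullet$ and $\psi_\bullet$ are chain maps, so that both the maps and the relations descend to $\HH_\bullet(\B_n(A))$.

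The central input is the single Poisson bracket $\{\tr(\eta), \tr(\vartheta)\} = n$. Since on $\BRST(A)$ the only non-trivial double bracket between the new generators $\vartheta$ and $\eta$ is $\ldb \vartheta, \eta \rdb = 1 \otimes 1$, Theorem~\ref{thm:dpobjects} gives
\[
\{\vartheta_{rs}, \eta_{uv}\} = \delta_{us}\delta_{rv}, \qquad \{\eta_{rs}, \eta_{uv}\} = 0.
\]
Summing over diagonal entries yields $\{\tr(\vartheta), \tr(\eta)\} = n$ and $\{\tr(\eta), \tr(\eta)\} = 0$, and graded antisymmetry gives $\{\tr(\eta), \tr(\vartheta)\} = n$ as well.

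From this the three chain-level identities are immediate. The relation $\varphi_{\bullet+1}\psi_\bullet + \psi_{\bullet-1}\varphi_\bullet = n \cdot 1$ is the graded Leibniz rule for the odd derivation $\varphi$ applied to the product $\tr(\vartheta) \cdot f$, where the crossed term acquires the sign $(-1)^{|\tr(\eta)||\tr(\vartheta)|} = -1$. The relation $\varphi_{\bullet-1}\varphi_\bullet = 0$ follows from the Leibniz--Jacobi identity of Lemma~\ref{lem:singlebracket}(5), which gives $\varphi^2 = \tfrac12\{\{\tr(\eta), \tr(\eta)\}, -\} = 0$. The relation $\psi_{\bullet+1}\psi_\bullet = 0$ holds because the odd element $\tr(\vartheta) \in \B_n(A)_1$ squares to zero in a graded-commutative algebra.

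For the passage to homology, the central observation is that both $\tr(\eta)$ and $\tr(\vartheta)$ are BRST cocycles. Applying the representation functor to the noncommutative differential~\eqref{eq:BRSTpath} yields, on matrix-entry generators, $d\tr(\vartheta) = \tr[X,Y] - \tr[\eta, \vartheta]$ and $d\tr(\eta) = -\tr(\eta^2)$, all of which vanish by graded cyclicity of the trace (for the last two, using that $\eta$ and $\vartheta$ are odd). One application of graded Jacobi then gives $\varphi d - d\varphi = \{\{\tr(\eta), \tr(\gamma)\}, -\} = -\{d\tr(\eta), -\} = 0$ (with no sign, since $|\tr(\eta)| = |\tr(\gamma)| = -1$), and the Leibniz rule for $d$ gives directly $d\psi + \psi d = d(\tr(\vartheta)) \cdot (-) = 0$. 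Hence $\varphi$ and $\psi$ are chain maps of degrees $-1$ and $+1$, they induce well-defined maps on $\HH_\bullet(\B_n(A))$, and the three chain-level identities descend. The only real subtlety throughout is the bookkeeping of signs from the odd degrees of $\tr(\eta)$, $\tr(\vartheta)$ and $\tr(\gamma)$.
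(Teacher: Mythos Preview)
Your argument is correct and follows the same route as the paper: compute $\{\tr(\eta),\tr(\vartheta)\}=n$, use Leibniz and Jacobi for the three identities, and observe that $d\tr(\eta)=d\tr(\vartheta)=0$ because the noncommutative differentials are commutators. One small slip in your sign bookkeeping: with $|\tr(\eta)|=|\tr(\gamma)|=-1$ the Jacobi identity gives $\varphi d + d\varphi = \{\{\tr(\eta),\tr(\gamma)\},-\}$ (not $\varphi d - d\varphi$, and the sign $(-1)^{(-1)(-1)}=-1$ is present), but since $\{\tr(\eta),\tr(\gamma)\}=d\tr(\eta)=0$ the conclusion is unaffected.
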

\begin{proof}
For any element $\alpha \in \B_n(A)$ we have:
\[
\begin{aligned}
&\{\tr(\eta), \tr(\vartheta) \alpha \} = \{\tr (\eta) , \tr(\vartheta)\} \alpha -\tr(\vartheta) \{ \tr(\eta),\alpha\} = \tr\{\eta,\vartheta\} \alpha - \tr(\vartheta)\{ \tr(\eta), \alpha\} =\\
& \tr 1 \alpha - \tr(\vartheta) \{ \tr(\eta), \alpha\} = n \alpha - \tr(\vartheta) \{\tr(\eta), \alpha\}  \,,
\end{aligned}
\]
which proves the first relation. The second follows from the Jacobi identity and the fact that $\{ \tr(\eta),\tr(\eta)\} = \tr ( \{\eta,\eta\}) = \tr(0) = 0$. The third one is because $\tr(\vartheta)$ is an odd element, therefore $\tr(\vartheta)^2=0$. They preserve boundaries and cycles essentially because the differentials $d\vartheta= [x,y] - [\eta,\vartheta]$ and $d\eta=-\frac12 [\eta,\eta]$ are commutators, therefore $d\tr(\vartheta) = d \tr(\eta) = 0$.
\end{proof}

Moreover in the decomposition $\HH_\bullet(\B_n(A)) \cong \HH_\bullet(\C_n(A))^{\GL_n} \otimes \HH^{-\bullet} (\gl_n,k)$ they preserve the submodule $\HH_\bullet{(\C_n(A))}^{\GL_n} \subset \HH_\bullet(\B_n(A))$, and we denote by the same symbol the induced maps on the Koszul homology:
\begin{equation}
\label{eq:fipsi}
\begin{aligned}
&\varphi_\bullet : \HH_\bullet (\C_n(A))^{\GL_n}  \to \HH_{\bullet-1} (\C_n(A))^{\GL_n}\,,\\
&\psi_\bullet :  \HH_\bullet (\C_n(A))^{\GL_n}  \to \HH_{\bullet+1} (\C_n(A))^{\GL_n}\,,
\end{aligned}
\end{equation}
which satisfy the same relations~\eqref{eq:relations}. From which it follows that:
\begin{Cor}
The Koszul homology decomposes as two copies of the same graded module which are obtained one from the other by a degree $1$ shift, as follows:
\begin{equation}
\label{eq:decomp}
\HH_\bullet( \C_n(A))^{\GL_n} = \underbrace{\ker(\varphi_\bullet)}_{\bullet = 0,1,\dots ,n-1} \oplus \underbrace{ \im(\psi_{\bullet-1})}_{\bullet = 1,2,\dots, n}\, ,
\end{equation}
and the isomorphism is provided by ${\psi_\bullet}_{|_{\ker(\varphi_\bullet)}}: \ker(\varphi_\bullet )\overset{\simeq}{ \to}\im(\psi_\bullet)$.
\end{Cor}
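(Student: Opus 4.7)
The plan is to exploit the three identities in~\eqref{eq:relations} by constructing explicit complementary idempotents on $\HH_\bullet(\C_n(A))^{\GL_n}$. Since $n$ is invertible in $k$, I would set
\begin{equation*}
e_\bullet := \tfrac{1}{n}\,\psi_{\bullet-1}\varphi_\bullet,\qquad f_\bullet := \tfrac{1}{n}\,\varphi_{\bullet+1}\psi_\bullet,
\end{equation*}
both viewed as endomorphisms of $\HH_\bullet(\C_n(A))^{\GL_n}$. A quick check using only $\varphi^2 = \psi^2 = 0$ and $\varphi\psi + \psi\varphi = n\cdot 1$ shows that $e_\bullet + f_\bullet = 1$, that $e_\bullet$ and $f_\bullet$ are each idempotent, and that $e_\bullet f_\bullet = f_\bullet e_\bullet = 0$; each of these is a one-line manipulation. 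In other words, viewing $\varphi$ as a differential on the graded module $\HH_\bullet(\C_n(A))^{\GL_n}$, the map $\tfrac{1}{n}\psi$ is a contracting homotopy, and $(e_\bullet,f_\bullet)$ is the associated Hodge-type decomposition.

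Next I would identify the two summands. The inclusions $\im(f_\bullet) \subseteq \ker(\varphi_\bullet)$ (from $\varphi^2 = 0$) and $\im(e_\bullet) \subseteq \im(\psi_{\bullet-1})$ are immediate; the reverse inclusions use the defining relation as a contraction. For $x \in \ker(\varphi_\bullet)$ one has $x = \tfrac{1}{n}(\varphi\psi + \psi\varphi)(x) = f_\bullet(x)$, so $x \in \im(f_\bullet)$. For $\psi(y) \in \im(\psi_{\bullet-1})$ one computes $e_\bullet(\psi(y)) = \tfrac{1}{n}\psi\varphi\psi(y) = \psi(y) - \tfrac{1}{n}\psi^2\varphi(y) = \psi(y)$, so $\psi(y) \in \im(e_\bullet)$. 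Hence $\ker(\varphi_\bullet) = \im(f_\bullet)$ and $\im(\psi_{\bullet-1}) = \im(e_\bullet)$, and the idempotent decomposition $1 = e_\bullet + f_\bullet$ with $e_\bullet f_\bullet = 0$ yields the claimed direct sum.

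For the final claim that $\psi_\bullet|_{\ker(\varphi_\bullet)}\colon \ker(\varphi_\bullet) \xrightarrow{\sim} \im(\psi_\bullet)$ is an isomorphism, injectivity follows from the same contraction: $x \in \ker(\varphi_\bullet)$ with $\psi(x) = 0$ forces $x = f_\bullet(x) = \tfrac{1}{n}\varphi\psi(x) = 0$. For surjectivity, given $\psi(y) \in \im(\psi_\bullet)$, the element $f_\bullet(y)\in \HH_\bullet$ lies in $\ker(\varphi_\bullet)$ (again by $\varphi^2 = 0$) and satisfies $\psi(f_\bullet(y)) = \tfrac{1}{n}\psi\varphi\psi(y) = \psi(y)$ by the same $\psi^2 = 0$ cancellation as above. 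I do not foresee any substantive obstacle here: the whole argument reduces to careful bookkeeping of the three relations, and the real content of the decomposition was the existence of these relations on homology, which was the subject of the preceding proposition.
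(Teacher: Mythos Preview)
Your proof is correct and follows essentially the same approach as the paper: both arguments use the relations~\eqref{eq:relations} to produce the complementary projections $\tfrac{1}{n}\varphi\psi$ and $\tfrac{1}{n}\psi\varphi$, identify their images as $\ker(\varphi_\bullet)$ and $\im(\psi_{\bullet-1})$, and then verify the isomorphism $\psi_\bullet|_{\ker(\varphi_\bullet)}$ by the same contraction trick. Your packaging in terms of explicit idempotents $e_\bullet,f_\bullet$ is slightly more systematic, but the content is identical.
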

\begin{proof} This follows precisely from the relations that the induced maps~\eqref{eq:fipsi} satisfy (which are the same as~\eqref{eq:relations}). In fact from the first and the second equation we have that any homology class $\alpha \in \HH_\bullet(\C_n(A))^{\GL_n}$ can be written as a sum of two elements:
\[
\alpha = \frac{1}{n}\underbrace{ ( \varphi_{\bullet+1} \psi_\bullet)}_{\in \ker(\varphi_\bullet)} + \frac{1}{n} \underbrace{( \psi_{\bullet-1} \varphi_\bullet) }_{\in \im(\psi_{\bullet-1})}\, .
\]
The intersection of the two submodules is zero because if $\alpha =\psi_{\bullet-1} \beta$ and $\varphi_\bullet \alpha =0$, by the third property:
\[
n \alpha =  \varphi_{\bullet+1} \psi_\bullet \underbrace{\alpha }_{\psi_{\bullet-1} \beta} +\psi_{\bullet-1}\varphi_\bullet \alpha = 0\, .
\]
Finally ${\psi_\bullet}_{|_{\ker(\varphi_\bullet)}}$ is injective because it is the left inverse of ($n$ times) the identity: $\varphi_{\bullet+1} {\psi_\bullet}_{|_{\ker(\varphi_\bullet)}} = n1$, and it is surjective on $\im(\psi_\bullet)$ because $\psi_\bullet$ is zero on the complement of $\ker(\varphi_\bullet)$.
\end{proof}

\begin{Remark}
The decomposition~\eqref{eq:decomp} can be explained also without using the trace maps in the following way. The homology of the Koszul complex $\C_n(A)$ is essentially only due to the diagonal elements ($[x,y]_{ii}$), because the other ones form a regular sequence (\cite{Kn}). Moreover of these diagonal elements one is superfluous, because their sum is zero:
\[
\tr( [x,y] ) = \sum_i [x,y]_{ii} = 0\,.
\]
It follows that the homology decomposes as the tensor product of the reduced homology (the one obtained by removing the superfluous element $\tr(\vartheta)$) and the antisymmetric algebra on the $1$-dimensional vector space generated by $\tr(\vartheta)$:
\[
\HH_\bullet(\C_n(A)) = \widetilde{\HH}_\bullet \oplus \widetilde{\HH}_{\bullet-1}\cdot \tr(\vartheta)\, .
\]
This decomposition is the same as~\eqref{eq:decomp} so, incidentally, we find another interpretation of the reduced Koszul homology of the commuting scheme as $\widetilde{\HH}_\bullet \cong \ker( \varphi_\bullet)$: the classes whose Poisson bracket with $\tr(\eta)$ vanishes.
\end{Remark}


\section{Noncommutative group actions and Poisson-group schemes}
\label{6}

In this final, short section we formalise the notions of noncommutative analogues of group schemes, group actions and Poisson-group schemes. The purpose is two-fold:
\begin{itemize}
\item On the one hand these seems to be rather natural definitions that, at least to our knowledge, did not appear yet in the literature and can help understand from a more intrinsic, coordinate-free way, many results in the paper.
\item On the other hand these notions could be used to define generalisations of the above story for noncommutative Hamiltonian actions different from the standard one (that induces the ordinary conjugation action of $\GL_n$).
\end{itemize}

\subsection{Noncommutative group schemes and actions} 
\label{6.1}

First we recall a couple of general notions about categorical groups (and cogroups), associated functors, and categorical group actions.

\textbf{Groups:} To a cartesian monoidal category $\CC$ (category with binary products and terminal object $1_\CC \in \CC$), we can associate the category $\Grp(\CC)$ of group objects in $\CC$: quadruples $(G,m,\iota,e)$ with $G\in \CC$ an object, a `multiplication morphism' $m: G\times G \to G$, an `inverse morphism'  $\iota: G \to G$, and a `unit morphism' $e: 1_\CC \to G$ satisfying the group axioms -- and morphisms the morphisms of underlying objects preserving multiplication. A (internal) group action $G\overset{\alpha}{\acts} X$ of a group $G\in \Grp(\CC)$ on $X \in \CC$ is a morphism $\alpha: G\times X \to X$ satisfying the usual two conditions of group actions. We can define a category of actions on $\CC$, which we denote by (a perhaps unconventional notation) $\Act(\CC)$: objects are group actions $G\acts X$, and morphisms from $G\overset{\alpha}{\acts} X$ to $H \overset{\beta}{\acts} Y$ are couples $(\varphi,f)$ consisting of a morphism of groups $\varphi: G \to H$ and a morphism $f:X \to Y$ such that $\beta \circ (\varphi \times f) = f \circ \alpha$. We can also fix the group $G$ (and $\varphi=\id_G$) and consider only the category of $G$-equivariant objects, which we denote by $\Act_G(\CC)$. To a cartesian functor $F: \CC \to \DD$ between cartesian monoidal categories we have induced functors between all the categories introduced above: 
\begin{enumerate}
\item $\Grp(F): \Grp(\CC) \to \Grp(\DD)$
\item $\Act(F): \Act(\CC) \to \Act(\DD)$
\item $\Act_G(F): \Act_G(\CC) \to \Act_{F(G)}(\DD)$
\end{enumerate}

\textbf{Cogroups:}  If $\AA$ has binary coproducts and initial object $\emptyset_\AA \in \AA$ (that is, $\AA^\op$ is cartesian monoidal), then we can define the category $\CoGrp(\AA)$ of cogroup objects in $\AA$: quadruples $(A,\Delta,S,\epsilon)$ with $A \in \AA$ an object, a `comultiplication' $\Delta: A \to A \coprod A$, a `coinverse' $S: A \to A$, and a `counit' $\epsilon: A \to \emptyset_\AA$ satisfying the cogroup axioms -- and morphisms the comultiplication preserving ones. Obviously $\CoGrp(\AA) \cong (\Grp(\AA^\op))^\op$. Dualising the previous constructions we can define the category of cogroup coactions $\CoAct(\AA) \left(\cong (\Act(\AA^\op))^\op \right)$ and for some fixed cogroup $A \in \CoGrp(\AA)$, the category of $A$-coequivariant objects $\CoAct_A(\AA) \left(\cong (\Act_{A^\op}(\AA^\op))^\op \right)$. To any cocartesian functor $F:\AA \to \BB$ we have induced three functors:
\begin{enumerate}
\item $\CoGrp(F)\left( \cong \Grp(F^\op)^\op\right): \CoGrp(\AA) \to \CoGrp(\BB)$
\item $\CoAct(F)\left( \cong \Act(F^\op)^\op\right): \CoAct(\AA) \to \Act(\BB)$
\item $\CoAct_A(F)\left( \cong \Act_{A^\op}(F^\op)^\op\right): \CoAct_A(\AA) \to \CoAct_{F(A)}(\BB)$
\end{enumerate}

\begin{Ex} 
\label{ex:affgroup}
Let $\CC=\Aff_k$ the category of affine $k$-schemes. The category $\Grp(\Aff_k)$ is the category of affine group schemes over $k$ (we denote it simply by $\Grp_k$), $\Act(\Aff_k)$ is the category of affine group scheme actions (we denote it by $\Act_k$), and $\Act_G(\Aff_k)$ the category of $G$-equivariant affine schemes over $k$ (we denote it by $G-\Aff_k$).
\end{Ex}

\begin{Ex}
\label{ex:op}
(= \textbf{Example}~\ref{ex:affgroup}\textsuperscript{op}). $\AA=\cAlg_k = \Aff_k^\op$. The category $\CoGrp(\cAlg_k)\cong \cHopf_k$ is the category of commutative Hopf algebras over $k$ (notice, in fact, that the antipode map of a Hopf algebra is in general an antihomomorphism of algebras, but when they are commutative, it is a homomorphism, and it corresponds to the coinverse of the cogroup structure). The category $\CoAct(\cAlg_k)$ is the category of coactions of commutative Hopf algebras on commutative algebras over $k$, and $\CoAct_A(\cAlg_k)$ the category of $A$-coequivariant commutative algebras over $k$.
\end{Ex}

\begin{Defn} [Noncommutative version of Example~\ref{ex:affgroup}] 
\label{defn:ngroup}
The category of \emph{noncommutative affine group schemes} is the category of group objects in the cartesian monoidal category of noncommutative affine schemes $\NAff_k:=Alg_k^\op$. We denote it by $\NGrp_k := \Grp(\NAff_k)$. The category of \emph{noncommutative affine group scheme actions} is the category of actions on noncommutative affines: $\NAct_k:= \Act(\NAff_k)$. For a fixed noncommutative affine group scheme $Q$ we denote the category of $Q$-equivariant objects by $Q-\NAff_k:= \Act_Q(\NAff_k)$.
\end{Defn}

\begin{Prop}
\label{prop:repgroup}
The (opposite) representation functor $\Rep_n= (-)_n^\op : \NAff_k \to \Aff_k$ is a cartesian functor, therefore it induces the following three functors, which by an abuse of notation we denote by the same symbol:
\begin{center}
\begin{enumerate}
\item $\Rep_n : \NGrp_k  \to \Grp_k $ , which enriches the usual one $\Rep_n: \NAff_k \to \Aff_k$ in the sense that there is a commutative diagram linking these two functors under the natural forgetful functors $\NGrp_k \to \NAff_k$, $\Grp_k \to \Aff_k$.
\item $\Rep_n : \NAct_k  \to \Act_k$ , 
\item $\Rep_n :  Q-\NAff_k \to \Rep_n(Q)-\Aff_k$ ,
\end{enumerate}
\end{center}
therefore justifying Definition~\ref{defn:ngroup}, according to the Kontsevich-Rosenberg principle.
\end{Prop}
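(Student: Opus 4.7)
The whole statement reduces to a single technical point: that the functor $(-)_n : \Alg_k \to \cAlg_k$ preserves binary coproducts and the initial object, equivalently, that $\Rep_n : \NAff_k \to \Aff_k$ is cartesian. Once this is established, parts (1)--(3) are purely formal consequences of the general principle recalled before the statement: any cartesian functor $F:\CC\to\DD$ between cartesian monoidal categories lifts canonically to a functor $\Grp(F):\Grp(\CC)\to\Grp(\DD)$, to $\Act(F):\Act(\CC)\to\Act(\DD)$, and, for any fixed $G\in\Grp(\CC)$, to $\Act_G(F):\Act_G(\CC)\to\Act_{F(G)}(\DD)$. The commutative diagram with the forgetful functor $\NGrp_k\to\NAff_k$ (resp.\ $\Grp_k\to\Aff_k$) is tautological from the construction of $\Grp(F)$, since on underlying objects $\Grp(F)$ is nothing but $F$.

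First I would check the terminal object. The terminal object of $\NAff_k$ is the noncommutative affine scheme corresponding to the initial object $k\in \Alg_k$, and the functor of points of $\Rep_n(k)$ is $R\mapsto \Hom_{\Alg_k}(k,M_n(R))$, which is a singleton; hence $\Rep_n(k)=\Spec(k)$ is terminal in $\Aff_k$. Next I would check preservation of binary products, which in $\NAff_k$ correspond to the free product $A\ast B$ in $\Alg_k$. By the universal property of the coproduct one has, functorially in a commutative test algebra $R$,
\begin{equation*}
\Hom_{\Alg_k}(A\ast B,M_n(R))\;\cong\;\Hom_{\Alg_k}(A,M_n(R))\,\times\,\Hom_{\Alg_k}(B,M_n(R)),
\end{equation*}
which on representing objects gives the canonical isomorphism of affine schemes $\Rep_n(A\ast B)\cong \Rep_n(A)\times\Rep_n(B)$, i.e.\ $(A\ast B)_n\cong A_n\otimes_k B_n$. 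Alternatively, this is a special case of the cocontinuity of $(-)_\nn$ on the under category $\DGA_{T_S(L)}$ proved in Theorem~\ref{thm:repham}, applied with $S=k$ and $L=0$, so no new work is needed.

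With $\Rep_n:\NAff_k\to\Aff_k$ cartesian, the three induced functors are obtained by applying it componentwise to the structure morphisms defining a group object, a group action, or an equivariant object, and using the canonical isomorphism $\Rep_n(X\times Y)\cong \Rep_n(X)\times\Rep_n(Y)$ to interpret the image diagrams inside $\Aff_k$. The group axioms, action axioms, and equivariance relations then transport along these isomorphisms because $F(f\times g)$ corresponds to $F(f)\times F(g)$ under the product-preservation iso, and similarly for the terminal-object morphism used to encode the unit. This yields (1)--(3) with the required compatibilities with the forgetful functors, completing the proof. I do not expect any serious obstacle: the only mildly delicate point is to make sure the product-preservation isomorphisms are natural enough to be applied simultaneously to $m$, $\iota$, $e$ (or $\alpha$), which is automatic since they come from the universal property, but it is the step that should be written out carefully.
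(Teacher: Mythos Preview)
Your proposal is correct. The paper does not give a proof of this proposition at all: it is stated and immediately followed by a Remark, the authors evidently regarding it as an immediate consequence of the preceding general discussion on groups, cogroups, and cartesian functors. Your argument is exactly the one that discussion is designed to make automatic --- verify that $\Rep_n$ preserves the terminal object and binary products (which you do cleanly via the functor of points, and also note follows from the cocontinuity in Theorem~\ref{thm:repham}), then invoke the formal machinery $\Grp(F)$, $\Act(F)$, $\Act_G(F)$ spelled out just before the proposition. There is nothing to compare: you have supplied the proof the paper omits, along the lines the paper's own setup dictates.
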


\begin{Remark}
For an algebra $A \in \Alg_k$ we denote by $\Sp(A) \in \NAff_k$ the corresponding object in the opposite category (and analogously for $A \in \CoGrp(\Alg_k)$ we denote by $\Sp(A) \in \NGrp_k$ the corresponding noncommutative affine group scheme). If we do so the notation we used for the functor $\Rep_n$ in the previous Proposition is in slight contradiction with the previous notation that we used in the introduction of the paper, in which we evaluated $\Rep_n(A)$ on algebras $A$. Now instead, to be precise, we should (and will) say $\Rep_n(\Sp(A))$. 
\end{Remark}

\begin{Ex} [Noncommutative additive group $\NGa$]
This is $\NGa:= \Sp(k[x])$, where the cogroup structure on $k[x]$ is:
\[
\begin{aligned}
\Delta: &k[x] \to k \langle x_1, x_2 \rangle\\
&x \longmapsto x_1 +x_2
\end{aligned}
\qquad
\begin{aligned}
S: &k[x] \to k[x]\\
&x\longmapsto -x
\end{aligned}
\qquad
\begin{aligned}
\epsilon: &k[x] \to k \\
&x \longmapsto 0
\end{aligned}
\]
The corresponding (abelian) affine group scheme is $\Rep_n(\NGa)\cong \gl_n(k)$ (with respect to the sum).
\end{Ex}

\begin{Ex} [Noncommutative multiplicative group $\NGm$]
This is $\NGm:= \Sp(k[g^{\pm 1}])$, where the cogroup structure on $k[g^{\pm 1}]$ is:
\[
\begin{aligned}
\Delta: &k[g^{\pm 1}] \to k \langle g_1^{\pm 1}, g_2^{\pm 1} \rangle\\
&g \longmapsto g_1 g_2
\end{aligned}
\qquad
\begin{aligned}
S: &k[g^{\pm 1}] \to k[g^{\pm 1}]\\
&g \longmapsto g^{-1}
\end{aligned}
\qquad
\begin{aligned}
\epsilon: &k[g^{\pm 1}] \to k \\
&g \longmapsto 1
\end{aligned}
\]
It is interesting to observe that every noncommutative affine scheme $X =\Sp(A)$ has a somewhat natural $\NGm$-action (the precise statement is that this gives a functor $\gamma: \NAff_k \to \NGm-\NAff_k$), described dually as the coaction:
\[
\begin{aligned}
\alpha: &A \to A\ast_k k[g] = A\langle g \rangle\\
&a \longmapsto g a g^{-1}
\end{aligned}
\]
The corresponding action of the affine group scheme $\Rep_n(\NGm) \cong \GL_n(k)$ on $\Rep_n(X)$ is the standard conjugation action on representations. The composition of the functor associating to each $X$ the natural $\NGm$-action and the representation functor of Proposition~\ref{prop:repgroup}(3) is a factorisation of the representation functor with target category the $\GL_n(k)$-equivariant affine schemes:
\[
\begin{tikzcd}
	\NAff_k \arrow[rr, bend left=25, "\Rep_n"] \arrow[r, "\gamma"] &\NGm-\NAff_k \arrow [r, "\Rep_n"] &\GL_n(k)-\Aff_k
\end{tikzcd}
\]
\end{Ex}

\begin{Remark} As anticipated in the beginning of this section, the explanation of the ordinary $\GL_n$-action in these new terms tells us what we should do in case we would like to consider different actions of $\GL_n$: we consider simply $\Rep_n : \NGm -\NAff_k \to \GL_n(k) -\Aff_k$, without precomposing it with the functor $\gamma$. This means that we start from a space $\Sp(A)$ with a $\NGm$-action that is not necessarily the usual one, and in this way obtain all possible actions of $\GL_n\acts \Rep_n(A)$ that arise from noncommutative geometry. 
\end{Remark}


\subsection{Noncommutative Poisson-group schemes}
\label{6.2}

First we give the definition of affine Poisson-group schemes, which is the obvious algebro-geometric notion analogous to the differential-geometric one of Poisson-Lie groups (\cite{CP}). First we recall that the category $\CPA_k$ of commutative Poisson algebras over $k$ has binary coproducts (the underlying algebra is the tensor product) and initial object the algebra $k$ with trivial Poisson structure. Dually the category of affine Poisson schemes $\PAff_k := \CPA_k^\op$ is a cartesian monoidal category with final object $\ast=\Spec(k)$ the point. Recall that we have two forgetful functors $\PAff_k \to \Aff_k$, $\Grp_k \to \Aff_k$: one forgets the Poisson structure and the other the group structure.

\begin{Defn} The category $\PGAff_k$ of \emph{(affine) Poisson-group schemes} (over $k$) is the full subcategory of the pullback\footnote{We mean the strict pullback in the $1$-category of categories $\Cat$. In other words an object in $\PAff_k \times_{\Aff_k} \Grp_k$ is a pair of a Poisson scheme and a group scheme which are \emph{equal} under the forgetful functors, which is just a fancy way of saying that we have an affine scheme equipped with a group structure as well as a Poisson structure.} category $\PAff_k \times_{\Aff_k} \Grp_k$ consisting of those objects whose multiplication map is a Poisson map.
\end{Defn}

\begin{Remark} We remark that this is \emph{not} the same as the category of group objects in Poisson schemes $\Grp(\PAff_k)$. In fact objects of this category are Poisson schemes equipped with a group structure for which multiplication, inverse, and unit are \emph{all} Poisson maps, while $\PGAff_k$ only requires the multiplication to be a Poisson map. Moreover, one can show that a Poisson-group schemes has inverse map being a antiPoisson-homomorphism, therefore every group object in Poisson schemes has trivial (zero) Poisson structure, in other words $\Grp(\PAff_k)\cong \Grp_k$ is trivially just the category of group schemes. 
\end{Remark}

Now we have everything we need to define the noncommutative analogues of such structures. First we recall that the category $\PPA_k$ of double Poisson algebras over $k$ has binary coproducts (Proposition~\ref{prop:cop}) and initial object the algebra $k$ with trivial double Poisson structure. Dually, the category of \emph{noncommutative affine Poisson schemes} $\NPAff_k := \PPA_k^\op$ is a cartesian monoidal category with final object $\ast = \Sp(k)$, the `point'. We have two forgetful functors $\NPAff_k \to \NAff_k$, $\NGrp_k \to \NAff_k$: one forgets the (double) Poisson structure and the other the group structure.

\begin{Defn} 
\label{defn:NPGA}
The category $\NPGAff_k$ of \emph{noncommutative (affine) Poisson-group schemes} (over $k$) is the full subcategory of the pullback category $\NPAff_k \times_{\Aff_k} \NGrp_k$ consisting of those objects whose multiplication map is a Poisson map.
\end{Defn}

\begin{Remark} [Dually, algebraic side]
A noncommutative (affine) Poisson-group scheme $X=\Sp(A)\in \NPGAff_k$ is, dually, an algebra $A$ equipped with a double Poisson structure and a cogroup structure for which the comultiplication map $\Delta: A \to A \ast_k A$ is a double Poisson map.
\end{Remark}

\begin{Remark}
\label{rem:Sum}
We have the following ingredients:
\begin{itemize}
\item[(i)] $\Rep_n: \NAff_k \to \Aff_k$, the ordinary representation functor.
\item[(ii)] $\Rep_n : \NPAff_k \to \PAff_k$, enriching (i) under the natural forgetful functors~\eqref{eq:enrich}.
\item[(iii)] $\Rep_n: \NGrp_k \to \Grp_k$, enriching (i) under the natural forgetful functors (Proposition~\ref{prop:repgroup}(1)).
\end{itemize}
\end{Remark}

It follows that we have an induced pullback functor:
\begin{equation}
\label{eq:SuperRep}
\Rep_n: \NPAff_k \times_{\Aff_k}  \NGrp_k \to \PAff_k \times_{\Aff_k}  \Grp_k
\end{equation}

\begin{Thm}
The functor~\eqref{eq:SuperRep} restricts to the full subcategories of noncommutative Poisson-group schemes and Poisson-group schemes, respectively. In other words we have a factorisation:
\begin{equation}
\label{eq:SupRep}
\begin{tikzcd}
	\NPGAff_k \arrow[d, hook] \arrow[r, "\Rep_n"]& \PGAff_k \arrow[d, hook] \\
	\NPAff_k \times_{\Aff_k}  \NGrp_k \arrow[r, "\Rep_n"] & \PAff_k \times_{\Aff_k}  \Grp_k
\end{tikzcd}
\end{equation}
justifying, once again, Definition~\eqref{defn:NPGA}.
\end{Thm}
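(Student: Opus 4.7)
The plan is to unwind the definitions and reduce the statement to two facts already established in the paper: functoriality of the representation functor on double Poisson morphisms (Theorem~\ref{thm:dpmorphisms}) and its cocontinuity (Theorem~\ref{thm:repham}). Concretely, let $X = \Sp(A) \in \NPGAff_k$, so that $A$ carries simultaneously a double Poisson bracket $\ldb -,-\rdb$ and a cogroup structure $(\Delta, S, \epsilon)$ with $\Delta : A \to A \ast_k A$ a morphism of double Poisson algebras (where $A \ast_k A$ is endowed with the free product double Poisson structure of Proposition~\ref{prop:free}). I want to show that $\Rep_n(X) \in \PAff_k \times_{\Aff_k} \Grp_k$ actually lands in $\PGAff_k$, i.e.\ that the comultiplication $\Delta_n : A_n \to A_n \otimes_k A_n$ encoding the group structure is a morphism of single Poisson algebras.

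First I would apply Theorem~\ref{thm:dpmorphisms} to $\Delta$, obtaining a morphism of double Poisson algebras $\Delta_n : A_n \to (A \ast_k A)_n$, which in particular (Lemma~\ref{lem:H0P} composed with the usual forgetful functor to single Poisson algebras on the commutative side) is a morphism of Poisson algebras. Next, by cocontinuity of $(-)_n : \DGA_k \to \CDGA_k$ (contained in Theorem~\ref{thm:repham}), there is a natural isomorphism of commutative algebras
\[
(A \ast_k A)_n \;\cong\; A_n \otimes_k A_n\, .
\]
Combining these two facts gives the desired map $\Delta_n : A_n \to A_n \otimes_k A_n$; it only remains to verify that the above cocontinuity isomorphism is an isomorphism of (single) Poisson algebras, where the right-hand side carries the tensor product Poisson structure.

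The key step, and essentially the only content of the proof, is therefore the following compatibility: on the coproduct $A \ast_k A$, Proposition~\ref{prop:free} forces $\ldb a, b \rdb = 0$ whenever $a$ lies in the first factor and $b$ in the second. Applying the explicit formula~\eqref{eq:doublePrep} for the induced double bracket on the representation algebra to generators $a_{rs}, b_{uv}$ of $(A \ast_k A)_n$ with $a,b$ in different factors yields $\ldb a_{rs}, b_{uv} \rdb = 0$ in $(A \ast_k A)_n^{\otimes 2}$, hence $\{a_{rs}, b_{uv}\} = 0$ after multiplying. This is precisely the single Poisson bracket on $A_n \otimes_k A_n$ given by $\{a\otimes 1, 1 \otimes b\} = 0$, so the cocontinuity isomorphism intertwines the two Poisson structures. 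Composing with $\Delta_n$ then gives a Poisson morphism $A_n \to A_n \otimes_k A_n$, which dualises to the statement that multiplication on $\Rep_n(X)$ is a Poisson map, placing $\Rep_n(X)$ in $\PGAff_k$.

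There is no real obstacle here: the argument is essentially formal, and the only ``calculation'' is the compatibility of the free product double Poisson structure with the tensor product single Poisson structure under $(-)_n$, which is visible on generators. The mild subtlety worth remarking on is that functoriality of $\Rep_n$ between double Poisson algebras (the upper enrichment in diagram~\eqref{eq:enrich}) must be combined with the \emph{single} Poisson viewpoint on the commutative side, since Poisson-group schemes are defined in terms of ordinary Poisson structures; the forgetful functor $\CDGPPA_k \to \CDGPA_k$ handles this bookkeeping transparently.
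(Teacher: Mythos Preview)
Your proof is correct and follows essentially the same route as the paper: reduce to showing that $(A \ast_k A)_n \cong A_n \otimes_k A_n$ holds as Poisson algebras (equivalently, that $\Rep_n$ is cartesian on the Poisson side), then verify this on generators using the vanishing of double brackets between the two factors in the free product. Your write-up spells out the generator check more explicitly than the paper does, but the argument is the same.
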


\begin{proof} 
Dually, we need to show that if $A$ is a double Poisson algebra with a cogroup structure such that the comultiplication map $\Delta : A \to A \ast_k A $ is a morphism of double Poisson algebras, then the induced comultiplication $\Delta_n: A_n \to (A\ast_k A)_n\cong A_n \otimes_k A_n$ is a morphism of Poisson algebras. This is granted once we show that the equality $(A\ast_k A )_n \cong A_n \otimes_k A_n$ holds not only at the level of algebras, but also at the level of double Poisson algebras, or in other words that the functor $\Rep_n : \NPAff_k \to \PAff_k$ (Remark~\ref{rem:Sum}(ii)) is a cartesian functor. This follows straightforwardly from the identification $(A\ast_k B)_n \cong A_n \otimes_k B_n$ and the trivial verification of the equality of the two Poisson structures on the generators.
\end{proof}


\appendix 
\section{Derived coproducts}
\label{app:A}

Let $\CC$ be a model category and $S \downarrow \CC$ the under category with respect to a fixed object $S$. The natural forgetful functor is right adjoint to the coproduct by $S$:
\[
\adjunct{\CC}{S \downarrow \CC}{S\amalg-}{U}\,.
\]
The model structure on $S \downarrow \CC$ is the one with cofibrations, weak equivalences, and fibrations the preimage of the corresponding classes under the forgetful functor (therefore making the pair $(S \amalg-, U)$ a Quillen pair). The initial object in $S \downarrow \CC$ is $S$ with identity map as structure map $S\to S$, while the final object is the final object in $\CC$, with structure map the unique map $S \to \ast$. Push-outs and pull-backs in $S \downarrow \CC$ are computed as in $\CC$ (with structure maps coming from the additional structure maps in the push-out/pull-back data). In particular the coproduct in $S \downarrow \CC$ is the push-out in $\CC$ of the diagram $ \bullet \leftarrow S \rightarrow \bullet$, and we denote this coproduct by the standard symbol $A \amalg_S B$. The coproduct in a model category (in this case $S \downarrow \CC$) is left adjoint to the diagonal functor:
\begin{equation}
\label{eq:copro}
\adjunct{(S\downarrow \CC)^{\times 2}}{S\downarrow \CC}{-\amalg_S -}{\Delta}\,,
\end{equation}
which obviously preserves all classes of maps (cofibrations, weak equivalences, fibrations). As a consequence the above pair is a Quillen pair and the coproduct has a total left derived functor, which we denote by $\L(-\amalg_S - ) = -\amalg_S^\L - $, and is a priori computed by picking cofibrant replacements for both variables. However, in the case of a left proper model category (weak equivalences are preserved by pushout along cofibrations) ordinary pushouts along cofibrations compute homotopy pushouts (\cite[Proposition A.2.4.4.(ii)]{Lu}), therefore the derived coproduct is computed by picking a cofibrant replacement of only one of the two variables:
\begin{equation}
\label{eq:1var}
A \amalg_S^\L B \cong A \amalg_S^\L QB\,,
\end{equation}
where $QB$ is a cofibrant replacement of $B$ in the under category $S \downarrow \CC$, or equivalently, a diagram $S \cofi QB \acfib B$ in $\CC$. Finally we remark that the categories we are interested in, such as dg algebras or commutative dg algebras over a field $\CC = \DGA_k , \CDGA_k$ (with coproduct being, respectively, the free product and the tensor product) are left proper model categories (see \cite[Remark 2.15]{BCL}).

\bibliographystyle{amsplain}
\bibliography{Poisson}

\end{document}